\newcommand{\CC}{\mathbb{C}}
\newcommand{\ZZ}{\mathbb{Z}}
\newcommand{\QQ}{\mathbb{Q}}
\newcommand{\cC}{\mathcal{C}}
\DeclareMathOperator{\Hom}{Hom}
\DeclareMathOperator{\ind}{ind}
\newtheorem{thm}{Theorem}[section]
\newtheorem{theorem}{Theorem}[section]  
\newtheorem{prop}[thm]{Proposition}
\newtheorem*{propos}{Proposition}
\newtheorem{cor}[thm]{Corollary}
\newtheorem{lem}[thm]{Lemma}
\newtheorem{conj}[thm]{Conjecture}
\theoremstyle{remark}
\newtheorem{remark}[thm]{Remark}
\newtheorem{ex}[thm]{Example}
\newtheorem{notation}[thm]{Notation}
\newtheorem{definition}[thm]{Definition}
\newtheorem{corollary}[thm]{Corollary}
\newcommand{\Amr}{A_{\underline{m};r}}
\newcommand{\bmr}{b_{\underline{m};r}}
\DeclareMathOperator{\sgn}{sgn}
\DeclareMathOperator{\SL}{SL}
\author[K.~Baur, E.~Faber, S.~Gratz, K.~Serhiyenko, G.~Todorov]{Karin Baur, Eleonore Faber, Sira Gratz, Khrystyna Serhiyenko, Gordana Todorov}
\address{School of Mathematics, University of Leeds, Leeds, LS2 9JT, UK. On leave from 
the University of Graz, Austria}
\email{pmtkb@leeds.ac.uk}
\address{
School of Mathematics, University of Leeds, Leeds, LS2 9EJ, UK
}
\email{e.m.faber@leeds.ac.uk}
\address{
School of Mathematics and Statistics, University of Glasgow, University Place, Glasgow, G12 8SQ, UK
}
\email{Sira.Gratz@glasgow.ac.uk}
\address{Department of Mathematics, University of California at Berkeley, Berkeley, CA 94720, USA}
\email{khrystyna.serhiyenko@berkeley.edu}
\address{Department of Mathematics, Northeastern University, Boston, MA 02115, USA}
\email{g.todorov@northeastern.edu}
\date{\today}
\thanks{ 
\noindent 
K.B. was supported by FWF grants P 30549-N26 and W1230. She 
is supported by a Royal Society Wolfson Research Merit Award.
E.F. is a Marie Sk{\l}odowska-Curie fellow at the University of Leeds 
(funded by the European Union's Horizon 2020 research and innovation programme under the 
Marie Sk{\l}odowska-Curie grant agreement No 789580).
K.S. was supported by NSF Postdoctoral Fellowship MSPRF - 1502881.
} 
\subjclass[2010]{05E10, 
13F60, 
16G20, 
18D99,  
14M15 
} 
\keywords{frieze pattern, mesh frieze, unitary frieze, cluster category, Grassmannian, Iyama--Yoshino reduction}
\title{Friezes satisfying higher SL$_k$-determinants}
\begin{document}

\baselineskip=16pt

\maketitle

\begin{abstract} 
In this article, we construct $\SL_k$-friezes using Pl\"ucker coordinates, 
making use of the cluster structure on the homogeneous coordinate ring of the
Grassmannian of $k$-spaces in $n$-space via the Pl\"ucker embedding. 
When this cluster algebra is of finite type, the
$\SL_k$-friezes are in bijection with the so-called mesh friezes of the 
corresponding Grassmannian cluster category. These are collections of positive integers on the AR-quiver of 
the category with relations inherited from the mesh relations on the category.
In these finite type cases, many of the $\SL_k$-friezes arise from specialising a cluster to 1. 
These are called unitary. 
We use Iyama-Yoshino reduction to analyse the non-unitary friezes. With this, 
we 
provide an explanation for all known friezes of this kind. 
An appendix by Cuntz and Plamondon proves that there are 868 
friezes of type $E_6$. 
\end{abstract}

\begin{small}
\setcounter{tocdepth}{1}
\tableofcontents
\end{small}

%
\section{Introduction} 

In this paper, we establish an explicit connection between 
SL$_k$-friezes and Grassmannian cluster categories and algebras.

Integral $\SL_k$-friezes are certain arrays of integers consisting of finitely many rows of infinite length, see Example~\ref{ex:SL2}.  Moreover, entries in an $\SL_k$ frieze satisfy the so-called diamond rule, where for every $k\times k$ diamond formed by the neighboring entries, the determinant of the corresponding matrix equals 1.  For example, when $k=2$ each diamond $\begin{smallmatrix}&a\\b&&c\\&d\end{smallmatrix}$ satisfies the relation $\big|{\begin{smallmatrix}b&a\\d&c\end{smallmatrix}}\big|=1$.   Moreover,  we consider tame $\SL_k$ frieze, that is friezes where the determinant of every $(k+1)\times(k+1)$ diamond is 0.   Such friezes have horizontal period $n$, where $n$ is determined by $k$ and the number of rows.

$\SL_k$-friezes were introduced in the seventies by Coxeter \cite{Coxeter} in the case $k=2$, while higher $\SL_k$-frieze patterns first appeared in work of Cordes-Roselle \cite{CordesRoselle}.   Conway and Coxeter further studied $\SL_2$-friezes in \cite{CoCo1, CoCo2}, where they showed that there exists a bijection between $\SL_2$ friezes and triangulations of polygons.  Interest in friezes and their various generalizations renewed after the introduction of cluster algebras in 2001, as cluster algebras coming from the Grassmannian of 2 planes in an $n$-dimensional space are also in bijection with triangulations of polygons.  Moreover, it was shown later that $\SL_2$ friezes can be obtained by specializing all cluster variables in a given cluster to one \cite{CalderoChapoton}.

In this way, $\SL_2$ friezes are well-understood and they are closely related to the combinatorics of cluster algebras.    On the other hand, the classification of integral $\SL_k$-friezes remains elusive.  Our paper makes a step in the direction of a complete classification.  We show that in the finite type cases, all integral $\SL_k$-friezes can be obtained from the combinatorics of the cluster algebras on coordinate rings of Grassmannians, and Grassmannian cluster categories.  We make use of the combinatorial tools that we call 
\emph{Pl\"ucker friezes}, which arise from the cluster algebras, and \emph{mesh friezes}, which arise from Grassmannian cluster categories.  

Pl\"ucker friezes play a crucial role in our construction of $\SL_k$-friezes.   Their entries are given by a constellation of Pl\"ucker coordinates in the homogeneous coordinate ring $\mathcal{A}(k,n)$ of the Grassmannian $\mathrm{Gr}(k,n)$ via the Pl\"ucker embedding.
We show that the (specialized) Pl\"ucker frieze of type $(k,n)$ deserves its name: it is indeed an $\SL_k$-frieze -- all its $k \times k$ diamonds have determinant 1 (cf.\ Theorem \ref{thm:frieze}).  In particular, each map from $\mathcal{A}(k,n)$ to $\mathbb{Z}$, given by a specialization of a cluster to one, yields a tame integral $\SL_k$-frieze (cf.\ Corollary \ref{cor:cluster-1-frieze}).   We call friezes obtained in this way \emph{unitary}.  This generalizes the results about friezes from the case 
$k=2$, which have been well-known and studied for some time: Indeed, all $\SL_2$-friezes arise from the (specialized) Pl\"ucker frieze of type $(2,n)$ in this way.  However,  for $k>2$ not all friezes are unitary.

Next, we use categorification of cluster algebras to obtain all $\SL_k$ friezes whenever $\mathcal{A}(k,n)$ is of finite type.  A mesh frieze is an integral frieze on the Auslander-Reiten quiver of a Grassmannian cluster category such that entries coming from a mesh satisfy a certain frieze-like relation.
In the case $k=2$, 
a mesh frieze of the cluster category exactly agrees with the $\SL_2$-frieze. Thus, in light of the connection that 
we found, it makes sense to consider relationship between $\SL_k$-friezes and 
mesh friezes of Grassmannian cluster categories for $k>2$.  In finite type we show that there exists a bijection between the two.  In particular, this implies that in finite type every $\SL_k$ frieze is obtained by specializing the collection of cluster variables in a given cluster to some set of positive integers.

This also allows us to use tools from representation theory, such as Iyama-Yoshino reduction, to study $\SL_k$-friezes. It enables us to pass from a mesh frieze of a given ``rank'' (that is, a mesh frieze for a cluster category whose rank is given by the number of indecomposables in a cluster tilting object), to one of lower rank. Making use of known restrictions for smaller rank cases (in particular, of type $A$ cases) we can then draw conclusions about the nature of our original mesh frieze. 
The natural bijection, in finite type cases, between mesh friezes and $\SL_k$-friezes means that Iyama-Yoshino reduction helps us better our understanding of integral $\SL_k$-friezes.
In particular, we obtain new results on the number of such friezes and their possible entries. 

An appendix by Cuntz and Plamondon determines the number of friezes in type $(3,7)$. 

\subsection*{Acknowledgements} We want to thank Hugh Thomas for comments on an earlier version of the paper. We also want to thank the two anonymous referees for their helpful comments and suggestions.

%
\section{Background and notation}

%
\subsection{$\SL_k$-friezes}

\ 

\begin{definition}\label{D:frieze}
Let $R$ be an integral domain. \\
An {\em $\SL_k$-frieze (over $R$)} is a 
grid in the plane, consisting of a finite number of rows:  
\[
\xymatrix@C=.08em@R=.06em{
 & 0 && 0 && 0 && 0 && 0 && 0 && 0  \\ 
\dots && \iddots&& \iddots &   & \iddots  && \iddots && \iddots && \iddots && \dots\\ 
& 0 && 0 && 0 && 0 && 0 && 0 && 0  \\
\dots & & 1 && 1 && 1 && 1 && 1 && 1 &&  \dots \\
 & \ast && \ast && \ast &&m_{0,w-1} && m_{1,w} &&  m_{2,w+1} && \ast \\
 &  &  &&  && && \iddots \\
 & \dots && \dots && m_{01} && m_{12} && m_{23} && m_{34} && \dots \\
 &  & \dots && m_{00} && m_{11} && m_{22} && m_{33} && \dots  && \dots\\
 & 1 && 1 && 1 && 1 && 1 && 1 && 1 && \\ 
\dots &  & 0 && 0 && 0 && 0 && 0 && 0 &&  \dots \\
& && \iddots&& \iddots &   & \iddots  && \iddots && \iddots &&  \\
\dots &  &0 & & 0 && 0 &&  0 && 0 && 0 &&  &&  \\
}
\]
The frieze is formed by $k-1$ rows of zeroes followed by a row of 1s from top and from 
bottom respectively 
and by $w\ge 1$ rows of elements $m_{ij}\in R$ in between, 
such that every $k\times k$-diamond of entries of the frieze has determinant 1, i.e.~,
whenever we consider a matrix having 
$k$ successive entries of a row of an $\SL_k$-frieze on its diagonals and all other entries above 
and below the diagonal accordingly, the determinant of this matrix is $1$. The integer $w$ is called the \emph{width} of the frieze.
\end{definition}

\begin{definition} \label{def:SLk-frieze}
Let $F$ be an $\SL_k$-frieze over $R$. \\
(1) $F$ is {\em tame} if all $(k+1)\times (k+1)$-diamonds in $F$ have determinant 0. In general, we define an $s \times s$-diamond of the frieze, where $1 \leq s \leq k+1$, as the matrix having $s$ successive entries of the frieze on its diagonals and all other entries above and below the diagonal accordingly.\\
(2) If all non-trivial entries $m_{ij}$ with $0 \leq i,j \leq w$ of $F$ are positive integers, we call $F$ an {\em integral} 
$\SL_k$-frieze. 
\end{definition}

\begin{remark}\label{rem:width-period}
One can prove that every tame $\SL_k$-frieze of width $w$ over a field $K$
has horizontal period $n=w+k+1$, which was first proven for certain $\SL_k$-friezes in 
\cite{CordesRoselle} and in general in \cite[Cor.~7.1.1]{mgost14}. 
\end{remark}

$\SL_2$-friezes were first studied by Coxeter and Conway--Coxeter in the early 1970s, see Example \ref{ex:SL2} below. Higher $\SL_k$-frieze patterns made their first appearance 1972 in work of Cordes--Roselle \cite{CordesRoselle} (with extra conditions on the minors of the first  $k \times k$-diamonds) and seem only to have re-emerged with the introduction of cluster algebras: as $2$-friezes \cite{Propp}, $\SL_k$-tilings in \cite{BergeronReutenauer} and more systematically in \cite{MGOT12, mgost14, mg12, Cuntz}. See also \cite{Crossroads} for a survey on different types of frieze patterns.

\begin{ex}\label{ex:SL2}
An $\SL_2$-frieze is simply called a {\em frieze pattern}. 
Frieze patterns have first been studied by Coxeter, \cite{Coxeter} 
and by Conway and Coxeter in \cite{CoCo1, CoCo2}. They showed that 
integral frieze patterns of width $w$ are in bijection with triangulations of 
$w+3$-gons. Their horizontal period is $w+3$ or a divisor of $w+3$. The following 
example 
\[
\includegraphics[width=12cm]{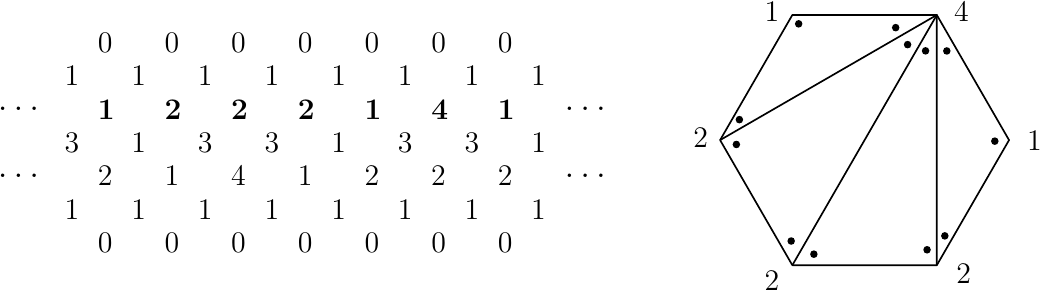}
\]
arises from a fan triangulation of a hexagon: 
The first non-trivial row of the frieze pattern (in bold face) is given by the 
number of triangles of the triangulation meeting at the vertices of the hexagon
(counting these while going around the hexagon). 
\end{ex}

%
\subsection{Pl\"ucker relations}\label{sec:pluecker-new}
 
\

Throughout, we fix $k,n \in \ZZ_{>0}$ with $2 \leq k \leq \frac{n}{2}$ and consider the 
Grassmannian $\mathrm{Gr}(k,n)$ as a projective variety via the Pl\"ucker embedding, 
with homogeneous coordinate ring 
\[
\mathcal{A}(k,n) = \CC[\mathrm{Gr}(k,n)].
\] 

This coordinate ring can be equipped with the structure of a cluster algebra as we 
briefly recall here. For details, we refer to~\cite{Scott06}. 

We write $[1,n] = \{1,2, \ldots, n\}$ for the closed interval of integers between $1$ and $n$. 
A $k$-subset of $[1,n]$ is a $k$-element subset of $[1,n]$.  
Every $k$-subset $I=\{i_1,\dots, i_k\}$ with $i_1<\dots< i_k$ of $[1,n]$ gives rise to 
a {\em Pl\"ucker coordinate}  
$p_{i_1,i_2,\dots, i_k}$.
We extend this definition to allow different ordering on the indices and repetition of 
indices: for an arbitrary $k$-tuple $I=(i_1,\dots, i_k)$ 
we set $p_{i_1,\dots, i_k}=0$ if there exists $\ell\ne m$ such that 
$i_{\ell}=i_m$ and 
$p_{i_1,\dots, i_k}=\sgn(\pi)p_{j_1,\dots,j_k}$ 
if $(i_1,\dots,i_k)=(j_1,\dots, j_k)$, with $j_1<j_2<\dots<j_k$ and 
$\pi$ is the permutation sending 
$j_m$ to $i_m$ for $1\le m\le k$.  
Sometimes, by abuse of notation, we will also call any tuple $(i_1, \ldots, i_k)$ a $k$-subset.
Scott proved in~\cite{Scott06} that $\mathcal{A}(k,n)$ is a cluster algebra, where all 
the Pl\"ucker coordinates are cluster variables.  For each $(k,n)$, there exists a cluster consisting of 
Pl\"ucker coordinates and the exchange relations arise from the Pl\"ucker relations (see below). 

Note that throughout the article 
we will consider sets $I$ up to reduction modulo $n$, where we choose the representatives 
$1, \ldots, n$ for the equivalence classes modulo $n$. 
If $I$ consists of $k$ consecutive elements, up to reducing modulo $n$, we call 
$I$ a \emph{consecutive $k$-subset} and 
$p_I$ a {\em consecutive Pl\"ucker coordinate}. These are frozen cluster variables or 
coefficients, 
and we will later set all of them equal to $1$. 
Of importance for our purposes are the Pl\"ucker coordinates which arise from $k$-subsets 
of the form 
$I=I_0\cup \{m\}$ where $I_0$ is a consecutive $(k-1)$-subset of $[1,n]$ and $m$ is any entry 
of $[1,n]\setminus I_0$. We call such $k$-subsets {\em almost consecutive}. 
Note that each consecutive $k$-subset is almost consecutive. In particular the 
frozen cluster variables are consecutive and 
thus 
almost consecutive.

We follow the exposition of~\cite[Section 9.2]{Marsh13} for the presentation of the 
Pl\"ucker relations. 
In the cluster algebra $\mathcal A(k,n)$, the relations 
\begin{eqnarray}\label{eq:plucker-new}
\sum_{\ell=0}^k(-1)^l p_{i_1,\dots, i_{k-1},j_{\ell}} p_{j_0,\dots, \widehat{j_{\ell}},\dots, j_k}=0 
\end{eqnarray}
hold for arbitrary 
$1\le i_1<i_2<\dots <i_{k-1}\le n$ and $1\le j_0<j_1<\dots <j_k\le n$, where $\,\widehat{\cdot}\,$ signifies omission. These relations 
are called the {\em Pl\"ucker relations}. 
For $k=2$, the non-trivial Pl\"ucker relations are of the form 
\begin{eqnarray}\label{eq:plucker-k-2}
p_{a,c}p_{b,d} = p_{a,b}p_{c,d} + p_{a,d}p_{b,c} 
\end{eqnarray}
for arbitrary $1\le a<b<c<d\le n$. 
They are often called the three-term Pl\"ucker relations. 

Obviously, the order of the elements in a tuple plays a significant role. We will often need 
to use ordered tuples, or partially ordered tuples. 
For this, we introduce the following notation. For a tuple $J=\{i_1,\dots, i_{\ell}\}$ 
of $[1,n]$ and $\{a_1, \ldots, a_\ell\} = \{i_1, \ldots, i_\ell\}$ with $1\le a_1\le \dots\le a_{\ell}\le n$ 
we set 
\[
o(J)=o(i_1,\dots, i_{\ell}) = (a_1,\dots, a_{\ell}).
\]
With this notation, if $0\le s,\ell\le k$, we will write $p_{b_1,\dots, b_s,o(i_1,\dots, i_{\ell}),  
c_1,\dots, c_{k-\ell+a}}$ for \newline $p_{b_1,\dots, b_s,a_1,\dots, a_{\ell}, 
c_1,\dots, c_{k-\ell+a}}$. 
For arbitrary $I=\{i_1,\dots, i_{k-1}\}$ and $J=\{j_0,\dots, j_k\}$, the Pl\"ucker 
relations (\ref{eq:plucker-new}) then take the form 
 \begin{eqnarray*}
	\sum_{l=0}^k (-1)^{\ell} p_{o(I) j_{\ell}}\cdot p_{o(J\setminus j_{\ell})} =0.
\end{eqnarray*}

%
\subsection{Pl\"ucker friezes, and special Pl\"ucker friezes} 

\ 

To the homogeneous coordinate ring $\mathcal{A}(k,n)$ we now associate a certain frieze pattern 
which we will use later to obtain $\SL_k$-friezes. 
It uses the almost consecutive Pl\"ucker coordinates. For brevity, it is convenient to write 
$[r]^{\ell}$ for the set $\{r,r+1,\dots, r+\ell-1\}$. For example, if $k=4$ and $n=9$, 
$p_{[1]^{3},6}$ is the short notation for $p_{1236}$ and $p_{o([8]^3,4)}$ is short for 
$p_{1489}$. 

\begin{definition}\label{D:Plucker frieze}
	The {\em Pl\"ucker frieze} of type $(k,n)$ is a $\ZZ \times \{1, 2, \ldots, n+k-1\}$-grid with entries given by the map
	\begin{eqnarray*}
		\varphi_{(k,n)} \colon \ZZ \times \{1, 2, \ldots, n+k-1\} & \to & \mathcal{A}(k,n)\\
		(r,m) & \mapsto & p_{o([r']^{k-1},m')},
	\end{eqnarray*}
	where $r'\in[1,n]$ is the reduction of $r$ modulo $n$	and $m'\in[1,n]$ is the 
	reduction of $m+r'-1$ modulo $n$.  
We denote the Pl\"ucker frieze by $\mathcal P_{(k,n)}$. 
\end{definition}

Example~\ref{ex:case2-5} and Figure~\ref{fig:frieze with plucker coordinates} 
illustrate the map $\varphi_{(k,n)}$. 

\begin{ex}\label{ex:case2-5}
We draw $\mathcal P_{(2,5)}$ as grid of Pl\"ucker coordinates 
with the positions in $\ZZ\times \{1,2,\dots, 6\}$ written in grey above them: 
\[
\xymatrix@C=-8pt@R=-1pt{
 && && && && && && && && \\
{\textcolor{blue}{_{m=6}}}\ar@{..>}[rrrrrrrrrrrrr]  &&  \textcolor{gray}{(\text{-}2,6)} 
 &&  \textcolor{gray}{(\text{-}1,6)} &&  \textcolor{gray}{(0,6)} && 
  \textcolor{gray}{(1,6)} &&  \textcolor{gray}{(2,6)} &&  \textcolor{gray}{(3,6)} & \\ 
 &&  p_{33} && p_{44} && p_{55} && p_{11} && p_{22} && p_{33}  \\
  & \textcolor{gray}{(\text{-}2,5)} &&  \textcolor{gray}{(\text{-}1,5)} &&  \textcolor{gray}{(0,5)} 
   &&  \textcolor{gray}{(1,5)} &&  \textcolor{gray}{(2,5)} &&  \textcolor{gray}{(3,5)} 
    &&  \textcolor{gray}{(4,5)} \\ 
   & p_{23} && p_{34} && p_{45} && p_{15} && p_{12} && p_{23} && p_{34} \\
\dots &&  \textcolor{gray}{(\text{-}1,4)} &&  \textcolor{gray}{(0,4)} &&  \textcolor{gray}{(1,4)} 
  &&  \textcolor{gray}{(2,4)} &&  \textcolor{gray}{(3,4)} &&  \textcolor{gray}{(4,4)} && \dots \\ 
 && p_{24} && p_{35} && p_{14} && p_{25} && p_{13} && p_{24}   \\
&   \textcolor{gray}{(\text{-}1,3)} &&  \textcolor{gray}{(0,3)} &&  \textcolor{gray}{(1,3)} && 
  \textcolor{gray}{(2,3)} &&  \textcolor{gray}{(3,3)} &&  \textcolor{gray}{(4,3)} && 
   \textcolor{gray}{(5,3)} \\ 
 & p_{14} && p_{25} && p_{13} && p_{24} && p_{35} && p_{14} && p_{25} \\
{\textcolor{blue}{_{m=2}}}\ar@{..>}[rrrrrrrrrrrrr] &&  \textcolor{gray}{(0,2)} && 
 \textcolor{gray}{(1,2)} &&  \textcolor{gray}{(2,2)} && 
  \textcolor{gray}{(3,2)} &&  \textcolor{gray}{(4,2)} &&  \textcolor{gray}{(5,2)} &\\ 
 && p_{15} && p_{12} && p_{23} && p_{34} && p_{45} && p_{15} \\
&  \textcolor{gray}{(0,1)} &&  \textcolor{gray}{(1,1)}  &&  \textcolor{gray}{(2,1)} &&  
\textcolor{gray}{(3,1)} &&  \textcolor{gray}{(4,1)} &&  \textcolor{gray}{(5,1)} && 
  \textcolor{gray}{(6,1)}  \\
& p_{55} && p_{11} && p_{22} && p_{33} && p_{44} && p_{55} && p_{11} \\
&& \textcolor{red}{_{r=1}}\ar@{..>}[ruuruuruuruuruuruuu]
}
\]
\end{ex}

\begin{remark}
Notice that if $m' \notin \{1, 2, \ldots, k-1\}$ 
then the 
image $\varphi_{(k,n)}(r,m)$ corresponds to a Pl\"ucker coordinate 
$p_{o([r']^{k-1},m')}\ne 0$ and for 
$m' \in \{1, 2, \ldots, k-1\}$
, the image is 0. 
In particular, the top and bottom $k-1$ rows of $\mathcal P_{(k,n)}$ 
consist of solely $0$ entries.
\end{remark}

\begin{landscape}
\begin{figure}
\begin{tikzpicture}[scale = 1.12]
\tikzstyle{every node}=[font=\small]

\foreach \x in {1}
    \foreach \y in {1,...,2} 
      	\pgfmathsetmacro\result{\x +\y -1}
       \node[blue] (\x,\y) at (\x+\y+\y+1,\x + 1) {$p_{o([\y]^{k-1},\pgfmathprintnumber{\result} )}$};
       
       \node[blue] (k-1,1) at (4+2,4) {$p_{o( [1]^{k-1}, k-1)}$};
       \node[blue] (k,2) at (4+4,4) {$p_{o([2]^{k-1}, k )}$};
       
       \node[red] (k,1) at (5+2,5) {$p_{o( [1]^{k-1}, k )}$};
       \node[red] (k,2) at (5+4,5) {$p_{o( [2]^{k-1}, k+1 )}$};
       
       \node (k+1,1) at (6+2,6) {$p_{o( [1]^{k-1}, k+1 )}$};
       \node (k+2,2) at (6+4,6) {$p_{o( [2]^{k-1}, k+2 )}$};
       
       \node (n-1, 1) at (8+2,8) {$p_{o( [1]^{k-1}, n-1 )}$};
       \node (n, 2) at (8+4,8) {$p_{o( [2]^{k-1}, n )}$};
       
       \node[red] (n, 1) at (9+2,9) {$p_{o( [1]^{k-1}, n )}$};
       \node[red] (n+1, 2) at (9+4,9) {$p_{o( [2]^{k-1}, 1 )}$};
       
       \node[blue] (n+1, 1) at (10+2,10) {$p_{o( [1]^{k-1}, 1 )}$};
       \node[blue] (n+2, 2) at (10+4,10) {$p_{o( [2]^{k-1}, 2 )}$};

       \node[blue] (n-1, 1) at (12+2,12) {$p_{o( [1]^{k-1}, k-1 )}$};
       \node[blue] (n, 2) at (12+4,12) {$p_{o( [2]^{k-1}, k )}$};
 
       \foreach \x in {2,4}
    \foreach \y in {3,5} 
       \node[blue] (\x,\y) at (\x+\y+\y,\x) {$\ldots$};

        \foreach \x in {3,11}
    \foreach \y in {1,...,5}
       \node[blue] (\x,\y) at (\x+\y+\y,\x) {$\ldots$};
       
        \foreach \x in {7}
    \foreach \y in {1,...,5}
       \node (\x,\y) at (\x+\y+\y,\x) {$\ldots$};
       
        \foreach \x in {5,9}
    \foreach \y in {3,5} 
       \node[red] (\x,\y) at (\x+\y+\y,\x) {$\ldots$};
       
       \foreach \x in {6,8}
    \foreach \y in {3,5} 
       \node (\x,\y) at (\x+\y+\y,\x) {$\ldots$};
       

    \node[blue] (r,1) at (8 + 2, 2) {$p_{o([r]^{k-1},r )}$}; 
      \node[blue] (k-1,r) at (2+10,4) {$p_{o( [r]^{k-1}, r+k-2 )}$};
      \node[red] (k,r) at (3+10,5) {$p_{o( [r]^{k-1}, r+k-1 )}$};
      \node (k+1,r) at (4+10,6) {$p_{o( [r]^{k-1}, r+k )}$};
      
      \node (n-1, r) at (8+8,8) {$p_{o( [r]^{k-1}, r+n-2 )}$};
      
      \node[red] (n, r) at (9+8,9) {$p_{o( [r]^{k-1}, r+n-1 )}$};
      
      \node[blue] (n+1, r) at (10+8,10) {$p_{o( [r]^{k-1}, r+n )}$};
      
      \node[blue] (top, r) at (12+8,12) {$p_{o( [r]^{k-1}, r+k-2 )}$};
      
      \node (text) at (7,9) {\large{$\ddots$}};
      
      \node (text) at (16,4) {\large{$\ddots$}};
       
\end{tikzpicture}
\caption{The grid defined by $\varphi_{(k,n)}$, reducing indices modulo $n$. 
 The top  and bottom $k-1$ rows (blue) are $0$, the $k$-th row from 
 top/bottom (red) consist of consecutive $k$-tuples, i.e.\ frozen Pl\"ucker coordinates.}
 \label{fig:frieze with plucker coordinates}
\end{figure}
\end{landscape}

\begin{remark}[Specializing coefficients]\label{rem:freeze-coeff}
In what follows, we will replace all the frozen variables by $1$. For that, let 
$J$ be the ideal of $\mathcal A(k,n)$ generated by 
\[
\{x-1\mid x \mbox{ is a consecutive Pl\"ucker coordinate}\}.
\]
The consecutive Pl\"ucker coordinates are precisely the frozen variables in the cluster algebra 
$\mathcal A(k,n)$. The quotient $\mathcal  A(k,n)/J$ is a coefficient-free cluster algebra, 
and therefore -- as a subring of a field of rational functions over $\mathbb{C}$ -- an integral domain.
Let $s: \mathcal A(k,n) \to \mathcal A(k,n)/J$ be the map induced from the 
identity on non-consecutive Pl\"ucker coordinates and from replacing the consecutive 
Pl\"ucker coordinates by $1$. 
This map is an algebra homomorphism $\mathcal A(k,n)\to \mathcal A(k,n)/J$. 
We write $s\mathcal A(k,n)$ for $\mathcal A(k,n)/J$. \\
\end{remark}

\begin{definition}\label{D:Pfb=1}
	The {\em special Pl\"ucker frieze of type $(k,n)$}, denoted by $s\mathcal P_{(k,n)}$, 
	is the grid we get from $\mathcal P_{(k,n)}$ 
	by substituting the consecutive Pl\"ucker coordinates with $1$s. 
	In other words, $s\mathcal P_{(k,n)}$ is the grid with entries given by the map 
	$s \circ \varphi_{(k,n)}$ with entries in $s\mathcal A(k,n)$. 
\end{definition}

\begin{ex}\label{ex:special-2-5}
The special Pl\"ucker frieze of type $(2,5)$, $s\mathcal P_{(2,5)}$, (see Example~\ref{ex:case2-5}) is as follows 
(writing $0$ for the entries $p_{ii}$): 
\[
\xymatrix@C=-8pt@R=-1pt{
&0 && 0 && 0 && 0 && 0 && 0 && 0 && 0\\
  && 1 && 1 && 1 && 1 && 1 && 1 && 1 \\
&p_{13} && p_{24} && p_{35} && p_{14} && p_{25} && p_{13} && p_{24} && \cdots  \\
\cdots && p_{14} && p_{25} && p_{13} && p_{24} && p_{35} && p_{14} && p_{25} \\
& 1  && 1 && 1 && 1 && 1 && 1 && 1 && 1 \\
 && 0 && 0 && 0 && 0 && 0 && 0 && 0
}
\]
And $s\mathcal P_{(3,6)}$: 
\[
\xymatrix@C=-8pt@R=-1pt{
 && 0 && 0 && 0 && 0 && 0 && 0 && 0 \\
&0 && 0 && 0 && 0 && 0 && 0 && 0 && 0\\
  && 1 && 1 && 1 && 1 && 1 && 1 && 1 \\
&p_{146} && p_{125} && p_{236} && p_{134} && p_{245} && p_{356} && p_{146} && \cdots  \\
\cdots && p_{124} && p_{235} && p_{346} && p_{145} && p_{256} && p_{136} && p_{124} \\
& 1  && 1 && 1 && 1 && 1 && 1 && 1 && 1 \\
 && 0 && 0 && 0 && 0 && 0 && 0 && 0 \\
&0 && 0 && 0 && 0 && 0 && 0 && 0 && 0
}
\]
\end{ex}

\begin{remark}
For $k=3$, \cite[Proposition~3.3]{MGOT12} describes an explicit correspondence between $\SL_3$-friezes and 2-friezes 
(for a definition of 2-friezes see Appendix B). It is stated in~\cite[Remark 5.8]{MGOT12} that it can be checked that all the 
$n(n-4)$ entries in a 2-frieze are Pl\"ucker coordinates for the Grassmannian $\mathrm{Gr}(3,n)$. Performing this check and combining the two observations yields an alternative way to obtain the Pl\"ucker frieze of type $(k,n)$ for the special case $k=3$.
More generally, in \cite[Remark~3.19]{Crossroads} it is observed that, for $k \geq 3$, cluster variables for $\mathcal{A}(k,n)$ appear as entries in the derived arrays of an $\SL_k$-frieze.
\end{remark}

%
\section{Integral tame $\SL_k$-friezes from Pl\"ucker friezes}\label{sec:determinant}

%
\subsection{The special Pl\"ucker frieze $s\mathcal P_{(k,n)}$ is a tame $\SL_k$-frieze}

\ 

In this section, we prove the following result. 

\begin{thm} \label{thm:frieze}
	The frieze $s\mathcal P_{(k,n)}$ is a tame $\SL_k$-frieze over $s\mathcal{A}(k,n)$.
\end{thm}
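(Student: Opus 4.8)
The plan is to verify the two defining conditions separately: (i) every $k\times k$ diamond of $s\mathcal P_{(k,n)}$ has determinant $1$, and (ii) every $(k+1)\times(k+1)$ diamond has determinant $0$. The key is to identify, for an arbitrary diamond in the grid, exactly which Pl\"ucker coordinates occupy its entries, and then express the determinant in terms of these coordinates. By Definition~\ref{D:Plucker frieze}, the entries along a ``$\backslash$''-diagonal through position $(r,m)$ have the first $k-1$ indices $[r']^{k-1}$ fixed and only the last index $m'=m+r'-1$ moving, while entries along a ``$/$''-diagonal have the last index fixed and the window $[r']^{k-1}$ sliding. So a generic $s\times s$ diamond is built from coordinates $p_{o([r+t]^{k-1},\, c+t)}$ for $t$ in a suitable range. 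The first step is to write down this combinatorial bookkeeping carefully, reducing modulo $n$ and using the $o(\cdot)$ notation, so that the entry in row $i$, column $j$ of the $s\times s$ matrix is a definite Pl\"ucker coordinate.

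Next I would prove (i). The natural tool is a Pl\"ucker-coordinate identity expressing the $k\times k$ minor of a ``staircase'' of Pl\"ucker coordinates in terms of a single Pl\"ucker coordinate (or a product with consecutive ones). Concretely, I expect that the $k\times k$ diamond determinant equals $\pm p_{o([r]^{k-1}\cup\{a\})}\cdot(\text{product of consecutive coordinates})/(\text{similar product})$, which after the specialisation $s$ collapses to $\pm$ a single non-consecutive coordinate times $1$'s — and then one checks the sign and that it is in fact $+1$ by evaluating on one boundary diamond (e.g.\ one touching the row of $1$'s, where all entries are $1$ and the determinant is manifestly $1$, or one of the $\ast$-diamonds that reduces to a lower triangular matrix). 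The cleanest route is probably induction on $k$ via Laplace/cofactor expansion along the first column or row, peeling off one consecutive index at a time and reducing to the analogous statement for $(k-1)\times(k-1)$ diamonds, with the base case $k=1$ (or $k=2$, the classical three-term relation~\eqref{eq:plucker-k-2}) being immediate. Alternatively, one recognises the matrix as a minor of the $k\times n$ matrix of Pl\"ucker coordinates whose columns are the standard basis vectors at the consecutive positions, making the determinant literally a Pl\"ucker coordinate by the definition of the embedding.

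For (ii), tameness, I would use the same bookkeeping: a $(k+1)\times(k+1)$ diamond involves $k+1$ consecutive-window indices in its rows but the underlying ambient space has the Pl\"ucker relations forcing any $(k+1)$ of the relevant vectors in $\CC^n$ — here we only have, effectively, $k$ ``moving'' directions — to be linearly dependent. More precisely, expanding the $(k+1)\times(k+1)$ determinant by the Pl\"ucker relation~\eqref{eq:plucker-new} applied to the indices appearing, one sees it is a signed sum which is identically a Pl\"ucker relation, hence $0$ in $\mathcal A(k,n)$ and therefore also $0$ after applying the homomorphism $s$. I would phrase this as: the $(k+1)\times(k+1)$ diamond is, up to sign and up to multiplication by consecutive coordinates, exactly the left-hand side of a three-term or long Pl\"ucker relation, which vanishes.

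The main obstacle I anticipate is purely organisational rather than deep: getting the index reductions modulo $n$, the ordering operator $o(\cdot)$, and the signs $\sgn(\pi)$ to line up correctly so that the determinant genuinely comes out to $+1$ and not $-1$ or some other unit, uniformly across all diamonds including the ``wrap-around'' ones and the degenerate ones near the bands of $0$'s and $1$'s. A secondary subtlety is justifying that the identity proved in $\mathcal A(k,n)$ survives the specialisation $s$ (this is immediate since $s$ is an algebra homomorphism, but one must be careful that no cancellation of a consecutive coordinate was used that becomes $0/0$ after specialising — using that $J$ is prime and $s$ is a well-defined homomorphism handles this). I would therefore structure the write-up as: a lemma pinning down the entries of an arbitrary diamond; a lemma giving the generic $k\times k$ and $(k+1)\times(k+1)$ diamond determinants as Pl\"ucker expressions in $\mathcal A(k,n)$; and then a short deduction applying $s$ and checking the sign on a single explicit diamond.
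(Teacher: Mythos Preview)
Your proposal is correct and follows essentially the same route as the paper: compute the $k\times k$ and $(k+1)\times(k+1)$ diamond determinants as Pl\"ucker expressions in $\mathcal A(k,n)$, then apply the homomorphism $s$. The paper packages the inductive determinant computation into a single key lemma (Proposition~\ref{P:determinant}), proved by Laplace expansion along the last row together with a Pl\"ucker relation, and then the tameness argument is exactly your idea of Laplace-expanding the $(k+1)\times(k+1)$ determinant and recognising the resulting alternating sum as a Pl\"ucker relation.

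One point worth flagging: the paper states and proves its determinant formula not just for genuine diamonds (consecutive column indices $[m]^k$) but for matrices $A_{\underline m;r}$ with an \emph{arbitrary} cyclically ordered tuple $\underline m=(m_1,\dots,m_s)$ satisfying a mild positioning condition. This generality is not cosmetic: in the tameness step, the $k\times k$ minors obtained by deleting one column from the $(k+1)\times(k+1)$ diamond have column tuples $(m,\dots,\widehat{m+l},\dots,m+k)$, which are not consecutive, so they are not themselves diamonds of the frieze. Your plan as written speaks only of ``reducing to the analogous statement for $(k-1)\times(k-1)$ diamonds'', which would not quite cover these minors; you would need the more general statement anyway. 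The paper's organisational choice --- prove the determinant formula once, in the needed generality, then read off both (i) and (ii) --- is exactly the structure you should aim for, and your anticipated ``main obstacle'' (the sign and modular-index bookkeeping) is indeed where all the work goes, handled in the paper by the case analysis in Appendix~\ref{Appendix:ProofDet}.
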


Before we provide a proof for Theorem \ref{thm:frieze}, we make some observations on notation and set-up.

\begin{notation} 
	Throughout we calculate modulo $n$. More precisely, we will always reduce integers 
	modulo $n$, and identify integers with their representatives in $[1,n]$. 
	In addition, the following notation will be useful: For any $a, b \in [1,n]$ we denote by 
	$[a,b]$ the closed interval between $a$ and $b$ modulo $n$, defined as
 \[
   [a,b] = \begin{cases}
   				\{p \in [1,n] \mid a \leq p \leq b\} & \text{if} \; a \leq b\\
   				 \{p \in [1,n] \mid a \leq p \leq n \; \text{or} \; 1 \leq p \leq b\} & \text{if} \; b < a,
			\end{cases}
  \]
 and by $(a,b)$ the open interval between $a$ and $b$ modulo $n$, defined as 
  \[
   (a,b) = \begin{cases}
   				\{p \in [1,n] \mid a < p <b\} & \text{if} \; a \leq b\\
   				 \{p \in [1,n] \mid a < p \leq n \; \text{or} \; 1 \leq p < b\} & \text{if} \; b<a.\\
			\end{cases}
  \]
  Analogously, we define the half-open intervals $[a,b)$ and $(a,b]$. 
\end{notation}

Recall that we obtain the frieze $s\mathcal{P}_{(k,n)}$ from the frieze $\mathcal{P}_{(k,n)}$ 
by specialising the consecutive Pl\"ucker coordinates to $1$. We will compute the determinants 
of a diamond in the frieze $s\mathcal{P}_{(k,n)}$ via the corresponding diamond in $\mathcal{P}_{(k,n)}$. 
The $k \times k$ diamonds in the Pl\"ucker frieze $\mathcal P_{(k,n)}$ 
are matrices of the form
\[
	A_{[m]^k;r} := 
	(p_{o([r+i-1]^{k-1}, m+j-1)})_{1 \leq i,j \leq k}, 
\]
with 
\[
	 r \in [1,n] \; \text{and} \; m \in [r+k-1,r+n-1] = [r+k-1, r-1],
\]
where as before we calculate modulo $n$, with representatives $1, \ldots, n$. 

To inductively compute the determinants of the matrices $A_{[m]^k;r}$, we need to consider other matrices of the following form: 
For $r \in [1,n]$, $1 \leq s \leq n$ 
and $\underline{m}=(m_1, \ldots, m_s)$ with $m_i \in [1,n]$ 
we define 
\[ 
\Amr = (a_{ij})_{1 \leq i,j \leq s} 
\] 
to be the 
$(s \times s)$-matrix with entries 
$a_{ij} = p_{o(  [r+i-1]^{k-1}, m_j)}$ for $1 \leq i,j \leq s$:
  \begin{equation}\label{eq:matrix-def}
  \Amr := \begin{bmatrix*}[l]
  		
				p_{o([r]^{k-1}, m_1)} 		& p_{o([r]^{k-1}, m_2)}	
					& \ldots 	& p_{o([r]^{k-1}, m_s)}\\
				p_{o([r+1]^{k-1}, m_1)} 	& p_{o([r+1]^{k-1}, m_2)}	
				& \ldots 	& p_{o([r+1]^{k-1}, m_s)}\\
				\vdots 			& \vdots 			& \vdots 	& \vdots \\
				p_{o([r+s-1]^{k-1}, m_1)} 	& p_{o([r+s-1]^{k-1}, m_2)} 
				& \ldots 	& p_{o([r+s-1]^{k-1}, m_s)}
			
  \end{bmatrix*}.
\end{equation}
 
\begin{ex}
As an example, let $k=3$ and $n = 6$ and consider the matrix $\Amr$ in $\mathcal{P}_{(3,6)}$ 
for $r = 1$, $s=3$ and $\underline{m} = (3,4,5)$. It is given by
\begin{equation*}\label{eq:matrix-def}
 A_{(3,4,5);1} := \begin{bmatrix*}[l]
  		
				p_{123} 		& p_{124}	 	&p_{125}\\
				p_{233} 		& p_{234}		&p_{235}\\
				p_{334} 		& p_{344}		& p_{345}
			
  \end{bmatrix*}
 = \begin{bmatrix*}[l]
  		
				p_{123} 		& p_{124}	 	&p_{125}\\
				0 		& p_{234}		&p_{235}\\
				0		& 0		& p_{345}
			
 \end{bmatrix*}
\end{equation*}
\end{ex}

We set  
\[
  \bmr := \det  \Amr.
  \]
In particular, for $\underline{m} = [m]^k = (m, m+1, \ldots, m+k-1)$ the matrix $\Amr$ is a $k \times k$ diamond in the frieze $\mathcal{P}_{(k,n)}$ whose determinant it is our first goal to compute. 

We can compute the determinants $\bmr$ provided the two conditions (c1) and (c2) are 
satisfied for the tuple $\underline{m}$: 
\begin{notation}
Fix $r \in [1,n]$ and let $1 \leq s \leq k$. Choose elements $m_j\in[1,n]$ for 
each $1\le j\le s$. 
We can impose the following conditions on the ordered tuple $(m_1,\dots, m_s)$ 
\begin{itemize}
\item[(c1)]{It is ordered cyclically modulo $n$, 
that is there exists a number $b \in \{1, \ldots, s\}$ such that
 \[
 	m_b < m_{b+1} < \ldots < m_s < m_1 < m_2 < \ldots < m_{b-1},
 \]
 if $b \in \{2, \ldots, s\}$ or
 \[
 	m_1 < m_2 < \ldots < m_s
 \]
 if $b=1$.
 }
\item[(c2)]{We have $r+k-2 \notin [m_1,m_s)$.}
\end{itemize}
Conditions (c1) and (c2) are technical conditions needed to ensure that we get the 
correct signs in our computations (cf.\ the proof of Proposition \ref{P:determinant}).
\end{notation}

We now provide a formula for computing the determinants of the matrices $\Amr$ with entries in $\mathcal{A}(k,n)$.
 
 \begin{prop}\label{P:determinant}
 Let $r \in [1,n]$ and let $1 \leq s \leq k$. Let $\underline{m}=(m_1,\dots, m_s)$ 
 with $m_i \in [1,n]$ for all $i$, and assume that $\underline{m}$ satisfies conditions (c1) and (c2).
 Let $\bmr$ be the determinant of the matrix $\Amr$ 
 from (\ref{eq:matrix-def}). 

Then we have
 \begin{eqnarray} \label{E:result}
 	\bmr= \Big[ \prod_{l=0}^{s-2} p_{o([r+l]^k )} \Big] 
	\cdot p_{o( [r+s-1]^{k-s},  m_1,\ldots, m_s)}.
 \end{eqnarray}
\end{prop}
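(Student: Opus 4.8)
The plan is to prove the formula \eqref{E:result} by induction on $s$, using a Laplace-type expansion of the determinant $\bmr$ along its last column, which will produce a sum that is precisely a Pl\"ucker relation \eqref{eq:plucker-new}. For the base case $s=1$, the matrix $A_{\underline{m};r}$ is the $1\times 1$ matrix $(p_{o([r]^{k-1},m_1)})$, so $\bmr=p_{o([r]^{k-1},m_1)}$, and the empty product on the right-hand side of \eqref{E:result} gives exactly $p_{o([r]^{k-s},m_1)}$ with $s=1$; here conditions (c1) and (c2) are vacuous or trivially satisfied. This matches, so the base case holds.

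For the inductive step, assume $s\geq 2$ and that the formula holds for all smaller values (with the appropriate shifted parameters). The idea is to fix the index set $I_0=[r]^{k-1}$ appearing in the \emph{first} row and apply a Pl\"ucker relation to relate the entries of the last row of $\Amr$ to smaller determinants. Concretely, I would consider the Pl\"ucker relation \eqref{eq:plucker-new} with the $(k-1)$-subset taken to be $[r+s-1]^{k-s}$ together with appropriate consecutive indices, and the $(k+1)$-subset built from $\{m_1,\dots,m_s\}$ together with the consecutive block $[r]^{k-1}$ — chosen so that expanding gives the last-column cofactor expansion of $\bmr$ on one side and the product $\big[\prod_{l=0}^{s-2}p_{o([r+l]^k)}\big]\cdot p_{o([r+s-1]^{k-s},m_1,\dots,m_s)}$ on the other. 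The key structural point is that when one removes a $j_\ell$ from the $(k+1)$-subset, the remaining $k$-subset, after reordering via $o(\cdot)$, is either a consecutive $k$-subset (contributing a frozen-type factor $p_{o([r+l]^k)}$ that can be pulled out) or it feeds into the inductive hypothesis for the $(s-1)\times(s-1)$ minor obtained by deleting a row and the last column. Conditions (c1) and (c2) are exactly what guarantee that the signs $(-1)^\ell$ coming from the Pl\"ucker relation match the signs $(-1)^{i+s}$ coming from cofactor expansion, after accounting for the sign changes $\sgn(\pi)$ introduced by the ordering operator $o(\cdot)$; this is the bookkeeping the remark after the statement of (c1),(c2) is warning about.

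The main obstacle, and the part requiring genuine care, is the \emph{sign reconciliation}: matching the alternating signs in the Pl\"ucker relation with those in the Laplace expansion, while simultaneously tracking the permutation signs hidden inside each $p_{o(\cdots)}$ when an index $m_j$ is inserted into an otherwise-consecutive or nearly-consecutive tuple and then sorted. Condition (c2), $r+k-2\notin[m_1,m_s)$, is designed precisely so that all the $m_j$ sit on the "same side" relative to the consecutive block $[r+s-1]^{k-s}$, so that sorting $(\,[r+s-1]^{k-s},m_1,\dots,m_s)$ does not scramble the relative order of the $m_j$'s and introduces only a controlled, uniform sign; condition (c1) ensures the $m_j$ are already in cyclic order so the minors appearing inductively again satisfy (c1),(c2) with shifted parameters. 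I would handle this by first reducing, via a cyclic relabelling of $[1,n]$, to the case $b=1$ in (c1) (i.e.\ $m_1<m_2<\dots<m_s$ genuinely, no wraparound), verify the sign statement in that linearly-ordered situation by a direct but careful computation, and then note the reduction is harmless because both sides of \eqref{E:result} transform the same way under such relabelling. The remaining steps — extracting the consecutive Pl\"ucker coordinates as scalar factors and invoking the inductive hypothesis on the $(s-1)\times(s-1)$ submatrices $A_{\underline{m}';r}$ and $A_{\underline{m}';r+1}$ — are then routine, since deleting the last column and one row of $\Amr$ yields exactly a matrix of the form \eqref{eq:matrix-def} with $s$ replaced by $s-1$.
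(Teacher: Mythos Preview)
Your overall strategy --- induction on $s$, Laplace expansion, then a Pl\"ucker relation --- matches the paper's. But there is a concrete gap in the direction of your Laplace expansion. You expand along the last \emph{column}; the cofactors are then determinants of the submatrices obtained by deleting column $s$ and some row $i$. For $1<i<s$, the remaining rows are indexed by $[r]^{k-1},[r+1]^{k-1},\dots,\widehat{[r+i-1]^{k-1}},\dots,[r+s-1]^{k-1}$, a \emph{non-consecutive} list, so the resulting minor is \emph{not} of the form $A_{\underline{m}';r'}$ in \eqref{eq:matrix-def} and the induction hypothesis does not apply. Your closing sentence, naming only the two submatrices $A_{\underline{m}';r}$ and $A_{\underline{m}';r+1}$, overlooks the $s-2$ other cofactors that a column expansion forces on you.

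The paper instead expands along the last \emph{row}. Deleting the last row and column $j$ leaves rows indexed by $[r]^{k-1},\dots,[r+s-2]^{k-1}$ (still consecutive) and columns $m_1,\dots,\widehat{m_j},\dots,m_s$; this is exactly $A_{(m_1,\dots,\widehat{m_j},\dots,m_s);r}$, so induction applies to every cofactor. After substituting the inductive formula and factoring out $\prod_{l=0}^{s-3}p_{o([r+l]^k)}$, the remaining alternating sum is identified with the Pl\"ucker relation on $I=[r+s-1]^{k-1}$ and $J=[r+s-2]^{k-s+1}\cup\{m_1,\dots,m_s\}$: of its $k+1$ terms, all but $s+1$ vanish because $p_{o(I),r+j}=0$ for $j\in[s-1,k-2]$, and the single surviving ``extra'' term (removing $r+s-2$ from $J$) produces the desired factor $p_{o([r+s-2]^k)}\cdot p_{o([r+s-1]^{k-s},m_1,\dots,m_s)}$. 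Note also that the $(k+1)$-set you propose, $\{m_1,\dots,m_s\}\cup[r]^{k-1}$, has $s+k-1$ elements, not $k+1$, so it cannot serve as the $J$ in \eqref{eq:plucker-new}. Finally, the paper does not reduce to $b=1$ by cyclic relabelling --- such a relabelling alters the ordering map $o(\cdot)$ and hence the signs on both sides in a way that is not obviously compatible --- but instead handles the wraparound directly via a case analysis on the position of $1$ relative to the consecutive blocks (Cases 3a and 3b in the appendix).
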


The proof of this proposition can be found in Appendix \ref{Appendix:ProofDet}. 

\begin{remark}
	In particular, if $k=s$ and $\underline{m} = [m]^k$ 
	in the statement of Proposition \ref{P:determinant}, then we have
	\[
		\bmr=b_{m_1,m_2,\ldots,m_k;r} = \Big[\prod_{l=0}^{k-2} p_{o([r+l]^k )} 
		\Big] \cdot p_{o( [m]^k)}; 
	\]
	a product of consecutive Pl\"ucker coordinates.
\end{remark}

	\begin{proof}[Proof of Theorem~\ref{thm:frieze}]
All entries in the bottom $k-1$ rows and in the top $k-1$ rows of 
$s\mathcal P_{(k,n)}$ are zero by definition 
and the entries in the $k$th row from top and from bottom are $1$ since the consecutive 
Pl\"ucker coordinates are set to be $1$, cf.~Definition~\ref{D:Pfb=1}. 

It remains to show that the $k\times k$ determinants in $s\mathcal P_{(k,n)}$ are all 1 and 
that the $(k+1)\times (k+1)$-determinants all vanish. We will use Proposition~\ref{P:determinant} 
for both claims.

Observe that each 
$k \times k$-diamond in the special Pl\"ucker frieze $s\mathcal P_{(k,n)}$ is of the form 
$s(\Amr) = (s(a_{ij}))_{1 \leq i,j \leq k}$, where $\Amr = (a_{ij})_{1 \leq i,j \leq k}$ is a 
$k \times k$-diamond in $\mathcal P_{(k,n)}$ 
(the integer $s$ from Equation \ref{eq:matrix-def} is equal to $k$ here)
and the map $s$ is the 
specialization of consecutive Pl\"ucker coordinates to $1$ (cf.\ Definition \ref{D:Pfb=1}). 
Recall that $\Amr$ is of the form as in (\ref{eq:matrix-def}), where 
\[
	 r \in [1,n] \; \text{and} \; m \in [r+k-1, r-1]
\]
and $\underline{m} = (m,m+1, \ldots, m+k-1)$.
Then the $k$-tuple $\underline{m}$ clearly satisfies Condition (c1). We are going to show that it also satisfies Condition (c2). 
Indeed, if we had $r+k-2 \in [m,m+k-1)$, then $r+k-2 =m+j-1$ for some $1 \leq j < k$. However, since $m \in [r+k-1, r-1]$, this would imply
\[
	r+k-2 = m + j -1 \in [r+k+j-2, r + j -2].
\]
But for $1 \leq j < k$ we have $r+k-2 \in (r+j-2,r+k+j-2)$, and $[r+k+j-2, r+j-2] \cap (r+j-2,r+k+j-2) = \varnothing$; a contradiction.
So we must have $r+k-2 \notin [m,m+k-1)$.

Therefore, both (c1) and (c2) are satisfied by $\underline{m}$ and we can 
apply Proposition \ref{P:determinant} to obtain
		\[
			\mathrm{det}(\Amr) = \bmr = \prod_{l=0}^{k-2} p_{o( [r+l]^k)}  
			\cdot p_{o([m]^k)} .
		\]
		
This yields
\[
 \mathrm{det}(s(\Amr) =  s(\bmr) = \prod_{l=0}^{k-2} s(p_{o([r+l]^k)}) 
 \cdot s(p_{o( [m]^k)})  = 1,
\]
for the $k \times k$ diamond $s(\Amr)$ of the special Pl\"ucker frieze 
$s\mathcal P_{(k,n)}$. Since this holds for every $k \times k$ diamond, 
$s\mathcal P_{(k,n)}$ is indeed a $\SL_k$-frieze.

To show that it is tame, consider an arbitrary $(k+1) \times (k+1)$ diamond of 
$s\mathcal P_{(k,n)}$. It must be of the form $s(A_{[m]^{k+1};r}) = (s(a_{ij}))_{1 \leq i,j \leq k+1}$, 
where $A_{[m]^{k+1};r}$ is the corresponding diamond in the Pl\"ucker frieze 
$\mathcal P_{(k,n)}$ given by (\ref{eq:matrix-def}) for $[m]^{k+1} = (m, m+1,\ldots, m + k)$ and $m \in [r+k,r-2]$.
Similarly to the first part of the proof (for $k \times k$ diamonds), one can show that conditions (c1) and (c2) are satisfied for  $[m]^{k+1}$ and for any order-inheriting subtuple thereof. 
So we can apply Proposition~\ref{P:determinant} to compute the determinants $b_{\underline{\tilde{m}_l};r}$ for $\underline{\tilde{m}_l} = (m, \ldots, \widehat{m+l}, \ldots, m+k)$, 
for $0 \leq l \leq k$, and Laplace expansion for the determinant of $A_{[m]^{k+1};r}$ by the last row yields 
\begin{eqnarray*}
\mathrm{det}(A_{[m]^{k+1};r}) 	&= & \sum_{l=1}^{k+1} (-1)^{k+1+l}p_{o( [r+k]^{k-1}, m+l-1 )} 
\cdot b_{\underline{\tilde{m}_l};r} \\
					&=&  \sum_{l=1}^{k+1} (-1)^{k+1+l}p_{o( [r+k]^{k-1}, m_l )} \cdot \prod_{j=0}^{k-2} p_{o([r+j]^k )} p_{o(  m_1, \ldots, \widehat{m_l}, \ldots, m_{k+1} )}  \\
					&=&  \prod_{j=0}^{k-2} p_{o([r+j]^k )} \cdot \underbrace{\sum_{l=1}^{k+1} (-1)^{k+1+l}p_{o( [r+k]^{k-1}, m_l )} p_{o(  m_1, \ldots, \widehat{m_l}, \ldots, m_{k+1} )})}_{(*)}
\end{eqnarray*}
Now since (*) is precisely the Pl{\"u}cker relation on $I = [r+k]^{k-1}$ and 
$J = [m_1,m_{k+1}]$, this term equals zero and so 
$ \mathrm{det}(A_{[m]^{k+1};r})=0$.  

It follows that the determinant of the $(k+1)\times (k+1)$ diamond $s(A_{[m]^{k+1};r})$ vanishes,
\[
	\det (s(A_{[m]^{k+1};r})) = s(\det(A_{[m]^{k+1};r})) = 0,
\]
and thus $s\mathcal P_{(k,n)}$ is tame. 
\end{proof}

\begin{remark}
	In \cite[Proposition 3.2.1]{mgost14}, it is shown that the variety of $\SL_k$-friezes of width $w=n-k-1$, say over $\mathbb{C}$, 
	can be embedded into $\mathrm{Gr}(k,n)$ as the subvariety of points which can be represented by matrices whose consecutive $k\times k$-minors all coincide (and are non-vanishing). 
	In the strategy from \cite[Proposition 3.2.1]{mgost14}, we could choose the $(k \times n)$-matrix $P$ with entries in $s\mathcal{A}(k,n)$ given by
	\[
	\footnotesize{
		\begin{bmatrix}
			 1  		& 	p_{o([n-k+2]^{k-1}2)}			& 		\ldots	& 	& 	p_{o([n-k+2]^{k-1}n-k)}	&  1 			& 0 	& \ldots 	& 0 \\
			0 & 1 	& p_{o([n-k+3]^{k-1}3)} 	& \ldots & p_{o([n-k+3]^{k-1}n-k)} 	& p_{o([k-1]^{k-1}n-k+1)}	& 1 	& \ldots 	& 0 \\
			\vdots & \vdots			& \ddots			&  & 				& 			& 	& \ddots 		&  \vdots \\
			0 & 0			& 	\ldots		& 1 & p_{o([1]^{k-1}k+1)} 	& p_{o([1]^{k-1}k+2)} & \ldots & p_{o([1]n-1)} 	& 1 
		\end{bmatrix}.
		}
	\]
Each consecutive $(k\times k)$-minor of the above matrix is $1$ (as follows e.g.\ from Proposition \ref{P:determinant}). We can complete this to a unique tame $SL_k$-frieze over $s\mathcal{A}(k,n)$. By Theorem \ref{thm:frieze} this must be precisely the specialised Pl\"ucker frieze of type $(k,n)$.

Let now $\mathcal{F}$ be any tame $SL_k$-frieze over $\mathbb{C}$. As explained in \cite{mgost14}, it is uniquely determined by a $k\times n$ slice of the form
\[
	M = \begin{bmatrix}
			1 & m_{12} 	& m_{13} & \ldots & m_{1,n-k} 	& 1 			& 0 	& \ldots 	& 0 \\
			0 & 1 			& m_{23} 	& \ldots & m_{2,n-k} 	& m_{2,n-k+1}	& 1 	& \ldots 	& 0 \\
			\vdots & \vdots			& \ddots			&  & 				& 			& 	& \ddots 		&  \vdots\\
			0 & 0			& 	\ldots		& 1  & 	m_{k,k+1}			& 		\ldots	& 	& 	m_{k,n-1}	&  1
			
		\end{bmatrix},
\]
representing a point in the cone over $\mathrm{Gr}(k,n)$ with respect to the Pl\"ucker embedding. The minors $M_{[r]^kl}$ of $M$, for $1 \leq r \leq k$ and $l \in [r+k+1, r-2]$ (with the interval taken cyclically modulo $n$), are precisely the entries of the matrix $M$, and we can view the matrix $M$ as a pointwise evaluation of the matrix $P$ at a point in the cone over the Grassmannian, and consequently, the frieze $\mathcal{F}$ as a pointwise evaluation of the specialised Pl\"ucker frieze $s\mathcal{P}_{(k,n)}$ at a point in the cone over the Grassmannian $\mathrm{Gr}(k,n)$. This might justify considering the specialised Pl\"ucker frieze $s\mathcal{A}(k,n)$ the ``universal'' $SL_k$-frieze of width $w = n-k-1$.

\end{remark}

%
\subsection{Integral tame $\SL_k$-friezes from Pl\"ucker friezes}\label{sec:integral}

\ 

As an application of Theorem~\ref{thm:frieze} we obtain that 
specialising a cluster to 1 yields an integral tame $\SL_k$-frieze as we will show now. \\

\begin{definition}\label{def:specialise}
	Let $\mathcal{A}$ be a cluster algebra of rank $m$, i.e., its clusters  have cardinality $m \in \ZZ_{>0}$. The specialization of a cluster 
$\underline{x} = (x_1, \ldots, x_m)$ in $\mathcal{A}$ to a tuple $(a_1, \ldots, a_m) \in \CC^m$ 
is the algebra homomorphism  
$\mathcal{A} \to \CC$ determined by sending 
$x_i$ to $a_i$ for all $1 \leq i \leq m$.
	If $(a_1, \ldots, a_m) = (1, \ldots, 1)$, we call this the specialization of $\underline{x}$ to $1$.
\end{definition}

\begin{remark}
Note that we consider clusters in a cluster algebra to be ordered tuples, rather than sets. 
\end{remark}

We observe here that specialising a cluster to a tuple $(a_1,\dots,a_m)$ determines 
values for all cluster variables, since the cluster algebra is generated by the cluster variables, each of which can be expressed as a Laurent polynomial in any given cluster. 
Since we 
are interested in integral $\SL_k$-friezes we consider specializations 
with respect to tuples in $\ZZ_{>0}^m$. 

\begin{remark}\label{R:image positive integers}
	The image of the cluster variables in $\mathcal{A}$ under a specialization of a cluster to a 
tuple in $(\ZZ_{>0})^m$ (or in $(\QQ_{>0})^m$) lies in $\QQ_{>0}$. This is due to the Laurent phenomenon (\cite{FZ-Laurent}) and positivity (see \cite{GHKK,LS2015,MSW}): 
Every non-initial cluster variable is a Laurent polynomial whose denominator is a monomial 
in the $x_i$ and whose 
numerator is a polynomial in the $x_i$ with positive coefficients . 
In particular, specialising a cluster to $1$ sends every cluster variable in 
$\mathcal{A}$ to a positive integer.
\end{remark}

\begin{remark}[Tameness]\label{R:image of a tame frieze}
Let $F$ be a tame $\SL_k$-frieze over an integral domain $R$, and let $\varphi \colon R \to S$ 
be a unitary ring homomorphism from $R$ to an integral domain $S$. Assume that the images 
of the entries of $F$ lie in $S'$ for some subring $S'$ of $S$. Then the grid $\varphi(F)$ we 
obtain by evaluating $\varphi$ entry-wise is a tame $\SL_k$-frieze over $S'$.
\end{remark}

\begin{cor}\label{cor:cluster-1-frieze}
Let $\underline{x}$ be a cluster in $\mathcal{A}(k,n)$ and let 
$\varphi_{\underline{x}} \colon \mathcal{A}(k,n) \to \CC$ be the specialization of 
$\underline{x}$ to $1$. Then $\varphi_{\underline{x}}(s\mathcal P_{(k,n)})$ is a tame 
integral $\SL_k$-frieze of width $w=n-k-1$. 
\end{cor}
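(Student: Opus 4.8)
The plan is to obtain $\varphi_{\underline{x}}(s\mathcal{P}_{(k,n)})$ as the composite image of the frieze $s\mathcal{P}_{(k,n)}$ under two ring homomorphisms and then invoke the two ``preservation'' lemmas already proved. First I would record that, by Theorem~\ref{thm:frieze}, $s\mathcal{P}_{(k,n)}$ is a tame $\SL_k$-frieze over the integral domain $s\mathcal{A}(k,n)$. The specialisation $\varphi_{\underline{x}}$ is, by Definition~\ref{def:specialise}, an algebra homomorphism $\mathcal{A}(k,n) \to \CC$; since every consecutive (hence frozen) Pl\"ucker coordinate is a cluster variable sent to $1$ under $\varphi_{\underline{x}}$ (specialisation to $1$ sends \emph{all} cluster variables to $1$, as frozen variables lie in every cluster), $\varphi_{\underline{x}}$ kills the ideal $J$ of Remark~\ref{rem:freeze-coeff} and therefore factors as $\varphi_{\underline{x}} = \overline{\varphi}_{\underline{x}} \circ s$ for a unique algebra homomorphism $\overline{\varphi}_{\underline{x}}\colon s\mathcal{A}(k,n) \to \CC$. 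Hence $\varphi_{\underline{x}}(s\mathcal{P}_{(k,n)}) = \overline{\varphi}_{\underline{x}}(s\mathcal{P}_{(k,n)})$, the entrywise image of the $s\mathcal{A}(k,n)$-frieze under $\overline{\varphi}_{\underline{x}}$.

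Next I would apply Remark~\ref{R:image of a tame frieze} with $R = s\mathcal{A}(k,n)$, $S = \CC$, and $S' = \QQ$ (or $\ZZ$): the entries of $s\mathcal{P}_{(k,n)}$ are specialised Pl\"ucker coordinates, i.e.\ images of cluster variables of $\mathcal{A}(k,n)$, so by Remark~\ref{R:image positive integers} their values under $\overline{\varphi}_{\underline{x}}$ (equivalently under $\varphi_{\underline{x}}$) are positive integers — in particular they lie in the subring $\ZZ \subseteq \CC$. Remark~\ref{R:image of a tame frieze} then yields that $\varphi_{\underline{x}}(s\mathcal{P}_{(k,n)})$ is a tame $\SL_k$-frieze over $\ZZ$. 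Since all its non-trivial entries $m_{ij}$ are positive integers (again Remark~\ref{R:image positive integers}), it is an \emph{integral} tame $\SL_k$-frieze in the sense of Definition~\ref{def:SLk-frieze}.

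It remains to pin down the width. The frieze $s\mathcal{P}_{(k,n)}$ is, by construction (Definition~\ref{D:Plucker frieze}, Definition~\ref{D:Pfb=1}), a grid on $\ZZ \times \{1,\ldots,n+k-1\}$ with $k-1$ rows of zeroes at the top, $k-1$ at the bottom, a row of $1$s just below the top zeroes and just above the bottom zeroes, and the remaining rows genuinely lying in $s\mathcal{A}(k,n)$; counting gives $n+k-1 = (k-1) + 1 + w + 1 + (k-1)$, so the number of nontrivial rows is $w = n - k - 1$. Passing to the image under $\overline{\varphi}_{\underline{x}}$ does not change the shape of the grid, so $\varphi_{\underline{x}}(s\mathcal{P}_{(k,n)})$ has width $w = n-k-1$ as claimed. (One should also note $w \ge 1$, which holds since $2 \le k \le n/2$ forces $n \ge 2k \ge k+2$, and in fact $w = n-k-1 \ge k-1 \ge 1$.)

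I do not anticipate a genuine obstacle here: the statement is essentially a formal consequence of Theorem~\ref{thm:frieze} together with the two remarks on images of tame friezes and positivity of specialised cluster variables. The only point requiring a line of care is the factorisation $\varphi_{\underline{x}} = \overline{\varphi}_{\underline{x}} \circ s$, i.e.\ that specialising an arbitrary cluster $\underline{x}$ to $1$ sends \emph{every} consecutive Pl\"ucker coordinate to $1$ — this is because the consecutive Pl\"ucker coordinates are precisely the frozen variables, which belong to every cluster, hence in particular to $\underline{x}$, and so are literally among the $x_i$ being set equal to $1$.
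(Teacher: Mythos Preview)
Your proof is correct and follows essentially the same route as the paper: invoke Theorem~\ref{thm:frieze} for the tame $\SL_k$-frieze structure of $s\mathcal P_{(k,n)}$, then use Remarks~\ref{R:image positive integers} and~\ref{R:image of a tame frieze} to transport integrality and tameness through the specialisation, with the width read off from the definition. Your explicit factorisation $\varphi_{\underline{x}} = \overline{\varphi}_{\underline{x}} \circ s$ is a nice clarification that the paper leaves implicit (it simply applies $\varphi_{\underline{x}}$ to $s\mathcal P_{(k,n)}$ without commenting on the domain mismatch).
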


\begin{proof}
By Theorem \ref{thm:frieze} and Remark~\ref{R:image positive integers}, 
$\varphi_{\underline{x}}(s\mathcal P_{(k,n)})$ is an integral $\SL_k$-frieze; 
the tameness follows from Remark~\ref{R:image of a tame frieze}, since $\varphi_{\underline{x}}$ 
is unitary. 
Its width follows from the definition of $\mathcal P_{(k,n)}$. 
\end{proof}

\begin{remark}
We see later (Lemma \ref{Lem:specialization-inj}) that 
for $k \leq 3$ and arbitrary $n$, as well as for $k=4$ and $n=6$ 
two different clusters $\underline{x}\ne\underline{x'}$ of a cluster algebra 
$\mathcal A$ produce different images $\varphi_{\underline{x}}(s\mathcal{P}_{(k,n)})$ and 
$\varphi_{\underline{x'}}(s\mathcal{P}_{(k,n)})$ if and only if $\underline{x}$ is not a 
permutation of $\underline{x'}$. 
\end{remark}

%
\section{Connection between the categories $\cC(k,n)$ and $\SL_k$-friezes}\label{sec:frieze-mesh}

In this section, we use indecomposable modules of 
the Grassmannian cluster categories $\cC(k,n)$ to form friezes 
of the same shape as the (special) Pl\"ucker friezes. 
We describe the Ext-hammocks for entries in these friezes 
and characterize the cases where such a frieze of modules 
(see Def.~\ref{def:frieze-of-modules}) 
gives 
rise to cluster-tilting objects. With that, we will then give a bijection between 
friezes on Auslander--Reiten quivers of Grassmannian cluster categories which we will call 
\emph{mesh friezes } (see Def.~\ref{def:mesh-frieze}), 
and integral tame $\SL_3$-friezes. 

%
\subsection{The Grassmannian cluster categories}\label{sec:grassm}

\ 

We recall the definition of and results about the Grassmannian cluster categories from~\cite{JKS16}. 
Note that an alternative method for constructing an additive categorification of Grassmannian cluster 
algebras with coefficients has been provided by Demonet and Luo \cite{DemonetLuo2016}; in the special 
case $k=2$ they applied Amiot's construction of the generalized cluster category \cite{AmiotCluster} 
to an ice quiver with potential coming from a triangulation of a polygon.
Let $Q(n)$ be the cyclic quiver with vertices $1,\dots,n$ and $2n$ 
arrows $x_i:i-1\to i$, $y_i:i\to i-1$. 
Let $B=B_{k,n}$ be the completion of the path algebra 
$\CC Q(n)/\langle xy-yx,x^k-y^{n-k}\rangle$, 
where $xy-yx$ stands for the $n$ relations $x_iy_i - y_{i+1}x_{i+1}$, $i=1,\dots,n$ 
and $x^k-y^{n-k}$ stands for the $n$ relations $x_{i+k}\dots x_{i+1} - y_{i+n-k+1}\dots y_i$ 
(reducing indices modulo $n$). 
Let $t=\sum x_iy_i$. The centre $Z=Z(B)$ is isomorphic to $\CC[[t]]$. 
Then we define the \emph{Grassmannian cluster category} of type $(k,n)$, 
denoted by $\cC(k,n)$, 
as the 
category of maximal Cohen Macaulay modules for $B$. In particular, the objects of $\cC(k,n)$ are  
(left) 
$B$-modules $M$, such that $M$ is free over $Z$. 
This category is a Frobenius category, and it is stably 2-CY. It is an additive categorification 
of the cluster algebra structure of 
$\mathcal A(k,n)$, as proved in \cite{JKS16}. Note that the stable category $\underline{\mathcal C(k,n)}$ is triangulated, which follows from \cite[Theorem 4.4.1]{BuchweitzMCM}, since $B$ is Iwanaga--Gorenstein, or from \cite[I, Theorem 2.6]{Happel88}, since $\cC(k,n)$ is a Frobenius category. 
Both $\cC(k,n)$ and its stable version $\underline{\cC(k,n)}$ will be called Grassmannian 
cluster categories.

We recall from \cite{JKS16} that the number of indecomposable non projective-injective 
summands in any cluster-tilting object of $\cC(k,n)$ 
is $(k-1)(n-k-1)$. This is called the {\em rank} of the cluster category $\cC(k,n)$. 
We say that $
\cC(k,n)$ is of {\em finite type}, if if has finitely many isomorphism classes of indecomposable modules. 
Note also that (for $k\le n/2$) 
the category $\cC(k,n)$ is of finite type if and only if 
either $k=2$ and $n$ arbitrary, or $k=3$ and $n\in \{6,7,8\}$. 
These categories are of Dynkin type $A_{n-3}$, $D_4$, $E_6$ and $E_8$ respectively. 
This means that one can find a cluster-tilting object in $\underline{\cC(k,n)}$ whose quiver 
is of the corresponding Dynkin type.

\begin{remark}[Indecomposable objects of $\cC(k,n)$]\label{rem:indec}  
The following results are  from~\cite{JKS16}. 
The rank of an object $M\in\cC(k,n)$ is defined to be the 
length of $M\otimes_ZK$ for $K$ the field of fractions of the centre $Z$ (\cite[Definition 3.5]{JKS16}). 
There is a bijection between the rank one indecomposable objects of $\cC(k,n)$ and $k$-subsets of $[1,n]$. We may 
thus write any rank one module as $M_I$ where $I$ is a $k$-subset of $[1,n]$. 
In particular, the indecomposable projective-injective objects 
are indexed by the $n$ consecutive $k$-subsets of $[1,n]$ 
(note here that we consider cyclic consecutive $k$-subsets modulo $n$.
The rank one indecomposables are thus in bijection with the cluster variables of 
$\mathcal A(k,n)$ which 
are Pl\"ucker coordinates, cf. Section~\ref{sec:pluecker-new}. 
We denote the bijective association $p_I\mapsto M_I$ by $\psi_{k,n}$. 
\end{remark} 

\begin{definition}  
\label{def:frieze-of-modules}
Using the bijection between rank one modules and Pl\"ucker variables, 
we can form a frieze of rank 1 indecomposable modules through the composition 
$\psi_{k,n}\circ\varphi_{(k,n)}$. For the remainder of this section, we will write $\mathcal M_{(k,n)}$ to 
denote the 
image of $\psi_{k,n}\circ\varphi_{(k,n)}$. 
We write $0$s for the images of the $p_I$ where 
$I$ has a repeated entry. \\
As an example, we take $(k,n)=(2,5)$. 
\[
\xymatrix@C=-8pt@R=-1pt{
&0 && 0 && 0 && 0 && 0 && 0 && 0 && 0\\
  && M_{23} && M_{34} && M_{45} && M_{15} && M_{12} && M_{23} && M_{34} \\
&M_{13} && M_{24} && M_{35} && M_{14} && M_{25} && M_{13} && M_{24} && \cdots  \\
\cdots && M_{14} && M_{25} && M_{13} && M_{24} && M_{35} && M_{14} && M_{25} \\
& M_{45}  && M_{15} && M_{12} && M_{23} && M_{34} && M_{45} && M_{15} && M_{12} \\
 && 0 && 0 && 0 && 0 && 0 && 0 && 0
}
\]

\end{definition}

\begin{remark}\label{rem:finite-types} 
In the finite types, the indecomposable modules are well-known. 
For $k=2$, they are exactly the rank 1 indecomposables. The Auslander-Reiten quivers 
of the categories $\cC(3,n)$, for $n=6,7,8$ are described in~\cite{JKS16}. We recall 
some of this information here. 
\\
(i) 
For $(3,6)$, there are 22 indecomposable objects. Among them, 20 are rank one modules. 
The additional two are rank two modules filtered by $M_{135}$ and $M_{246}$ (in both ways). Altogether, 
there are 6 projective-injective indecomposables and 16 other indecomposable objects. 
The Dynkin type of $\cC(3,6)$ is $D_4$. \\
(ii) 
For $(3,7)$, there are 14 rank 2 modules filtered by two rank 1 modules in addition to the 35 rank one modules. 
The category $\cC(3,7)$ has 7 projective-injectives objects and is a cluster category of type $E_6$. \\
(iii) In addition to the 
the 56 rank one modules, $\cC(3,8)$ has 56 rank two modules and 24 rank 3 modules, 
all filtered by rank 1 modules. It has 8 projective-injective objects. The Dynkin type of 
$\cC(3,8)$ is $E_8$. \\ 
The categories $\cC(3,9)$ and $\cC(4,8)$ are tame, these categories are known to be 
of tubular type, see~\cite{BBGE18}, where the 
non-homogenous tubes are described. \\ 
 
\end{remark}

%
\subsection{Description of Ext-hammocks}

\ 

We determine the shape of the {\em Ext-hammocks in $\mathcal M_{(k,n)}$},  
that is, given $I$ we describe the set of all $J$ appearing in the frieze $\mathcal M_{(k,n)}$
such that $\text{Ext}^1_{\mathcal{C}(k,n)}(M_I, M_J)$ is nonzero. 
We prove that the Ext-hammocks consist of two maximal rectangles determined by $I$.  
We use the result in \cite{JKS16}, where the authors determine a precise combinatorial condition for 
two rank 1 modules to have a nonzero extension.  For this section, we allow arbitrary $1<k<n-1$. 

Two $k$-subsets $I, J$ of $[1,n]$ are said to be {\it non-crossing} if there are no cyclically ordered $a, b, c, d$ with $a, c \in I \setminus  J$ and $b, d \in J \setminus  I$.  If such $a, b, c, d$ do occur, then $I$ and $J$ are {\it crossing}.  

\begin{prop}\cite[Proposition 5.6]{JKS16}\label{rank1-ext}
Let $I,J$ be $k$-subsets of $[1,n]$. \\ 
Then $\text{Ext}^1_{\mathcal{C}(k,n)} (M_I,M_J) = 0$ if and only if $I$ and $J$ are non-crossing.
\end{prop}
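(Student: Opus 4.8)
\emph{Proof proposal.} The plan is to compute $\mathrm{Ext}^1$ directly from the explicit description of the rank one modules $M_I$ as lattice (profile) modules over $B = B_{k,n}$ given in \cite{JKS16}. Since $B$ is Iwanaga--Gorenstein and $\cC(k,n) = \mathrm{CM}(B)$ is the category of maximal Cohen--Macaulay $B$-modules, which is closed under extensions inside $\mathrm{mod}\,B$, for $M, N \in \cC(k,n)$ the Yoneda group $\mathrm{Ext}^1_{\cC(k,n)}(M,N)$ formed in the exact category agrees with $\mathrm{Ext}^1_B(M,N)$; so it suffices to work with ordinary module extensions. Recall that $e_i M_I$ is free of rank one over $Z = \CC[[t]]$ at every vertex $i$, with $x_i, y_i$ acting by powers of $t$ prescribed by the cyclic profile of the $k$-subset $I$; equivalently $M_I$ is the submodule of a fixed "big" module cut out by $I$ read cyclically. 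Since $B$ is semiperfect, $M_I$ admits a minimal projective presentation.

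First I would produce such a presentation $P_1 \xrightarrow{f} P_0 \to M_I \to 0$, where $P_0$ and $P_1$ are finite direct sums of indecomposable projectives $B e_a = M_{[a]^k}$ ($a \in [1,n]$), the summands corresponding respectively to the "peaks" and "valleys" of the cyclic profile of $I$. Applying $\Hom_B(-,M_J)$ and using $\mathrm{Ext}^1_B(P_0,M_J) = 0$ gives $\mathrm{Ext}^1_B(M_I,M_J) = \mathrm{coker}\bigl(\Hom_B(P_0,M_J) \xrightarrow{f^*} \Hom_B(P_1,M_J)\bigr)$. Each $\Hom_B(M_{[a]^k},M_J)$ is a cyclic $Z$-module, a quotient of $\CC[[t]]$ whose generator sits in an explicit $t$-degree determined by $a$ and $J$ (the Hom-computation of \cite{JKS16}), and $f^*$ is likewise explicit in these coordinates. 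Thus the vanishing of $\mathrm{Ext}^1_B(M_I,M_J)$ becomes a concrete combinatorial condition comparing the peak/valley data of $I$ with the positions of the elements of $J$.

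The final step is to identify this condition with non-crossing of $I$ and $J$. For \emph{crossing} $\Rightarrow$ $\mathrm{Ext}^1 \neq 0$: given cyclically ordered $a,c \in I \setminus J$ and $b,d \in J \setminus I$, I would exhibit an explicit non-split extension $0 \to M_J \to E \to M_I \to 0$ whose middle term $E$ is the rank two module "generated by $I$ and $J$ together" (the same kind of higher-rank indecomposable occurring, e.g., in $\cC(3,6)$); non-splitness follows since $E$ has rank two while $M_I \oplus M_J$ does not. For the converse, \emph{non-crossing} $\Rightarrow$ $\mathrm{Ext}^1 = 0$: when $I$ and $J$ do not cross their profiles are nested/compatible, so every generator of $\Hom_B(P_1,M_J)$ lifts along $f^*$ and the cokernel vanishes. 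I expect the main obstacle to lie entirely in the bookkeeping: pinning down the profile-to-$\Hom$ dictionary, carrying out the cyclic (modulo $n$) case analysis, and checking that "the cokernel is nonzero" is equivalent to the existence of a cyclically ordered quadruple $a,c \in I\setminus J$, $b,d \in J\setminus I$. The stably $2$-Calabi--Yau structure of $\underline{\cC(k,n)}$ can be used to halve this work, since it yields $\mathrm{Ext}^1_{\cC(k,n)}(M_I,M_J) \cong D\,\mathrm{Ext}^1_{\cC(k,n)}(M_J,M_I)$ and makes the symmetry of the statement in $I$ and $J$ transparent.
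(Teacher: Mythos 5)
First, a remark on the comparison: the paper does not actually prove this proposition --- it is quoted verbatim from \cite[Proposition~5.6]{JKS16} --- so the only meaningful benchmark is the argument in that source, which your outline essentially reconstructs (profile description of the rank one modules, explicit $\Hom$/$\mathrm{Ext}$ computations over $B$, and the $2$-Calabi--Yau symmetry to halve the work). The framing is sound: $\mathrm{CM}(B)$ is extension-closed in $\mathrm{mod}\,B$, so the Yoneda $\mathrm{Ext}^1$ of the exact (Frobenius) structure agrees with $\mathrm{Ext}^1_B$ and with $\Hom_{\underline{\cC}}(M_I, M_J[1])$, and the peak/valley projective presentation is the right tool.

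There is, however, one step that fails as stated. You justify non-splitness of the extension $0 \to M_J \to E \to M_I \to 0$ in the crossing case by asserting that ``$E$ has rank two while $M_I \oplus M_J$ does not.'' But rank --- the length of $-\otimes_Z K$ --- is additive on short exact sequences of Cohen--Macaulay $B$-modules, so the middle term of \emph{every} extension of $M_I$ by $M_J$, split or not, has rank exactly $2 = \mathrm{rk}(M_I)+\mathrm{rk}(M_J) = \mathrm{rk}(M_I \oplus M_J)$. Rank therefore cannot detect non-splitness. To repair this direction you must either (a) show that the rank two module $E$ obtained by resolving the crossing of the two profiles is indecomposable, or at least not isomorphic to $M_I \oplus M_J$ --- for instance by exhibiting a generator of $E$ on which the $x_i$ and $y_i$ act in a way incompatible with any direct sum decomposition into the two given profiles, or by comparing $\dim_\CC$ of suitable $\Hom$-spaces from projectives --- or (b) bypass the construction of $E$ entirely and verify from your cokernel description of $\mathrm{Ext}^1_B(M_I,M_J)$ that $f^*$ is not surjective whenever a crossing quadruple $a,c \in I\setminus J$, $b,d \in J\setminus I$ exists; option (b) is the cleaner route and is the one most consistent with the rest of your plan. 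Finally, be aware that the deferred ``bookkeeping'' (the profile-to-$\Hom$ dictionary and the cyclic case analysis showing that non-vanishing of the cokernel is \emph{equivalent} to the existence of a crossing) is where essentially all of the content of the proposition lies, so the proposal as written is a plausible strategy rather than a proof.
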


Next, we  define a maximal rectangle in the Pl{\"u}cker frieze starting (resp. ending) at $I$. 
Let $I=o([r]^{k-1},m)$ be a non-consecutive $k$-subset. Then we say that 
$J$ belongs to the {\em maximal rectangle starting at} $I$ if 
$J=o([s]^{k-1}, p)$ where $s\in \{r, r+1, \dots, m-k\}$ and $p\in \{m, m+1, \dots, r-2\}$.  
Similarly, we say that $J$ belongs to the {\it maximal rectangle ending at} 
$I$ whenever $s\in \{r, r-1, \dots m+2\}$ and $p\in \{ m, m-1, \dots, r+k\}$. 
Note that the collection of 
such elements indeed forms a maximal rectangle 
in $\mathcal M_{(k,n)}$. 

\begin{prop}\label{prop:ext-hammock}
Let $I = o([r]^{k-1}, m)$ be a non-consecutive $k$-subset and let $J$ be another element of $\mathcal{P}_{(k,n)}$. 
Then $\text{Ext}^1_{\mathcal{C}(k,n)}(M_I, M_J)$ is nonzero 
if and only if $J$ belongs to the maximal rectangle starting at $ o([r+1]^{k-1}, m+1)$ 
or the maximal rectangle ending at  $o([r-1]^{k-1}, m-1)$.
\end{prop}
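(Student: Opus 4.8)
The plan is to reduce the statement to the combinatorial criterion of Proposition \ref{rank1-ext}: $\text{Ext}^1_{\mathcal C(k,n)}(M_I,M_J)\neq 0$ if and only if $I$ and $J$ are crossing $k$-subsets. So the whole proposition becomes a purely combinatorial claim about almost consecutive $k$-subsets appearing in the frieze $\mathcal M_{(k,n)}$. Write $I=o([r]^{k-1},m)$, so that $I$ consists of the consecutive block $[r]^{k-1}=\{r,r+1,\dots,r+k-2\}$ together with the single extra index $m\in (r+k-2,r-1)$ (reducing modulo $n$ throughout, and using the convention that if $m$ lands in $\{1,\dots,k-1\}$ after the shift the entry is $0$ and contributes nothing). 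First I would record what it means for another frieze entry $J=o([s]^{k-1},p)$ to cross $I$: one needs cyclically ordered $a,b,c,d$ with $a,c\in I\setminus J$ and $b,d\in J\setminus I$. Since both $I$ and $J$ are each a consecutive $(k-1)$-block plus one extra point, the symmetric differences $I\setminus J$ and $J\setminus I$ are highly constrained, and a short case analysis on the position of $s$ relative to $r$ and of $p$ relative to $m$ will pin down exactly when a crossing quadruple exists.

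The key steps, in order: (1) Fix $I$ and describe the "support'' of $I$ on the cycle $\ZZ/n$, namely the block $[r,r+k-2]$ and the isolated point $m$. (2) For a candidate $J=o([s]^{k-1},p)$, split into cases according to whether the block $[s]^{k-1}$ overlaps the block $[r]^{k-1}$, whether it straddles $m$, and where $p$ sits. In each case compute $I\setminus J$ and $J\setminus I$ explicitly as unions of at most two arcs. (3) Check the crossing condition: a crossing quadruple $a<b<c<d$ (cyclically) with $a,c\in I\setminus J$, $b,d\in J\setminus I$ exists precisely when the arcs of $I\setminus J$ and the arcs of $J\setminus I$ interleave around the circle. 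Translating "interleave'' into inequalities on $r,m,s,p$ yields exactly the two rectangular regions: $s\in\{r+1,\dots,m-k+1\}$ together with $p\in\{m+1,\dots,r-1\}$ (this is the maximal rectangle starting at $o([r+1]^{k-1},m+1)$, after re-indexing via the definition preceding the proposition), and the mirror-image inequalities $s\in\{r-1,\dots,m+1\}$, $p\in\{m-1,\dots,r+k-1\}$ (the maximal rectangle ending at $o([r-1]^{k-1},m-1)$). (4) Conversely, for $J$ outside both rectangles, exhibit that $I$ and $J$ are non-crossing by showing $I\setminus J$ lies in a single arc of the complement of $J\setminus I$, hence $\text{Ext}^1=0$ by Proposition \ref{rank1-ext}. (5) Note that the two rectangles meet the diagonal through $I$ only at $I$ itself, which is why one shifts to $o([r\pm1]^{k-1},m\pm1)$ as the corner, and that consecutive (frozen) entries $J$ are automatically non-crossing with everything, consistent with $\text{Ext}^1(M_I,M_{\text{frozen}})=0$.

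I expect the main obstacle to be the bookkeeping in step (2)–(3): because everything is cyclic modulo $n$ and both $I$ and $J$ can "wrap around'', the symmetric differences split into a varying number of arcs depending on the relative cyclic positions of $r,r+k-2,m,s,s+k-2,p$, so the case analysis is genuinely the content of the proof and one must be careful not to miss a configuration (in particular configurations where the extra point $p$ of $J$ falls inside the block of $I$, or vice versa, and configurations where a shifted index collapses an entry to $0$). A clean way to organise this, which I would adopt, is to fix the position of the block $[s]^{k-1}$ first — there are essentially three regimes: it is contained in $I$'s "long gap'' $(r+k-2, r-1)$, it overlaps $I$'s block, or it straddles the point $m$ — and within each regime let $p$ range, reducing each sub-case to a one- or two-variable inequality. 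Once the crossing/non-crossing dichotomy is expressed as these inequalities, matching them to the definitions of "maximal rectangle starting at'' and "maximal rectangle ending at'' given just before the statement is immediate.
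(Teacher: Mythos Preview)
Your reduction via Proposition~\ref{rank1-ext} to a purely combinatorial crossing/non-crossing check is exactly what the paper does, and your target inequalities for the two rectangles are correct. The difference is organisational. The paper does not case on the position of the block $[s]^{k-1}$ relative to $[r]^{k-1}$ and $m$ as you propose; instead it first records a compact characterisation: $I$ and $J$ cross if and only if $J\cap (r+k-2,m)\neq\varnothing$, $J\cap (m,r)\neq\varnothing$, and $m\notin J$. From these three conditions one sees immediately that $p\notin [r,r+k-2]$ (otherwise the block $[s]^{k-1}$ would have to supply points in both open arcs, forcing $m\in[s]^{k-1}\subseteq J$), and then the whole analysis splits into just two cases according to whether $p\in(m,r)$ or $p\in(r+k-2,m)$. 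Each of these cases is then a one-line argument.

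Your plan of fixing the block $[s]^{k-1}$ first and running through three regimes (contained in the long gap, overlapping the block of $I$, straddling $m$) with $p$ varying inside each will certainly work, but it is the ``long way round'': it multiplies the number of sub-cases and makes the bookkeeping you flag in your last paragraph genuinely heavier than it needs to be. The paper's three-condition test is the shortcut that collapses your anticipated case explosion; you might consider deriving it first and then organising by $p$ instead of by $[s]^{k-1}$.
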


\begin{proof}
By Proposition \ref{rank1-ext} it suffices to show that $I, J$ are crossing if and only if $J$ belongs to the maximal rectangle starting at $o( [r+1]^{k-1}, m+1)$ or the maximal rectangle ending at  $o( [r-1]^{k-1}, m-1)$.  Recall that 
$$I = o( [r]^{k-1}, m)= \{ r, r+1, \dots, r+k-2, m\}$$
and let 
$$J= o( [s]^{k-1}, p)=\{s, s+1, \dots, s+k-2, p\}$$
such that $I$ and $J$ cross.  In the figure below we depict elements in $I$ on a circle, and observe that $I$ and $J$ cross if and only if the following three conditions are satisfied.

\newlength\Colsep
\setlength\Colsep{10pt}

\begin{minipage}{\textwidth}
\begin{minipage}[c][4.5cm][c]{\dimexpr0.5\textwidth-0.5\Colsep\relax}
\hspace{.5cm}{\scalebox{.8}{
\begingroup%
  \makeatletter%
  \providecommand\color[2][]{%
    \errmessage{(Inkscape) Color is used for the text in Inkscape, but the package 'color.sty' is not loaded}%
    \renewcommand\color[2][]{}%
  }%
  \providecommand\transparent[1]{%
    \errmessage{(Inkscape) Transparency is used (non-zero) for the text in Inkscape, but the package 'transparent.sty' is not loaded}%
    \renewcommand\transparent[1]{}%
  }%
  \providecommand\rotatebox[2]{#2}%
  \ifx\svgwidth\undefined%
    \setlength{\unitlength}{146.513386bp}%
    \ifx\svgscale\undefined%
      \relax%
    \else%
      \setlength{\unitlength}{\unitlength * \real{\svgscale}}%
    \fi%
  \else%
    \setlength{\unitlength}{\svgwidth}%
  \fi%
  \global\let\svgwidth\undefined%
  \global\let\svgscale\undefined%
  \makeatother%
  \begin{picture}(1,0.86524655)%
    \put(0,0){\includegraphics[width=\unitlength]{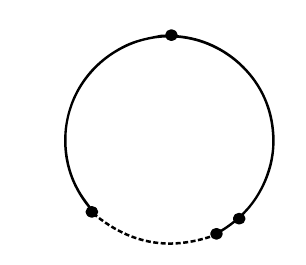}}%
    \put(0.53741428,0.7980172){\color[rgb]{0,0,0}\makebox(0,0)[lb]{\smash{$m$}}}%
    \put(0.85137877,0.13303653){\color[rgb]{0,0,0}\makebox(0,0)[lt]{\begin{minipage}{0.29641363\unitlength}\raggedright \end{minipage}}}%
    \put(0.81237716,0.12718618){\color[rgb]{0,0,0}\makebox(0,0)[lb]{\smash{$r$}}}%
    \put(0.73047321,0.10573521){\color[rgb]{0,0,0}\makebox(0,0)[lt]{\begin{minipage}{0.38026752\unitlength}\raggedright $r+1$\end{minipage}}}%
    \put(-0.00666085,0.16228783){\color[rgb]{0,0,0}\makebox(0,0)[lt]{\begin{minipage}{0.44657056\unitlength}\raggedright $r+k-2$\end{minipage}}}%
  \end{picture}%
\endgroup%
}} 
\end{minipage}\hfill
\begin{minipage}[c][4.8cm][c]{\dimexpr0.5\textwidth-0.5\Colsep\relax}
$J\cap (r+k-2,m)\not=\emptyset$

\vspace{.2cm}
$J\cap (m,r)\not=\emptyset$

\vspace{.2cm}
$m\not \in J$
\end{minipage}
\end{minipage}

Note that $p\not\in [r, r+k-2]$, because otherwise for the first two conditions above to be satisfied we must have $m\in [s]^{k-1}\subset J$.  This clearly contradicts the third condition.  Therefore, we have two cases $p\in (m,r)$ or $p\in (r+k-2, m)$.  

Suppose $p \in (m,r)$.  Then $I$ and $J$ cross if and only if $s+j \in (r+k-2, m)$ for some $j\in \{0, 1, \dots, k-2\}$, and also $s+j\not=m$ for any such $j$.  This implies that $s+k-2<m$.  On the other hand, we must also have that $r<s$.  All together, we obtain $r<s<m-k+2$.   Thus, we see from Figure \ref{ext-hammock} that $J$ lies in the maximal rectangle starting at 
$o( [r+1]^{k-1}, m+1)$.  

Similarly, suppose $p\in (r+k-2, m)$.  Then $I$ and $J$ cross if and only if $s\in (m,r)$.  From Figure \ref{ext-hammock} we see that this mean $J$ lies in the maximal rectangle ending at 
$o( [r-1]^{k-1}, m-1)$.  This shows the desired claim.

\end{proof}

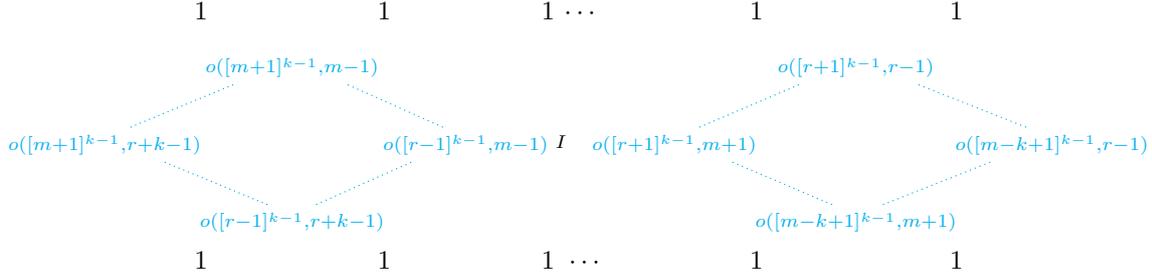
\begin{figure}
{\small
\vspace{0cm}
$$\xymatrix@C=-9pt@R=5pt{&1&&1&&1 && \cdots && 1 && 1 && \\
&& {\color{cyan} _{o( [m+1]^{k-1}, m-1)}}  \ar@{.}@[cyan][ddrr] && && && 
 && {\color{cyan} _{o( [r+1]^{k-1}, r-1)}} \ar@{.}@[cyan][ddrr]\\
&&&&&   &&  \\
{\color{cyan} _{o( [m+1]^{k-1}, r+k-1)}} \ar@{.}@[cyan][uurr]\ar@{.}@[cyan][ddrr]&& 
 && {\color{cyan} _{o( [r-1]^{k-1}, m-1)}} && \  _I 
 && {\color{cyan} _{o( [r+1]^{k-1}, m+1)}} \ar@{.}@[cyan][uurr] \ar@{.}@[cyan][ddrr]&& 
  &&  {\color{cyan} _{o( [m-k+1]^{k-1}, r-1)}} \\
&&&&&  && \\
&& {\color{cyan} _{o( [r-1]^{k-1}, r+k-1)}}  \ar@{.}@[cyan][uurr] && && && 
 && {\color{cyan} _{o( [m-k+1]^{k-1}, m+1)}} \ar@{.}@[cyan][uurr]\\
&1&&1&&1 &&\ \cdots && 1 && 1 && } $$}
\caption{Ext-hammock of $M_I$ where $I=o([r]^{k-1}, m)$}
\label{ext-hammock}
\end{figure}

For arbitrary $k$ and $n$, the category $\mathcal{C}(k,n)$ contains a cluster-tilting object 
consisting entirely of modules indexed by Pl\"ucker coordinates.  This follows from the results 
in \cite{Scott06}.  Next, we describe for which $k$ and $n$ there exists a cluster-tilting objected 
indexed by the almost consecutive Pl\"ucker coordinates.

\begin{prop}\label{prop:cluster-in-F} 
A Pl{\"u}cker frieze of type $(k,n)$ contains a collection $\mathcal{I}$ of almost consecutive 
$k$-element subsets such that $\oplus_{I\in\mathcal{I}} M_I$ is a cluster-tilting object of 
$\mathcal{C}(k,n)$ if and only if 
$k=2, 3$ with $n$ arbitrary, or $k=4$ and $n=6$. 
\end{prop}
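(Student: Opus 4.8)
The plan is to characterize exactly when a Plücker frieze $\mathcal{P}_{(k,n)}$ is ``large enough'' to contain a cluster-tilting object of $\mathcal{C}(k,n)$, by comparing the number of distinct rank-one indecomposables appearing in $\mathcal{P}_{(k,n)}$ against the rank $(k-1)(n-k-1)$ of the cluster category (recalled after Remark~\ref{rem:finite-types}), together with the Ext-vanishing constraints coming from Proposition~\ref{prop:ext-hammock}. First I would count the non-consecutive $k$-subsets occurring in $\mathcal{P}_{(k,n)}$: these are precisely the almost consecutive ones, of the form $o([r]^{k-1},m)$ with $m \in [r+k-1,r-1]$ modulo $n$, giving $n(n-k-1)$ ordered pairs but with repetitions, and after accounting for the repetitions (each almost consecutive $k$-subset that is not itself consecutive arises from a controlled number of positions) one obtains the precise count of distinct rank-one modules $M_I$ available in $\mathcal{M}_{(k,n)}$.

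The next step is the easy direction: for $k=2$ (any $n$), $k=3$ with $n\in\{6,7,8\}$, and $(k,n)=(4,6)$, I would exhibit an explicit collection $\mathcal{I}$. For $k=2$ this is classical — any triangulation of the $n$-gon, equivalently a slice/fan of Plücker coordinates $p_{1,j}$ read off a fan triangulation — and those all lie in $\mathcal{P}_{(2,n)}$. For $k=3$, $n=6,7,8$ the finite-type classification in Remark~\ref{rem:finite-types} together with the known AR-quivers from \cite{JKS16} lets me point to a cluster-tilting object consisting entirely of rank-one modules (recalling that for $k=3$ all cluster variables that are Plücker coordinates are rank-one, and in these small cases one checks a compatible — i.e.\ pairwise non-crossing, hence Ext-orthogonal by Proposition~\ref{rank1-ext} — family of $(k-1)(n-k-1)$ almost consecutive $3$-subsets exists inside $\mathcal{P}_{(3,n)}$). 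For $(4,6)$, $\mathcal{C}(4,6)\cong\mathcal{C}(2,6)$ has rank $3$ and is of type $A_3$; translating a type $A_3$ cluster of Plücker coordinates through the duality $\mathrm{Gr}(4,6)\cong\mathrm{Gr}(2,6)$ gives a compatible collection of almost consecutive $4$-subsets visible in $\mathcal{P}_{(4,6)}$.

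The main obstacle — and the core of the proof — is the ``only if'' direction: showing that for $k\ge 4$ with $(k,n)\ne(4,6)$, and more generally whenever $\mathcal{C}(k,n)$ is of infinite type, no such $\mathcal{I}$ exists. My plan here has two prongs. For infinite type, a cluster-tilting object in $\mathcal{C}(k,n)$ has infinitely many mutation classes and, more to the point, the collection of rank-one indecomposables appearing in a single Plücker frieze is finite and of a rigid rectangular shape; I would argue that in infinite type there is no cluster-tilting object supported entirely on the rank-one modules lying in $\mathcal{P}_{(k,n)}$ — either because the rank $(k-1)(n-k-1)$ exceeds the maximal size of a pairwise non-crossing family of almost consecutive $k$-subsets (an extremal combinatorial estimate using the Ext-hammock rectangles of Proposition~\ref{prop:ext-hammock} to bound how many mutually Ext-orthogonal almost consecutive subsets can coexist), or because any maximal rigid such family fails to be cluster-tilting (it misses some indecomposable it must be compatible with, e.g.\ a higher-rank module, obstructing maximality). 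For the remaining \emph{finite-type} cases with $k\ge 4$ — which by the finite-type list means only $k=4,5$ with small $n$, e.g.\ $(4,7),(4,8),(5,8),\dots$ that are tame or wild — one checks directly, as above, that the rank is too large to be realized by non-crossing almost consecutive $k$-subsets. Concretely I expect the cleanest uniform statement to be: the maximal cardinality of a pairwise non-crossing collection of non-consecutive almost consecutive $k$-subsets equals $(k-1)(n-k-1)$ if and only if $k\le 3$ or $(k,n)=(4,6)$, and otherwise it is strictly smaller; proving this sharp combinatorial inequality, using the two-rectangle description of Ext-hammocks, is where the real work lies.
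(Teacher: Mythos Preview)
Your proposal has a genuine gap that stems from misreading the statement. The proposition asserts that for $k=3$ the Plücker frieze contains a cluster-tilting object for \emph{every} $n$, not only for $n\in\{6,7,8\}$. Your ``if'' direction only treats $k=3$ in the finite-type cases, and your ``only if'' direction explicitly claims that ``whenever $\mathcal{C}(k,n)$ is of infinite type, no such $\mathcal{I}$ exists'' --- but this is false: for $k=3$ and $n\ge 9$ the category is of infinite type and the frieze \emph{does} contain a cluster-tilting object. So infinite type is not the obstruction, and the dichotomy you set up (finite type versus infinite type) is not the right one. The paper instead gives, for $k=3$ and arbitrary $n$, the explicit pairwise non-crossing collection
\[
\{1,2,m\}_{m=4,\dots,n-1}\quad\text{and}\quad \{s,s+1,1\}_{s=3,\dots,n-2},
\]
which has the required size $2(n-4)$ for all $n$.

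For $k\ge 4$ your outline is too vague to succeed as written. You correctly identify that the problem reduces to bounding the maximal size of a pairwise non-crossing family of almost consecutive $k$-subsets, but you are missing the key idea the paper uses: by Proposition~\ref{prop:ext-hammock} the Ext-hammocks in the Plücker frieze have exactly the same rectangular shape as Ext-hammocks in a type~$A$ cluster category of width $w=n-k-1$, so any region shaped like a type~$A$ fundamental domain contains at most $w$ pairwise non-crossing elements. The paper then tiles (two copies of) the period-$n$ frieze by such fundamental regions and derives the inequality $(k-1)w>\lceil 2n/(w+3)\rceil w$ for $k>4$, which reduces to an elementary estimate; the borderline case $k=4$ is handled separately, yielding only $(4,6)$ and $(4,7)$ to check by hand. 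Your suggestion that a maximal rigid family might fail to be cluster-tilting because of higher-rank modules is not needed and would not help: rigidity plus having $(k-1)(n-k-1)$ summands already forces cluster-tilting, so the obstruction is purely the combinatorial size bound.
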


\begin{proof}
In general, the cluster category $\mathcal{C}(k,n)$ contains a cluster-tilting object consisting of 
$(k-1)(n-k-1)$ indecomposable summands.  Therefore, we want to show that a 
Pl{\"u}cker frieze contains $(k-1)(n-k-1)$ distinct $k$-element subsets that are pairwise 
non-crossing if and only if $k=2,3$ or $k=4$ and $n=6$. 

If $k=2$ then every object in $\mathcal{C}(k,n)$ is of the form $M_I$ where $I$ is a 2 element subset of $[1, n]$.  In particular, every such $I$ lies in the associated Pl{\"u}cker frieze.  This shows the claim for $k=2$.  

If $k=3$ we want to find $2(n-4)$ pairwise non-crossing elements in the Pl{\"u}cker frieze.  Consider the following collection of 3-element subsets of $[1,n]$ consisting of two disjoint families of size $n-4$.
$$\{1,2,m\}_{m=4,5, \dots, n-1} \hspace{1cm} \text{and} \hspace{1cm} \{s, s+1, 1\}_{s=3, 4, \dots, n-2}$$
Each of these corresponds to a diagonal in the Pl{\"u}cker frieze, thus by Proposition 
\ref{prop:ext-hammock} we see that no two elements in the same family cross.  Moreover, $\{1,2,m\}$ does not cross $\{s,s+1,1\}$ by definition.   This shows that the collection of 3-element subsets above are pairwise non-crossing.

If $k\geq 4$ we want to show that the associated Pl{\"u}cker frieze does not contain $(k-1)(n-k-1)$ pairwise non-crossing elements unless $k=4$ and $n=6$.  
We make use of the following key observation here: the Ext-hammocks in a frieze of width $w$ have the same shape as the Ext-hammocks in the cluster 
category of type $A$ with rank $w$.  In particular, any region in the frieze that has the same shape as the fundamental region in the associated cluster 
category of type $A$, as depicted on the left in Figure \ref{fund_region}, contains at most $w$ pairwise non-crossing objects.  

First we consider the case $k=4$ separately.  We see that the Pl{\"u}cker frieze admits two copies of the fundamental region followed by two columns, because we have $w=n-k-1=n-5$ and the period of the frieze is $n$.  
See Figure \ref{fund_region} on the right.  Now suppose that such frieze admits a collection $\mathcal{I}$ of $3w$ pairwise non-crossing objects.  Because the frieze contains two such fundamental regions, each of which admits at most $w$ non-crossing object, we conclude that the two remaining columns contain at least $w$ element of $\mathcal{I}$.  However, a frieze is $n$-periodic, and the same argument as above implies that any two neighbouring columns contain at least $w$ elements of $\mathcal{I}$. Because there are $w+5$ columns we conclude that $\mathcal{I}$ contains at least 
$$\Big\lfloor \frac{w+5}{2}\Big\rfloor w$$ 
elements.   We also know that the size of $\mathcal{I}$ is $3w$ which implies that $\Big\lfloor \frac{w+5}{2}\Big\rfloor \leq 3$.  Therefore, there are two possibilities $w=1, k=4, n=6$ or $w=2, k=4, n=7$.  In the first case, we see from the diagram below that we can take $\mathcal{I}=\{1235, 1345, 1356\}$.

$$\xymatrix@C=2pt@R=2pt{1&& 1 && 1 && 1 && 1 && 1 && 1 && 1\\
&1235 && 2346 && 1345 && 2456 && 1356 && 1246 && 1235\\
1&& 1 && 1 && 1 && 1 && 1 && 1 && 1}$$

In the second case, one can check that it is not possible to find $\mathcal{I}$ of size 6.  This completes the proof for $k=4$.

Finally, suppose $k>4$ and let $\mathcal{I}$ be a collection of pairwise non-crossing elements in the Pl{\"u}cker frieze of size $(k-1)(n-k-1)=(k-1)w$.  By the same reasoning as in the previous case we think of subdividing the frieze into fundamental regions.  We begin by taking two copies of the frieze, so we have a rectangular array of elements of width $w$ and length $2n$. Next, we want to subdivide it into rectangles of width $w$ and length $w+3$, see Figure \ref{fund_region} on the right. Moreover, each such rectangular region contains at most $2w$ elements of $\mathcal{I}$.  But in the beginning we took two copies of the frieze, so dividing by two, we see that a Pl{\"u}cker frieze contains at most 
$$\Big\lceil \frac{2n}{w+3}\Big\rceil w$$
pairwise non-crossing elements.  To prove the claim it suffices to show that 
$$(k-1)w > \Big\lceil \frac{2n}{w+3}\Big\rceil w$$
Below, we will show that 
\begin{equation}\label{eq1}
k-1 > \frac{2n}{w+3}+1
\end{equation}
holds, which in turn implies the equation above.  Multiplying equation (\ref{eq1}) by $w+3$ and substituting $w+3=n-k+2$ we obtain 
\begin{equation}\label{eq2}
(k-4)n>(k-2)^2
\end{equation}
Since $k>4$ we can write
$$n>k-2 > \frac{(k-2)(k-2)}{k-4}$$
which in turn implies equations (\ref{eq2}) and $(\ref{eq1})$.  This completes the proof in the case $k>4$.

\begin{figure}
 {\scalebox{.8}{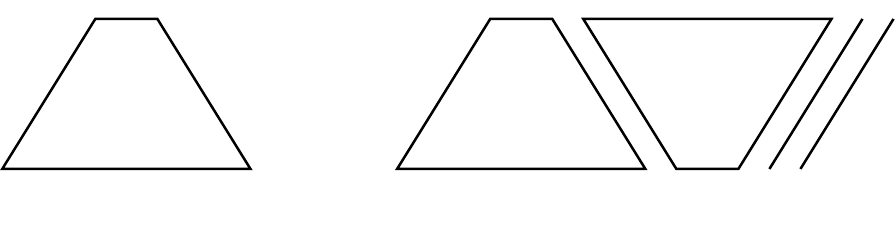}} 
 \caption{Fundamental region (left) and Pl{\"u}cker frieze where $k=4$ (right)}
 \label{fund_region}
\end{figure}
\end{proof}

In terms of cluster algebras we get the following:

\begin{cor}\label{cor:cluster-in-F-algebra}
A Pl{\"u}cker frieze of type $(k,n)$ contains a collection $\mathcal{I}$ of almost consecutive $k$-element 
subsets such that the collection 
$\{p_I\mid I\in \mathcal I\}$ is a cluster in $s\mathcal A(k,n)$ if and only if 
$k=2, 3$ with $n$ arbitrary or $k=4$ and $n=6$. 
\end{cor}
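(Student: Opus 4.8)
The plan is to reduce the statement to Proposition~\ref{prop:cluster-in-F}, which we have already established, using the dictionary between cluster-tilting objects of $\mathcal{C}(k,n)$ and clusters of $\mathcal{A}(k,n)$ provided by the categorification in \cite{JKS16}, together with the specialisation map $s\colon \mathcal{A}(k,n)\to s\mathcal{A}(k,n)$ of Remark~\ref{rem:freeze-coeff}. First I would recall that, under the categorification of \cite{JKS16}, the rank one indecomposable objects $M_I$ of $\mathcal{C}(k,n)$ correspond exactly to the Pl\"ucker coordinates $p_I$ (the map $\psi_{k,n}$ of Remark~\ref{rem:indec}), and that a collection $\{M_I\mid I\in\mathcal{I}\}$ of rank one indecomposables forms a cluster-tilting object precisely when the corresponding collection $\{p_I\mid I\in\mathcal{I}\}$ of Pl\"ucker coordinates forms a cluster of $\mathcal{A}(k,n)$; in both cases the cardinality must be $(k-1)(n-k-1)$ for the non-frozen part, and the frozen/projective-injective summands (the consecutive $k$-subsets) are always included. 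Since the $k$-subsets occurring in a Pl\"ucker frieze of type $(k,n)$ are exactly the almost consecutive ones, the existence of such an $\mathcal{I}$ inside the Pl\"ucker frieze is, on the module side, precisely the content of Proposition~\ref{prop:cluster-in-F}.

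The second step is to transfer this from clusters of $\mathcal{A}(k,n)$ to clusters of $s\mathcal{A}(k,n)$. The algebra homomorphism $s$ sends each consecutive (frozen) Pl\"ucker coordinate to $1$ and fixes the others; since specialising frozen variables to scalars commutes with mutation, a seed of $\mathcal{A}(k,n)$ whose cluster variables are Pl\"ucker coordinates $\{p_I\mid I\in\mathcal{I}\}$ maps under $s$ to a seed of $s\mathcal{A}(k,n)$ whose cluster is $\{s(p_I)\mid I\in\mathcal{I}\}$, where the consecutive ones have become $1$. Conversely, any cluster of $s\mathcal{A}(k,n)$ consisting of (images of) Pl\"ucker coordinates lifts to one of $\mathcal{A}(k,n)$ by the same mechanism. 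Hence a Pl\"ucker frieze of type $(k,n)$ contains $\mathcal{I}$ with $\{p_I\mid I\in\mathcal{I}\}$ a cluster in $s\mathcal{A}(k,n)$ if and only if the associated collection of modules $\bigoplus_{I\in\mathcal{I}}M_I$ is a cluster-tilting object of $\mathcal{C}(k,n)$ (absorbing the frozen/projective-injective summands as needed). Combining the two steps with Proposition~\ref{prop:cluster-in-F} yields the claimed equivalence: this happens exactly when $k=2,3$ with $n$ arbitrary, or $k=4$ and $n=6$.

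The routine work lies in being careful about conventions, and this is where the only real obstacle sits: matching the frozen variables in $\mathcal{A}(k,n)$ (the consecutive Pl\"ucker coordinates) with the projective-injective indecomposables of $\mathcal{C}(k,n)$, and checking that ``cluster'' in $s\mathcal{A}(k,n)$ is interpreted so that the specialised frozen variables $1$ are still counted as part of the seed. With the bijection $\psi_{k,n}$ and the compatibility of $s$ with mutation in hand, there is nothing deeper to prove; the statement is essentially a reformulation of Proposition~\ref{prop:cluster-in-F} in the language of the cluster algebra $s\mathcal{A}(k,n)$ rather than the cluster category $\mathcal{C}(k,n)$.
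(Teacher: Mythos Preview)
Your proposal is correct and follows the same approach as the paper: the corollary is presented there without proof, simply introduced by ``In terms of cluster algebras we get the following,'' since it is an immediate translation of Proposition~\ref{prop:cluster-in-F} via the JKS16 dictionary between cluster-tilting objects of $\mathcal{C}(k,n)$ and clusters of $\mathcal{A}(k,n)$. Your elaboration of the passage from $\mathcal{A}(k,n)$ to $s\mathcal{A}(k,n)$ via the specialisation $s$ is more careful than what the paper supplies, but the underlying idea is identical.
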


We can apply this to determine when the specialization $\varphi_{\underline{x}}$ of a cluster $\underline{x}$ to $(1, \ldots, 1)$ uniquely determines a specialized Pl\"ucker frieze.

\begin{lem} \label{Lem:specialization-inj}
Let $k \leq 3$ and $n$ arbitrary or let $k=4$ and $n=6$. Let $\underline{x}$ and $\underline{x}'$ be two clusters of Pl\"ucker variables. 
Then $\varphi_{\underline{x}}(s\mathcal{P}_{(k,n)})=\varphi_{\underline{x}'}(s\mathcal{P}_{(k,n)})$ 
if and only if 
$\underline{x}$ is a permutation of $\underline{x}'$. 
\end{lem}

\begin{proof}
The assumptions on $k$ and $n$ guarantee that the Pl\"ucker frieze $\mathcal{P}_{(k,n)}$ contains a cluster $\underline{a}$ of almost consecutive Pl\"ucker coordinates, see Prop.~\ref{prop:cluster-in-F} 
and Cor.~\ref{cor:cluster-in-F-algebra}. \\ 
If $\underline{x}$ is a permutation of $\underline{x}'$ then 
$\varphi_{\underline{x}} = \varphi_{\underline{x}'}$, which shows the reverse implication. 
On the other hand, assume
that there are two clusters $\underline{x}=(x_1, \ldots, x_N)$ and $\underline{x}'=(x'_1, \ldots, x'_N)$ of Pl\"ucker coordinates such that $\varphi_{\underline{x}}(s\mathcal{P}_{(k,n)})=\varphi_{\underline{x}'}(s\mathcal{P}_{(k,n)})$, where $N=(k-1)(n-k-1)+n$ is the cardinality of a cluster in $\mathcal A(k,n)$. 
Since $\underline{x}$ and $\underline{a}$ are clusters, there exist Laurent polynomials 
$Q_i^{\underline{x},\underline{a}}$ such that $a_i=Q_i^{\underline{x},\underline{a}}(x_1, \ldots, x_N)$. 
On the other hand, there also exist exchange Laurent polynomials $Q_i^{\underline{a},\underline{x}}$ such that $x_i=Q_i^{\underline{a},\underline{x}}(a_1, \ldots, a_N)$ and $Q^{\underline{a},\underline{x}}(Q^{\underline{x},\underline{a}}(\underline{x}))=\underline{x}$ (where we understand $Q^{\underline{x},\underline{a}}$ as the vector $(Q_1^{\underline{x},\underline{a}}, \ldots, Q_N^{\underline{x},\underline{a}})$). Similarly we can find $Q^{\underline{x}',\underline{x}}$ and $Q^{\underline{x},\underline{x}'}$. \\
By definition $\varphi_{\underline{x}}(\underline{x})=(1,\ldots, 1)$, and $\varphi_{\underline{x}'}(\underline{x}')=(1, \ldots, 1)$, and in particular, the entries of both evaluations agree on the cluster $\underline{a}$ in $s\mathcal{P}_{(k,n)}$. This means:
\begin{equation} \label{eq:evalx}
\varphi_{\underline{x}}(\underline{a})=\varphi_{\underline{x}}(Q^{\underline{x},\underline{a}}(\underline{x}))=Q^{\underline{x},\underline{a}}(1, \ldots, 1) \ . \end{equation}
On the other hand
\begin{equation} \label{eq:evalxstrich}
\varphi_{\underline{x}'}(\underline{a})=\varphi_{\underline{x}'}(Q^{\underline{x},\underline{a}}(Q^{\underline{x}',\underline{x}}(\underline{x}')))=Q^{\underline{x},\underline{a}}(Q^{\underline{x}',\underline{x}}(1, \ldots, 1)) \ . 
\end{equation}
Since $Q^{\underline{a},\underline{x}}(Q^{\underline{x},\underline{a}}(1, \ldots, 1))=(1,\ldots, 1)$, applying $Q^{\underline{a},\underline{x}}$ to \eqref{eq:evalx} and \eqref{eq:evalxstrich} yields 
$$(1, \ldots, 1)=Q^{\underline{a},\underline{x}}(Q^{\underline{x},\underline{a}}(Q^{\underline{x}',\underline{x}}(1, \ldots, 1)))=Q^{\underline{x}',\underline{x}}(1, \ldots, 1) \ . $$

Note that, while we have defined the cluster algebra $\mathcal A (k,n)$ as a $\mathbb{C}$-algebra, we have in fact 
\[
	\mathcal A (k,n) = \mathbb{C} \otimes_{\mathbb{C}} \mathcal A_\mathbb{Z} (k,n),
\]
where $\mathcal A_{\mathbb{Z}}(k,n)$ is a cluster algebra of geometric type, defined as the subring of $\mathbb{Q}(x_1, \ldots, x_N)$ generated by the cluster variables. Assume now as a contradiction that $\underline{x}$ is not a permutation of $\underline{x}'$. 
Then there is an $1 \leq i \leq N$ such that $x_i \notin \{x'_1, \ldots, x'_N\}$ and
\[
	x_i = Q_i^{\underline{x}',\underline{x}}(x'_1, \ldots, x'_N) = \frac{f(x'_1, \ldots, x'_N)}{{x'}_1^{d_1} \cdots {x'}_N^{d_N}},
\]
with $x'_j \nmid f(\underline{x}')$ 
for all $1 \leq j \leq N$. By positivity \cite{GHKK, LS2015} this is a Laurent polynomial over $\mathbb{Z}_{\geq 0}$. Therefore, $Q_i^{\underline{x}',\underline{x}}(x'_1, \ldots, x'_N) = 1$ implies that $f$ is the constant polynomial $f = 1$ and
\[
	x_i = \frac{1}{{x'}_1^{d_1}\cdots {x'}_N^{d_N}}.
\]
However, by positivity of $d$-vectors, which was shown for skew-symmetric cluster algebras by \cite[Thm.~1.2]{CaoLin}, we have $d_j \geq 0$ for all $1 \leq j \leq N$ and so $x_i$ is a unit in $\mathcal A_{\mathbb{Z}}(k,n)$; contradicting \cite[Theorem 1.3]{GLSFactorial}.
\end{proof}

%
\subsection{Mesh friezes} 

\ 

Assume $k\le n/2$. 
We recall that the Grassmannian cluster category $\cC(k,n)$ is of finite type if and only if either  
$k=2$, or $k=3$ and $n\in \{6,7,8\}$ 
(Section~\ref{sec:grassm}). 

\begin{definition}\label{def:mesh-frieze} 
Let $\cC(k,n)$ be of finite type. 
A {\em mesh frieze} $\mathcal{F}_{k,n}$ for the 
Grassmannian cluster category $\cC(k,n)$ is 
a collection of positive integers, one for each indecomposable object in $\cC(k,n)$ such 
the $\mathcal{F}_{k,n}(P)=1$ for every indecomposable projective-injective $P$ and such 
that all mesh relations on the AR-quiver of $\cC(k,n)$ evaluate to 1 thus, 
whenever we have an AR-sequence 
\begin{equation}\label{ar-triangle}
0\to A\to \bigoplus_{i=1}^I B_i \to C \to 0,
\end{equation}
where the $B_i$ are indecomposable, and $I \geq 1$, we get
\[
	\mathcal{F}_{k,n}(A)\mathcal{F}_{k,n}(C) = \prod_{i=1}^I \mathcal{F}_{k,n}(B_i) + 1.
\]
(Note that $\cC(k,n)$ has Auslander-Reiten sequences and an Auslander-Reiten quiver, cf.\ for example \cite[Remark~3.3 ]{JKS16}.)
\end{definition}

\begin{ex}\label{ex:mesh-sl2}
In case $k=2$, a mesh frieze $\mathcal{F}_{2,n}$ for $\cC(2,n)$ is an integral frieze pattern 
(or an integral $\SL_2$-frieze). In 
that case, the notion of mesh friezes and of integral $\SL_2$-friezes coincide. 
Such friezes are in bijection with triangulations of an $n$-gon and hence 
arise from specialising a cluster to $1$, see Example~\ref{ex:SL2}. 
\end{ex}

\begin{remark} \label{rem:mesh-ARS}
Note that \cite{ARS} introduce the notion of `friezes of Dynkin type'.  It coincides with certain
mesh friezes (on the stable version $\underline{\cC(k,n)}$) in types $A_n, D_4, E_6, E_8$.  
\end{remark}

\begin{remark}\label{rem:category-algebra} 
Recall that $\cC(k,n)$ is a categorification of the cluster algebra $\mathcal A(k,n)$. 
Furthermore, the mesh frieze has 1's at the projective-injective objects, i.e., the 
objects for consecutive $k$-subsets. 
In that sense, any mesh frieze $\mathcal{F}_{k,n}$ for $\cC(k,n)$ is also a mesh frieze associated 
to the cluster algebra $s\mathcal A(k,n)=\mathcal A(k,n)/J$ with coefficients specialized to 1, 
cf. Remark~\ref{rem:freeze-coeff}. 
The requirement that mesh relations evaluate to 1 translates to requiring that 
certain exchange relations in the cluster algebra hold. 
We will use both points of view. 
The approach to friezes via $\cC(k,n)$ is particularly useful when we want to check for 
the existence of cluster-tilting objects in the $\SL_k$-frieze. It will also 
be of importance in Section~\ref{sec:IY-reduction}
as we want to use Iyama--Yoshino reductions on these categories to analyse frieze patterns. 
\end{remark}

\begin{remark}\label{R:mesh frieze cluster relations}
Note that the entries in a mesh frieze satisfy the exchange relations of the associated cluster algebra $\mathcal{A}(k,n)$. That is, denoting by $x_N$ the cluster variable in 
$\mathcal{A}(k,n)$ associated to an indecomposable object $N$ in $\cC(k,n)$ the following holds: Whenever we have an exchange relation
\[
	x_Ax_C = \prod_{i=1}^{\tilde{m}} x_{\tilde{B}_i} + \prod_{j=1}^{\tilde{m}'}x_{\tilde{B}'_j}
\]
in $\mathcal A(k,n)$, then
\[
	\mathcal{F}(A) \mathcal{F}(B) = \prod_{i=1}^{\tilde{m}} \mathcal{F}({\tilde{B}_i}) + \prod_{j=1}^{\tilde{m}'}\mathcal{F}({\tilde{B}'_j}).
\]
Indeed, this follows from the fact that a mesh frieze is determined uniquely by the values of one cluster tilting object. Thus, we may take an acyclic cluster that forms a slice because $k=2$ and $n$ is arbitrary or $k=3$ and $n=6,7,8$, and then use sink/source mutations to determine all entries in the mesh frieze uniquely.  Now this exact same cluster also determines values on the mesh frieze uniquely using arbitrary mutations of the cluster algebra. Thus, these two approaches should yield the exact same mesh frieze, which implies that the entries in the mesh frieze satisfy all relations of the cluster algebra, not just the ones coming from sink/source mutations.
\end{remark}

%
\subsection{Mesh friezes for $\cC(k,n)$ yield $\SL_k$-friezes}\label{sec:mesh-SL}

\ 

In this section, we show the correspondence between integral tame $\SL_k$-friezes 
and mesh friezes in the finite type cases. For $k=2$, these two notions 
coincide (see Example~\ref{ex:mesh-sl2}). So for the rest 
of this section, we assume $k=3$ and $n\in \{6,7,8\}$. The width of such an $\SL_3$-frieze 
is $n-4$, see Remark~\ref{rem:width-period}. 

The following lemma shows that the entries in a tame $\SL_3$-frieze are determined by a cluster of $\mathcal{A}(k,n)$, which consists of consecutive Pl\"ucker coordinates.

\begin{lem}\label{lm:cluster-SL3-determined}
Let $F_{3,n}$ with  
$n \in \{6,7,8\}$ be a tame integral $\SL_3$-frieze. 
Then the entries of $F_{3,n}$ are uniquely determined by the values on the $2w$ entries $x_1=m_{00},x_2=m_{01}, \dots, x_w=m_{0,n-5}$ and 
$x_{w+1}=m_{2,n-3}, \dots, x_{2w}=m_{n-3,n-3}$.  \\
See below the picture for $(3,7)$. \\
\[   
\xymatrix@C=.08em@R=.06em{
   & 0 && 0 &&  0 && 0 && 0 && 0 && 0 \\
   0 && 0 &&  0 && 0 && 0 && 0 && 0 \\
 & 1 & &1 & & 1 &  & 1 &  & 1 &  & 1 & & 1 \\
 m_{22} & & m_{33} && {\color{red} x_6} && m_{55} && m_{66} &&  {\color{red} x_1} && m_{11} & \\ 
& m_{23} &&  {\color{red} x_5} && m_{45} && m_{56} && m_{60} &&  {\color{red} x_2} && m_{12}   \\
 m_{13} & &  {\color{red} x_4} && m_{35} && m_{46} && m_{50} && m_{61} &&  {\color{red} x_3} \\ 
  & 1 & &1 & & 1 &  & 1 &  & 1 &  & 1 & & 1 \\
     0 && 0 &&  0 && 0 && 0 && 0 && 0 \\
     & 0 && 0 &&  0 && 0 && 0 && 0 && 0 \\
}
\]

\end{lem}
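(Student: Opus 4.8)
The plan is to show that the $2w$ prescribed entries $x_1, \ldots, x_{2w}$ — which form the first nontrivial row starting at position $m_{00}$ and going $w$ steps to the right, together with a shifted segment of the last nontrivial row — correspond to a set of Plücker coordinates in $\mathcal{A}(3,n)$ that forms a cluster, and then invoke the tameness of $F_{3,n}$ to propagate these values to all remaining entries. First I would identify the positions explicitly: in the special Plücker frieze $s\mathcal{P}_{(3,n)}$ (cf.\ Example~\ref{ex:special-2-5}, the $(3,6)$ case), the entries $m_{0j}$ in the top nontrivial row are $p_{o([1]^2, \, \cdot\,)} = p_{1,2,\cdot}$ for varying last index, and the entries in the bottom nontrivial row are of the form $p_{s,s+1,1}$; comparing with the two families appearing in the proof of Proposition~\ref{prop:cluster-in-F}, namely $\{1,2,m\}_{m=4,\ldots,n-1}$ and $\{s,s+1,1\}_{s=3,\ldots,n-2}$, shows that the $x_i$ are precisely (the specialisations of) the Plücker coordinates in that explicit cluster $\underline{a}$ of almost consecutive coordinates. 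So by Corollary~\ref{cor:cluster-in-F-algebra} these $2w$ positions carry a genuine cluster of $s\mathcal{A}(3,n)$.

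Next I would show that the values on these positions determine everything else. The key tool is tameness: every $4\times 4$ diamond has determinant $0$, which (expanding along a row or column against the surrounding $3\times 3$ minors, all of which are $1$ by the $\SL_3$ condition) gives a linear recurrence expressing any entry of the frieze in terms of its neighbours — concretely, an entry on one antidiagonal is an integer-coefficient combination of the three adjacent entries on neighbouring antidiagonals. Starting from the two given rows, one checks that this recurrence, together with the boundary rows of $1$'s and $0$'s, propagates uniquely: the given data occupies two "slices" of the AR-quiver of type $E_6$ (resp.\ $D_4$, $E_8$) that together form a cluster-tilting configuration, and a mesh/frieze of Dynkin type is determined by its values on any cluster-tilting slice via sink/source mutations (equivalently, by the mesh relations, which here are exactly the $\SL_3$-diamond conditions; cf.\ Remark~\ref{rem:mesh-ARS} and Remark~\ref{R:mesh frieze cluster relations}). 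Since tameness forces the frieze to satisfy precisely those propagation relations, the entries are uniquely reconstructed.

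Concretely I would argue by a sweeping induction: order the antidiagonals of $F_{3,n}$; the two prescribed partial rows, plus the flanking $1$-rows, pin down enough entries on a band of consecutive antidiagonals that each subsequent entry is the unique solution of a $3\times3$-diamond-determinant-$=1$ equation (linear in the unknown, with leading coefficient $1$ coming from a neighbouring $3\times3$ minor) or of the vanishing $4\times4$-determinant relation. One must check that at no point does the recurrence require dividing — this is where the specific choice of the $2w$ positions matters, and it is the analogue of the fact that the chosen $\underline{a}$ is an honest cluster rather than an arbitrary collection of coordinates.

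The main obstacle I expect is bookkeeping the geometry of which entries are forced in which order: one has to verify that the union of the two prescribed segments really does meet every mesh relation "from a determined side", i.e.\ that it is a transversal slice of the AR-quiver, and handle the three cases $n=6,7,8$ (shapes $D_4$, $E_6$, $E_8$) — either uniformly via the cluster-category interpretation and the general principle that a Dynkin frieze is determined by a cluster-tilting slice, or by an explicit diagram chase as hinted by the $(3,7)$ picture. Invoking Corollary~\ref{cor:cluster-in-F-algebra} to know a priori that these positions support a cluster is what lets us bypass a case-by-case combinatorial verification of transversality.
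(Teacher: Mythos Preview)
Your proposal has a genuine circularity problem. You want to invoke the mesh-frieze/cluster-tilting machinery (Remark~\ref{R:mesh frieze cluster relations}, the principle that a Dynkin mesh frieze is determined by a cluster-tilting slice) to conclude that the values on the $2w$ positions determine the rest. But at this point in the paper we do \emph{not} know that an arbitrary tame integral $\SL_3$-frieze $F_{3,n}$ is a mesh frieze, nor that its entries satisfy the cluster-algebra exchange relations of $\mathcal{A}(3,n)$. That identification is precisely what Proposition~\ref{prop:SL-mesh} and Corollary~\ref{cor:mesh-SLk} establish, and Corollary~\ref{cor:mesh-SLk} uses the present lemma for injectivity. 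Knowing (via Corollary~\ref{cor:cluster-in-F-algebra}) that the corresponding \emph{positions in the Pl\"ucker frieze} carry a cluster does not help: you would still need to know that the entries of your given $\SL_3$-frieze obey the Pl\"ucker relations, which is not an a priori consequence of the $\SL_3$ and tameness conditions.

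Your fallback, the direct propagation argument in the third paragraph, is in spirit exactly what the paper does --- but your description of it is not right. You claim the unknown entry appears with ``leading coefficient $1$ coming from a neighbouring $3\times 3$ minor'' and that ``at no point does the recurrence require dividing''. In fact the paper's proof divides repeatedly: e.g.\ from the vanishing $4\times 4$ determinant one gets $m_{35}=(x_1+x_5)/x_4$, and later a $2\times 2$ linear system with determinant $-x_5$ must be inverted. The divisions are legitimate only because the entries of an integral $\SL_3$-frieze are positive (hence nonzero); the specific choice of the $2w$ positions is what makes the successive eliminations go through, but not because the coefficients are $1$. The paper's proof is a concrete, case-by-case computation: extend the frieze vertically by (anti-)periodicity, then use the vanishing $4\times 4$ diamonds and the $3\times 3$ diamonds to solve for the remaining $m_{ij}$ one or two at a time, checking that each step involves division by a known nonzero entry. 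For $n=6,8$ the paper simply asserts analogous computations.
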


\begin{proof}  We show the assertion for $F_{3,7}$. Here $w=3$. Assume that the values of the entries $(x_1, \ldots, x_6)$ are given. We first complete the frieze by repeating the entries along the diagonals (according to the (anti-)periodicity rule given in \cite{mgost14, Cuntz}), 
that is, the frieze becomes infinite also in the vertical directions. See below the picture for $F_{3,7}$: 
\[   
\xymatrix@C=.08em@R=.06em{
  & & \vdots && \vdots && \vdots && \vdots &&  \vdots && \vdots & \\ 
\dots & 1 & &1 & & 1 &  & 1 &  & 1 &  & 1 & & 1 & \dots \\
 m_{22} & & m_{33} && {\color{red} x_6} && m_{55} && m_{66} &&  {\color{red} x_1} && m_{11} & \\ 
& m_{23} &&  {\color{red} x_5} && m_{45} && m_{56} && m_{60} &&  {\color{red} x_2} && m_{12}   \\
 m_{13} & &  {\color{red} x_4} && m_{35} && m_{46} && m_{50} && m_{61} &&  {\color{red} x_3} \\ 
  & 1 & &1 & & 1 &  & 1 &  & 1 &  & 1 & & 1 \\
   0 && 0 &&  0 && 0 && 0 && 0 && 0 \\
   & 0 && 0 &&  0 && 0 && 0 && 0 && 0 \\
      1 & &1 & & 1 &  & 1 &  & 1 &  & 1 & & 1 \\
      & m_{66} &&  {\color{red} x_1} && m_{11} && m_{22} && m_{33} &&  {\color{red} x_6} && m_{55}   \\
       m_{56} & & m_{60} &&  {\color{red} x_2} && m_{12} && m_{23} &&  {\color{red} x_5} && m_{45} & \\ 
             & m_{50} && m_{61} &&  {\color{red} x_3} && m_{13} &&  {\color{red} x_4} && m_{35} && m_{46}   \\
\dots & & 1 && 1 && 1 && 1 && 1 && 1 &&  \dots \\
 & \vdots & &\vdots & & \vdots &  & \vdots &  & \vdots &  & \vdots & & \vdots &  \\
}
\]

 Then  we can successively compute the remaining entries $m_{ij}$ of the frieze from the $x_i$'s using the tameness condition, in particular the two rows of $0$s for calculating the $4\times4$ determinants. First compute $m_{35}$ from the determinant of the $4 \times 4$ matrix $A=\begin{pmatrix}  0 & 1 & x_4 & x_5 \\ 0 & 0 & 1 & m_{35} \\ 1 & 0 & 0 & 1 \\ x_1 & 1 & 0 &0 \end{pmatrix}$: since by tameness $\det(A)=0$, this yields $m_{35}=\frac{x_1 + x_5}{x_4}$. Moreover from the determinant of the matrix $\begin{pmatrix}  0 & 0 & 1 & m_{35} \\ 1 & 0 & 0 & 1  \\ x_1 &1 & 0 & 0 \\x_2&  m_{11} & 1 & 0 \end{pmatrix}$ we obtain $m_{11}=\frac{x_2 + m_{35}}{x_1}$. Similarly, we obtain $m_{61}=\frac{x_2 + x_6}{x_3}$ and $m_{33}=\frac{x_5 + m_{61}}{x_6}$. Next we obtain $m_{45}, m_{46}$ from the determinants of $\begin{pmatrix}  0 & 1 & m_{35} & m_{45} \\ 0 & 0 & 1 & m_{46} \\ 1 & 0 & 0 & 1 \\ m_{11} & 1 & 0 &0 \end{pmatrix}$ and $\begin{pmatrix}  x_4 & x_5 & x_6 \\ 1 & m_{35} & m_{45}  \\   0 & 1 & m_{46} \end{pmatrix}$: These two determinants yield the system of linear equations: 
\[ \underbrace{\begin{pmatrix} 1 & -m_{35} \\ -x_4 & (x_4 m_{35} -x_5) \end{pmatrix}}_{B}\begin{pmatrix} m_{45} \\ m_{46} \end{pmatrix} = \begin{pmatrix} -m_{11} \\ 1-x_6 \end{pmatrix} \ . \]
This system has a unique solution $(m_{45}, m_{46})$, since the determinant of the matrix $B$ equals $-x_5 \neq 0$. Similarly, we may also uniquely determine $m_{50}$ and $m_{60}$. The remaining entries may be calculated entirely by using determinants of the $3 \times 3$ diamonds.  
\\
The assertion for the cases $(3,6)$ and $(3,8)$ are proven by analogous tedious calculations. 
\end{proof}

\begin{prop}\label{prop:SL-mesh}
Let $w\in \{2,3,4\}$ and $n=w+4$. 
For every tame integral $\SL_3$-frieze of width $w$ there exists a  
mesh frieze on $\cC(3,n)$. 
\end{prop}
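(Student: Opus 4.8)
The plan is to extract from $F_{3,n}$ a cluster-tilting object of $\cC(3,n)$ together with a value on each of its summands, to specialise the cluster algebra $s\mathcal A(3,n)$ accordingly so as to get a value on every indecomposable, and then to check that this assignment is a mesh frieze. Concretely: let $F=F_{3,n}$ be a tame integral $\SL_3$-frieze of width $w=n-4$. By Lemma~\ref{lm:cluster-SL3-determined} it is determined by the $2w$ positive integers $x_1=m_{00},\dots,x_w=m_{0,n-5}$ and $x_{w+1}=m_{2,n-3},\dots,x_{2w}=m_{n-3,n-3}$. These entries sit at positions which in the special Pl\"ucker frieze $s\mathcal P_{(3,n)}$ carry almost consecutive Pl\"ucker coordinates $p_{I_1},\dots,p_{I_{2w}}$ forming a slice of the Auslander--Reiten quiver; since $2w=(k-1)(n-k-1)$ for $k=3$, Proposition~\ref{prop:cluster-in-F} and Corollary~\ref{cor:cluster-in-F-algebra} show that $\{p_{I_1},\dots,p_{I_{2w}}\}$ is a cluster of $s\mathcal A(3,n)$, so $T:=\bigoplus_{j=1}^{2w}\psi_{3,n}(p_{I_j})$ is a cluster-tilting object of $\cC(3,n)$. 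Let $\varphi\colon s\mathcal A(3,n)\to\CC$ be the specialisation of $(p_{I_1},\dots,p_{I_{2w}})$ to $(x_1,\dots,x_{2w})$, and for an indecomposable $N$ of $\cC(3,n)$ write $x_N$ for the associated cluster variable of $s\mathcal A(3,n)$ (with $x_{M_I}=p_I$ on rank one objects, cf.\ Remark~\ref{rem:indec}). Set $\mathcal F_{3,n}(N):=\varphi(x_N)$.

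Next I would verify the mesh-frieze axioms of Definition~\ref{def:mesh-frieze} for $\mathcal F_{3,n}$. If $P$ is indecomposable projective-injective then $x_P$ is a consecutive Pl\"ucker coordinate, hence equals $1$ in $s\mathcal A(3,n)$, so $\mathcal F_{3,n}(P)=1$. Every cluster variable is mapped by $\varphi$ to a positive rational number, by the Laurent phenomenon and positivity (Remark~\ref{R:image positive integers}). Finally, any Auslander--Reiten sequence $0\to A\to\bigoplus_i B_i\to C\to 0$ in $\cC(3,n)$ corresponds to an exchange relation $x_Ax_C=\prod_i x_{B_i}+1$ in $s\mathcal A(3,n)$ — on a slice the Auslander--Reiten mutations are sink/source mutations, whose exchange relations have constant term $1$, and this propagates to all meshes as in the discussion of Remark~\ref{R:mesh frieze cluster relations}. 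Applying the algebra homomorphism $\varphi$ yields $\mathcal F_{3,n}(A)\mathcal F_{3,n}(C)=\prod_i\mathcal F_{3,n}(B_i)+1$, which is exactly the mesh relation. Hence $\mathcal F_{3,n}$ is a mesh frieze as soon as we know its values are positive \emph{integers}.

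It remains to establish integrality. On the rank one objects $M_I$ with $I$ almost consecutive — those appearing in the frieze of modules $\mathcal M_{(3,n)}$ — one has $\mathcal F_{3,n}(M_I)=\varphi(p_I)$, which coincides with the entry of $F$ at the corresponding position: $s\mathcal P_{(3,n)}$ satisfies the determinant relations of Theorem~\ref{thm:frieze}, $\varphi$ preserves them, so both $\varphi(p_I)$ and the entry of $F$ arise from the common initial data $x_1,\dots,x_{2w}$ by the recursion in the proof of Lemma~\ref{lm:cluster-SL3-determined}; thus they agree and $\mathcal F_{3,n}(M_I)$ is a positive integer. There remain the finitely many indecomposables of $\cC(3,n)$ not in $\mathcal M_{(3,n)}$: for $n=6$ these are $M_{135}$, $M_{246}$ and the two rank two modules; for $n=7$ the seven rank one modules $M_I$ with $I$ not almost consecutive, together with the fourteen rank two modules; for $n=8$ the analogous rank one, two and three modules. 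Since $s\mathcal A(3,n)$ is generated by Pl\"ucker coordinates, each such $x_N$ is a polynomial in the $p_I$; and for $n\in\{6,7,8\}$ one checks that every Pl\"ucker coordinate is in turn a polynomial with integer coefficients in the almost consecutive ones once the consecutive ones are set to $1$ — for instance, for $n=6$ the Pl\"ucker relation on $I=(3,5)$, $J=(1,2,3,4)$ gives, in $s\mathcal A(3,6)$, $p_{135}=p_{134}\,p_{235}-1$, and symmetrically $p_{246}=p_{245}\,p_{346}-1$, from which the rank two cluster variables follow as well. Running through the remaining cases exhibits every $x_N$ as a polynomial in almost consecutive Pl\"uckers, so $\mathcal F_{3,n}(N)=\varphi(x_N)$ is a polynomial in the positive integer entries of $F$, hence a positive integer (nonzero because $x_N\neq 0$). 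This completes the construction of the mesh frieze.

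The crux — and the place where I expect the real work — is precisely this last integrality step: a priori $\varphi$ only produces positive \emph{rationals}, and an arbitrary integral specialisation of a cluster need not give an integral frieze. It is the hypothesis that \emph{all} entries of $F$ are integral, together with the finite-type restriction $n\in\{6,7,8\}$, that pins down the remaining values as integers, and verifying this is a finite but increasingly laborious case check (most involved for $n=8$, in the same computational spirit as the appendix). An equivalent, purely combinatorial route would propagate the $x_j$ directly along the Auslander--Reiten quiver using the mesh relations, as in the proof of Lemma~\ref{lm:cluster-SL3-determined}; but there the sink/source steps have the shape $\mathcal F(C)=\bigl(\prod_i\mathcal F(B_i)+1\bigr)/\mathcal F(A)$, so exactly the same integrality input is needed to rule out denominators surviving.
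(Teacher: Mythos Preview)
Your proposal is correct and follows essentially the same route as the paper: extract a cluster of almost consecutive Pl\"ucker coordinates from the frieze positions (via Proposition~\ref{prop:cluster-in-F}), specialise to the given integer values to obtain positive rationals on the AR-quiver satisfying the mesh relations, and then verify integrality of the remaining entries case by case using Pl\"ucker relations (the paper does this separately for $n=6,7,8$, invoking the Conway--Coxeter identity from \cite{BPT} for the $\tau$-orbits at the long arm and explicit Pl\"ucker relations such as your $p_{135}=p_{134}p_{235}-1$ for the rest). One terminological slip worth fixing: for $n=7,8$ the $2w$ almost consecutive rank-one modules occupy only two of the $\tau$-orbits (nodes $1,2$ resp.\ $1,8$ in the labelling of Figure~\ref{fig:Dynkin}) and hence do \emph{not} form a slice of the AR-quiver of $\cC(3,n)$---but they do form a cluster, which is all your argument actually needs, so the justification of the mesh relations should rest directly on the categorification (as the paper does) rather than on sink/source reasoning from a slice.
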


\begin{proof}
Let $F=F_{3,n}$ be a $\SL_3$-frieze of horizontal period $n=w+4$, for $w\in \{2,3,4\}$. 
Consider  
the Pl\"ucker frieze $\mathcal{P}(k,n)$ of type $(3,n)$. It has the same width and period as $F$, and we can overlay $\mathcal{P}(k,n)$ on top of $F$ such that every $p_I$ for an almost consecutive $3$-subset $I$ of $[1,n]$ comes to lie on top of an entry in $F$. (Note that a choice is involved here, the way to overlay $F$ by $\mathcal{P}(k,n)$ is not unique.) This assigns to each entry $p_I$ in the Pl\"ucker frieze $\mathcal{P}(k,n)$ an integer $f(p_I)$ lying underneath $p_I$.
By Proposition~\ref{prop:cluster-in-F}, the Pl\"ucker frieze $\mathcal{P}(k,n)$ contains a 
collection of entries $\{p_I \mid I \in \mathcal{I}\}$ such that $\bigoplus_{I \in \mathcal{I}} M_I$ is 
a cluster-tilting object of 
$\cC(3,n)$.
Its image under $\psi_{3,n}^{-1}$ 
is a cluster in $\mathcal A(3,n)$ 
(cf. Corollary~\ref{cor:cluster-in-F-algebra}). 
Specializing the indecomposable object $M_I$ for $I \in \mathcal{I}$ to $f(p_I)$ and calculating the remaining entries using the mesh relations yields a mesh frieze  
on the AR-quiver of $\mathcal C(3,n)$: 
By construction, the mesh relations are satisfied and 
as explained in 
Remark~\ref{R:image positive integers} all its entries are positive rationals. 
It remains to show that the entries are all integers. For this, we associate positive 
integers from $F$ to their positions in the corresponding Auslander-Reiten quiver/mesh 
frieze and use the mesh relations to see that all the other entries are sums and differences 
of products of these given positive integers. That proves the claim. 
We include pictures of the Auslander-Reiten quivers of these three categories along the way. 

(i) 
Let $w=2$ and $n=6$. 
The Auslander--Reiten quiver of $\cC(3,6)$ is shown in Figure~\ref{fig:AR36}. The indecomposables 
modules are indicated by black circles. Among them, the one corresponding 
to indecomposable rank 1 modules (or Pl\"ucker coordinates) 
appear with their label. The remaining two nodes correspond to rank 2 modules. 

The entries of $F_{3,6}$ yield positive integers in the 
$\tau$-orbits of the nodes $1$, $3$ and $4$ 
(see Figure~\ref{fig:Dynkin})
of the Auslander-Reiten quiver of $\cC(3,6)$, as this is 
where the objects indexed by almost consecutive $3$-subsets sit. 
To compute the entries of the $\tau$-orbit of node 2, we use the mesh relations. 
Here, they are $\SL_2$-relations. They involve the $\tau$-orbit of node 1 and the $1$'s above, 
if present. In particular, every entry in the $\tau$-orbit of node 2 is of the form 
$d=\frac{ab-1}{1}=ab-1$, for $a$ and $b$ in the $\tau$-orbit of 
node 1. So $d\in \ZZ$ and we have a mesh frieze. 
The uniqueness follows since the entries in $F_{3,6}$ are uniquely determined 
by the choice of the entries corresponding to a cluster. 

\begin{figure}
\[
\includegraphics[width=8cm]{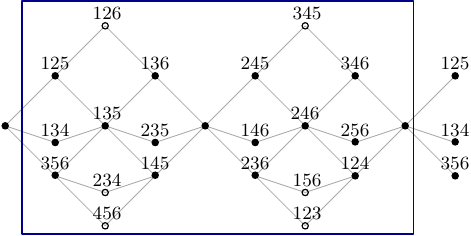}
\]
\caption{Auslander-Reiten quiver of $\cC(3,6)$. Projective-injectives are drawn as white circles.}\label{fig:AR36}
\end{figure}

(ii) 
The 
Auslander--Reiten quiver of $\cC(3,7)$ is in Figure~\ref{fig:AR37}. 
The unlabelled nodes correspond to rank 2 modules. 
Any integer $\SL_3$-frieze $F=F_{3,7}$ of width 3 yields positive integers in the 
$\tau$-orbits 
of the nodes $1$ (or $6$) and $2$, as the almost consecutive labels are exactly in 
these $\tau$-orbits. (Note that the $\tau$-orbits of node 1 and node 6 coincide). 
Using the frieze relation 
from Conway-Coxeter (see Equation (2) in \cite{BPT}) 
one can write all entries of the mesh frieze as subtractions of multiples of the entries in the 
$\tau$-orbit of node 1. So as in (i), all entries are integers and we have a mesh frieze.

(iii) 
Let $F=F_{3,8}$ be an integral $\SL_3$-frieze of width 4. Its entries yield 
positive integers in the $\tau$-orbits of the nodes $1$ and $8$ 
of the Auslander-Reiten quiver of $\cC(3,8)$ (see Figure~\ref{fig:AR38}). 
The unlabelled nodes are rank 2 and rank 3 modules. 
Observe that only 32 of the entries in the mesh frieze arise from the $\SL_3$-frieze in this case: 
The entries in the first and fourth row of $F$ form the $\tau$-orbit of 8, the entries of the second and 
third row the $\tau$-orbit of 1.
Using the frieze relation 
from Conway-Coxeter one can write all entries in the $\tau$-orbits of the nodes $3,4,5,6,7$ 
of the mesh frieze of $\cC(3,8)$ 
as subtractions of multiples of entries in the $\tau$-orbits above/below them 
(see Equation (2) in \cite{BPT}).
So again, 
all these entries are integers. To see that the entries in the $\tau$-orbit of node 2 are 
integers, use the Pl\"ucker relations. For example, to see that the entry for 
$I=\{2,5,7\}$ is an integer, we use the Pl\"ucker relations (\ref{eq:plucker-new}) 
for $(i_1,i_2)=(5,6)$ and $(j_0,\dots,j_3)=(2,4,5,7)$: 
$p_{562}p_{457} - p_{564}p_{257} + p_{565}p_{247} - p_{567}p_{245}=0$. This 
yields $p_{257}=p_{256}p_{457} - p_{567}p_{245}= p_{256}p_{457} - 1$ 
with $p_{256}\in\ZZ$, $p_{457}\in\ZZ$ (as they are Pl\"ucker coordinates of almost 
consecutive $3$-subsets). 
\end{proof}

\begin{figure}
\includegraphics[width=10cm]{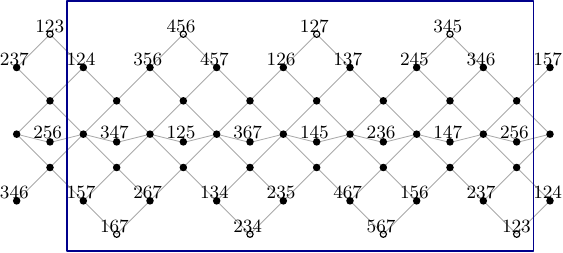}
\caption{Auslander-Reiten quiver of $\cC(3,7)$.}\label{fig:AR37}
\end{figure}

\begin{figure}
\[
\includegraphics[width=16cm]{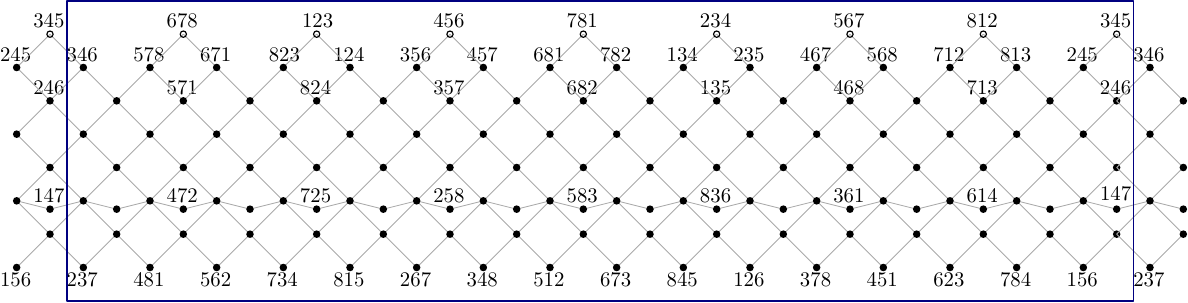}
\]
\caption{Auslander-Reiten quiver of $\cC(3,8)$.}\label{fig:AR38}
\end{figure}

\begin{cor}\label{cor:mesh-SLk}
Let $n\in \{6,7,8\}$. 
There is a bijection between mesh friezes for $\cC(3,n)$ and integral tame $\SL_3$-friezes of 
width $n-4$. 
\end{cor}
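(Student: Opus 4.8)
The plan is to construct the two directions of the bijection separately and check they are mutually inverse. In one direction, given a mesh frieze $\mathcal{F}_{3,n}$ for $\cC(3,n)$, we restrict it to the $\tau$-orbits of the nodes whose objects are indexed by almost consecutive $3$-subsets; by the discussion in Section~\ref{sec:frieze-mesh} these $\tau$-orbits are precisely the ones sitting on the shape of a $\mathcal P_{(3,n)}$-type grid. Reading off $\mathcal{F}_{3,n}$ along this sub-quiver gives a grid of positive integers of the right shape (width $w=n-4$), with $1$'s in the rows coming from the projective-injectives (the consecutive $3$-subsets), and we must verify this grid is a tame integral $\SL_3$-frieze. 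The $\SL_3$- and tameness conditions are relations among these entries; since $\mathcal{F}_{3,n}$ satisfies all exchange relations of the cluster algebra $\mathcal A(3,n)$ (Remark~\ref{R:mesh frieze cluster relations}), and since Theorem~\ref{thm:frieze} tells us that the specialised Pl\"ucker frieze $s\mathcal P_{(3,n)}$ is a tame $\SL_3$-frieze whose $3\times 3$ and $4\times 4$ determinant identities are consequences of Pl\"ucker relations, the same identities hold after applying $\mathcal{F}_{3,n}$. Thus the restricted grid is a tame integral $\SL_3$-frieze.

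In the other direction, given a tame integral $\SL_3$-frieze $F$ of width $w=n-4$, Proposition~\ref{prop:SL-mesh} already produces a mesh frieze on $\cC(3,n)$: the argument there uses Proposition~\ref{prop:cluster-in-F} to find a cluster-tilting object sitting inside the Pl\"ucker frieze, specialises $s\mathcal P_{(3,n)}$ to the corresponding positive integers from $F$, obtains a frieze of positive rationals on the AR-quiver satisfying the mesh relations, and then verifies integrality case by case for $n=6,7,8$. So the map $F \mapsto \mathcal{F}_{3,n}$ is well-defined.

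It then remains to check the two maps are inverse to each other. Starting from $F$, passing to the mesh frieze and restricting back to the almost-consecutive $\tau$-orbits recovers $F$: indeed the specialisation procedure of Proposition~\ref{prop:SL-mesh} is built so that the entries of $F$ sit exactly in those $\tau$-orbits. Conversely, starting from a mesh frieze $\mathcal{F}_{3,n}$, restricting to the almost-consecutive $\tau$-orbits and then re-extending: here we use that a mesh frieze on a finite type AR-quiver is uniquely determined by its values on any cluster-tilting object (equivalently, on a slice), and the almost-consecutive $3$-subsets contain such a cluster-tilting object by Proposition~\ref{prop:cluster-in-F}; uniqueness of the extension is precisely Lemma~\ref{lm:cluster-SL3-determined} translated back through the bijection $\psi_{3,n}$. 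Hence both composites are the identity.

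The main obstacle is bookkeeping rather than a deep difficulty: one has to be careful that the ``restriction'' of a mesh frieze to the almost-consecutive $\tau$-orbits genuinely lands in the $\SL_3$-frieze shape with the correct periodicity and the correct placement of the rows of $1$'s, and that the identities guaranteed by Remark~\ref{R:mesh frieze cluster relations} and Theorem~\ref{thm:frieze} are exactly the $3\times3$ and $4\times4$ diamond relations needed — this is where the explicit description of the AR-quivers in Figures~\ref{fig:AR36},~\ref{fig:AR37},~\ref{fig:AR38} and the case analysis in the proof of Proposition~\ref{prop:SL-mesh} do the work. Uniqueness (injectivity of the restriction map on mesh friezes) rests on the fact that the chosen almost-consecutive subsets include a cluster-tilting object, which for $k=3$ holds for all $n$ by Proposition~\ref{prop:cluster-in-F}, so no extra hypothesis beyond $n\in\{6,7,8\}$ is needed.
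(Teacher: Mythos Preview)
Your proposal is correct and follows essentially the same approach as the paper: use Proposition~\ref{prop:SL-mesh} for the map from $\SL_3$-friezes to mesh friezes, restrict along the almost-consecutive $\tau$-orbits for the reverse map, and invoke the presence of a cluster in the $\SL_3$-frieze (Proposition~\ref{prop:cluster-in-F} / Lemma~\ref{lm:cluster-SL3-determined}) for uniqueness. The paper's proof is terser and does not spell out why the restriction of a mesh frieze is itself a tame $\SL_3$-frieze; your justification via Remark~\ref{R:mesh frieze cluster relations} and Theorem~\ref{thm:frieze} (i.e.\ that the mesh frieze values arise from a specialisation homomorphism of $s\mathcal A(3,n)$, so all determinant identities transfer) is a clean way to fill that in.
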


\begin{proof}
For $n=6$, this can be found in~\cite{Crossroads}. 
We use Proposition~\ref{prop:SL-mesh} to see that there is a surjective map from the 
set of mesh friezes for $\cC(3,n)$ to the set of integral tame $\SL_3$-friezes. 
This map is injective since by Lemma~\ref{lm:cluster-SL3-determined}, 
in these cases, the $\SL_3$-frieze contains a cluster which uniquely determines it. 
\end{proof}

%
\section{Mesh friezes via IY-reduction}\label{sec:IY-reduction}

In this section, we explain the effect of Iyama-Yoshino reduction on mesh friezes of cluster categories. 
We will consider cluster categories of not necessarily connected Dynkin type, i.e.\ the case where 
the Auslander-Reiten quiver consists of (possibly more than one) connected components, 
each of which is isomorphic to the Auslander-Reiten quiver of a cluster category 
$D^b(\mathrm{mod} H ) /\tau^{-1} \Sigma$, where $H$ is a hereditary algebra with quiver given by a 
disjoint union of Dynkin diagrams.

We say that such a cluster category is of finite type if it has only finitely many indecomposable objects. 
In this case, the {\em rank} is the number of summands in any cluster-tilting object. 

%

Note that 
when the stable category of $\mathcal{C}(k,n)$ is of finite type (i.e.\ if $k=2$ and $n$ is arbitrary, or if 
$k=3$ and $n \in \{6,7,8\}$), then its Auslander-Reiten quiver is isomorphic to the Auslander-Reiten quiver 
of the cluster category of the same type in the sense of \cite{BMRRT06}.

Recall that for any $\mathcal A(k,n)$, specialising a cluster to $1$ yields an 
integral tame $\SL_k$-frieze of width $w=n-k-1$ (Corollary~\ref{cor:cluster-1-frieze}). 
Mesh friezes for $\cC(2,n)$ all arise from specialising a cluster to 1. 
It is known that there exist mesh friezes on finite type cluster categories 
which do not arise from specialising a cluster, the first example is a mesh frieze in 
type $D_4$ containing only 2s and 3s, see Example~\ref{ex:D4} below. 
It has appeared in \cite[Appendix A]{BM09}, see also~\cite[Section 3]{FP16}, where 
all mesh friezes of type $D$ are characterised.  
For $\cC(3,6)$ there are thus 51 different mesh friezes: 50 arising from specialising a 
cluster to 1 and the example above.

%
\subsection{Mesh friezes and their reductions}

\ 

Let $M$ be a rigid indecomposable object of a triangulated 2-Calabi-Yau category $\cC$.
Let  $M^{\perp}:=\{Y\in \cC \mid \Hom_{\cC}(M,Y[1])=0\}=\{Y\in \cC\mid \Hom_{\cC}(Y,M[1])=0\}$ 
and let 
$\cC(M)=M^{\perp}/(M)$ where $(M)$ denotes the ideal of the category $M^\perp$ 
consisting of morphisms that factor through $\mathrm{add}(M)$. 
Then by \cite[Theorem~4.2, Theorem~4.7]{IY08}, 
$\mathcal C(M)$ is triangulated 2-Calabi-Yau. 
In particular, it has a Serre functor, and thus 
Auslander-Reiten triangles. 

If $\cC$ is a (stable) cluster category of finite type, every indecomposable object is rigid. 
Moreover, the reduction $\cC(M)$ is also of finite type. 
Let $\Gamma_{\cC}$ be the Dynkin diagram associated to $\cC$. See Figure~\ref{fig:Dynkin}. 
\begin{figure}[h]
\[
A_n: \ \includegraphics[width=3cm]{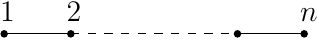} 
\hskip 1cm
D_n: \ \includegraphics[width=3cm]{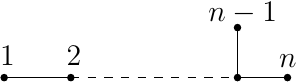} 
\]
\[
E_6: \ \includegraphics[height=1cm]{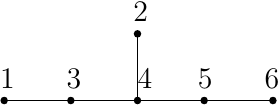}
\hskip 1cm
E_7: \ \includegraphics[height=1cm]{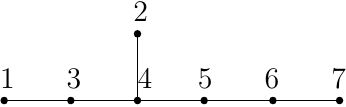}
\hskip 1cm
E_8: \ \includegraphics[height=1cm]{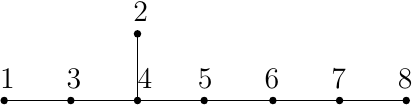}
\]
\caption{The Dynkin diagrams $\Gamma_{\cC}$ of the finite type cluster categories}\label{fig:Dynkin}
\end{figure}
Then  
the Auslander-Reiten 
quiver of $\cC$ has the shape of a quotient of the AR-quiver of the bounded derived category of this 
type, i.e.\ of 
$\Gamma_{\cC}\times \ZZ$ (cf.~\cite{Happel88}). 
For $M$ indecomposable, consider the indecomposable objects reached by 
sectional paths from $M$. The arrows of these paths form an oriented Dynkin diagram 
(see Figure~\ref{fig:Dynkin}) 
whose vertices are $M$ and the indecomposable objects on the sectional paths. 
This is a slice in the Auslander-Reiten quiver. 
The vertex of $M$ in the Dynkin diagram will be called the {\em node of $M$}. 

By considering the Auslander-Reiten quiver of $\cC(M)$, it is straightforward to prove the following statement 
(note that $\cC(M)$ may be a product of cluster categories of finite type): 

\begin{prop}\label{prop:reduce-type}
Let $\cC$ be a cluster category of type A,D or E. Then $\cC(M)$ is a cluster category. 
Its type is obtained by deleting the node of $M$ of the underlying Dynkin 
diagram. 
\end{prop}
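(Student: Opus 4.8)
The plan is to compute the Auslander--Reiten quiver of $\cC(M)$ directly, using the combinatorial description of Iyama--Yoshino reduction for finite type cluster categories. First I would recall that for a cluster category $\cC$ of Dynkin type $\Gamma_{\cC}$, its AR-quiver is a quotient of $\ZZ\Gamma_{\cC}$ by an automorphism, and that the indecomposable objects are in bijection with the (positive and negative) roots, or equivalently with the "diagonals" in the relevant geometric model. Since every indecomposable object in a finite type cluster category is rigid, the reduction $\cC(M)$ is defined for any indecomposable $M$, and by \cite[Theorem~4.2, Theorem~4.7]{IY08} it is a triangulated $2$-Calabi--Yau category; since $\cC(M)$ is a full subquotient of $\cC$ it has finitely many indecomposables, hence it is a (product of) cluster category of finite type. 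It remains to identify its type.

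The key step is to observe that the cluster-tilting objects of $\cC(M)$ are exactly $T/M$ for $T$ a cluster-tilting object of $\cC$ containing $M$ as a summand, and that mutation in $\cC(M)$ is the restriction of mutation in $\cC$ at summands different from $M$; this is standard from \cite{IY08}. Consequently the exchange graph of $\cC(M)$ is the subgraph of the exchange graph of $\cC$ spanned by cluster-tilting objects containing $M$, with $M$ never mutated. Picking any cluster-tilting object $T = M \oplus \bigoplus_{i} T_i$ of $\cC$, the quiver of $\mathrm{End}_{\cC}(T)$ is the corresponding Dynkin quiver (a slice), and the quiver of $\mathrm{End}_{\cC(M)}(T/M)$ is obtained by deleting the vertex of $M$ together with its incident arrows -- this uses the compatibility of the endomorphism-quiver description with passage to $M^{\perp}/(M)$, which follows because $\Hom$-spaces in $\cC(M)$ are computed as $\Hom$-spaces in $\cC$ modulo maps factoring through $\mathrm{add}(M)$, and for $T$ rigid there are no such maps between the remaining summands beyond what is already visible. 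Deleting a vertex from a Dynkin diagram of type $A$, $D$ or $E$ yields a disjoint union of Dynkin diagrams of type $A$, $D$ or $E$, so $\cC(M)$ is a product of cluster categories whose types are read off from the components of $\Gamma_{\cC} \setminus \{\text{node of }M\}$.

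Alternatively -- and this is probably the cleanest route to write up -- I would argue entirely on the level of AR-quivers: choose the sectional-path slice through $M$, which realises $M$ as a vertex of an oriented copy of $\Gamma_{\cC}$ inside the AR-quiver of $\cC$. The objects of $M^{\perp}$ are those $Y$ with $\Hom_{\cC}(M, Y[1]) = 0$, which by the shape of Ext-hammocks in type $A$, $D$, $E$ form a union of "rectangles" complementary to the hammock of $M$; passing to the quotient by $(M)$ glues these pieces along the boundary where $M$ sat, and one checks that the resulting translation quiver is precisely $\ZZ\Gamma'$ (mod the appropriate automorphism) where $\Gamma' = \Gamma_{\cC} \setminus \{\text{node of }M\}$. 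Since $\cC(M)$ is $2$-Calabi--Yau of finite type with this AR-quiver, it must be the cluster category of type $\Gamma'$ (cluster categories of finite type are determined by their AR-quiver).

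The main obstacle is the bookkeeping in this last identification: verifying that the quotient $M^{\perp}/(M)$ has exactly the right number of indecomposables and that its translation-quiver structure matches $\ZZ\Gamma'$ requires a careful case analysis of where the node of $M$ sits in $\Gamma_{\cC}$ (interior vs.\ branch vs.\ endpoint, and in the $D$ and $E$ cases the trivalent vertex), together with tracking the AR-translation across the gluing. This is exactly why the statement is phrased as "straightforward to prove by considering the AR-quiver of $\cC(M)$'' -- the content is routine but the case-checking is unavoidable; I would organise it by the possible nodes of $\Gamma_{\cC}$ rather than by $M$ itself, since only the position of the node matters.
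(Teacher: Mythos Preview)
Your proposal is correct, and in fact considerably more detailed than what the paper offers: the paper does not give a proof at all, merely stating before the proposition that ``by considering the Auslander--Reiten quiver of $\cC(M)$, it is straightforward to prove the following statement.'' Your second route --- computing the translation quiver of $M^{\perp}/(M)$ by removing the Ext-hammock of $M$ and regluing --- is exactly the argument the paper has in mind, and your remark that the bookkeeping depends only on the node of $M$ (not on $M$ itself) is the right organising principle for the case analysis.

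Your first route, via the endomorphism quiver of the reduced cluster-tilting object $T/M$, is a genuinely different and arguably cleaner argument. Its advantage is that it avoids the hammock-and-gluing combinatorics entirely: once you know the quiver of $\mathrm{End}_{\cC(M)}(T/M)$ is the Dynkin quiver with the node of $M$ deleted, you are done up to invoking a recognition theorem (Keller--Reiten) that a $2$-CY triangulated category with a cluster-tilting object whose endomorphism algebra is hereditary of Dynkin type is the corresponding cluster category. The paper's AR-quiver approach, by contrast, is more self-contained (no external recognition theorem needed) but requires the case-by-case verification you anticipate. One small point to tighten in your first route: the sentence ``for $T$ rigid there are no such maps between the remaining summands beyond what is already visible'' is slightly imprecise --- what you actually use is that for a \emph{slice} $T$, every morphism $T_i \to T_j$ between non-$M$ summands that factors through $\mathrm{add}(M)$ is a composite of arrows passing through the vertex of $M$, so killing those leaves exactly the path algebra of the deleted quiver; rigidity alone does not give this.
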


\begin{notation}
Let $\mathcal F$ 
be a mesh frieze on a finite type cluster category $\cC$.  
Let $M$ be an indecomposable object in $\cC$. 
We write $\mathcal F(N)$ for the (integer) value of an indecomposable $N$ in $\mathcal F$. 
Then $\mathcal F|_{\cC(M)}$ denotes the collection of numbers of $\mathcal F$ on the AR-quiver of 
$\cC(M)$. 
\end{notation}

\begin{prop}\label{prop:restrict-frieze}
Let $\mathcal F$ be a mesh frieze on a finite type cluster category $\cC$, containing an entry 1. 
Let $M\in \ind \cC$ such that the frieze entry at $M$ is 1. Then 
$\mathcal F|_{\cC(M)}$ is a mesh frieze for $\cC(M)$. 
\end{prop}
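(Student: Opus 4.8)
\textbf{Proof plan for Proposition~\ref{prop:restrict-frieze}.}
The plan is to verify the two defining properties of a mesh frieze for $\cC(M)$ directly: (a) the frieze entries are positive integers, (b) the entries are equal to $1$ on every projective-injective (equivalently, on the appropriate objects of $\cC(M)$), and (c) the mesh relations on the Auslander--Reiten quiver of $\cC(M)$ are all satisfied. Since $\mathcal F|_{\cC(M)}$ is by definition the restriction of the collection of integers $\mathcal F$ to those indecomposables of $\cC$ which survive in $\cC(M)$ (i.e.\ the indecomposables $N \in M^\perp$ with $N \notin \mathrm{add}(M)$), property (a) is immediate, and (c) is the only real content. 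So the heart of the argument is to understand how Auslander--Reiten sequences of $\cC(M)$ relate to those of $\cC$.

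First I would recall, via Proposition~\ref{prop:reduce-type} (and its Grassmannian-cluster-category analogue, using the Auslander--Reiten theory of $\cC(M)$ from \cite{IY08}), that the Auslander--Reiten quiver of $\cC(M)$ is obtained from that of $\cC$ in a controlled combinatorial way: deleting the node of $M$ from the Dynkin diagram, which on the level of the $\ZZ$-covering amounts to removing the $\tau$-orbit of $M$ and ``stitching'' the quiver back together. Concretely, for an indecomposable $N$ of $\cC(M)$ the AR-triangle in $\cC(M)$ ending at $N$ is obtained from the AR-triangle(s) in $\cC$ ending at $N$ by: keeping all middle-term summands that are not in the $\tau$-orbit of $M$, and, whenever a summand of the middle term lies in that orbit (hence is killed in $\cC(M)$), replacing it by the other neighbours it is connected to — i.e.\ composing two consecutive AR-sequences of $\cC$ across the removed orbit. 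I would make this precise either by a direct inspection of the shape $\Gamma_{\cC} \times \ZZ$ of the quiver, or by quoting the explicit description of AR-sequences under Iyama--Yoshino reduction. The key point that makes everything work is the hypothesis $\mathcal F(M) = 1$: the removed orbit carries the value $1$ throughout (by the combinatorics of mesh friezes, every indecomposable in the $\tau$-orbit of $M$ also has frieze value $1$, since the mesh relations force $\mathcal F(\tau M)\mathcal F(M) = (\text{product of neighbours}) + 1$ and, for $M$ with value $1$ sitting at a node, this propagates), so inserting or deleting these $1$'s does not change any product appearing in a mesh relation.

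Then the verification of (c) splits into two cases for an AR-triangle $0 \to A \to \bigoplus B_i \to C \to 0$ of $\cC(M)$. In the ``unchanged'' case, this triangle is literally an AR-triangle of $\cC$ (all its terms avoid the $\tau$-orbit of $M$), and the mesh relation holds because $\mathcal F$ is a mesh frieze on $\cC$. In the ``stitched'' case, the triangle arises by concatenating two AR-triangles of $\cC$, say $0 \to A \to B' \oplus M' \to Z \to 0$ and $0 \to Z \to B'' \oplus M'' \to C \to 0$ where $M', M''$ lie in the removed orbit (so have $\mathcal F$-value $1$) and $Z$ is the removed indecomposable; multiplying the two mesh relations of $\cC$ and using $\mathcal F(M') = \mathcal F(M'') = \mathcal F(Z) = 1$ yields, after cancelling $\mathcal F(Z)$, exactly the mesh relation $\mathcal F(A)\mathcal F(C) = \prod \mathcal F(B_i) + 1$ for $\cC(M)$. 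Property (b) is handled in the same spirit: the projective-injective (or boundary) objects of $\cC(M)$ are among those of $\cC$, or are obtained by the stitching, and in either case inherit the value $1$. I expect the main obstacle to be pinning down precisely the combinatorics of AR-sequences under the reduction — in particular handling the cases where the removed orbit meets a middle term with multiplicity, or sits at a branch point of a $D$ or $E$ diagram, so that ``stitching'' may involve more than two AR-triangles at once — and checking that in every such configuration the telescoping product of the relevant mesh relations of $\cC$, together with the value $1$ on the removed orbit, collapses cleanly to the single mesh relation of $\cC(M)$.
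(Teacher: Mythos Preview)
Your proposal has a genuine gap: the description of Iyama--Yoshino reduction is not correct, and the argument built on it does not go through. The indecomposables of $\cC(M)$ are those $N\in\ind\cC$ with $\mathrm{Ext}^1_\cC(M,N)=0$ and $N\not\cong M$; what is removed is $M$ together with its entire Ext-hammock, not the $\tau$-orbit of $M$. (The statement that the Dynkin diagram loses the node of $M$ describes only the abstract \emph{shape} of the new AR-quiver, not which objects of $\cC$ label its vertices; in particular $\tau_{\cC(M)}\neq\tau_\cC$.) Hence your claim that every object in the $\tau$-orbit of $M$ has frieze value $1$ is false---this would force an entire row of any mesh frieze to be constantly $1$, which already fails in type $A$---and the ``stitching across the removed orbit'' picture of AR-triangles is not how AR-sequences of $\cC(M)$ arise from $\cC$. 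Even granting your picture, the telescoping fails arithmetically: from $\mathcal F(A)\mathcal F(Z)=\mathcal F(B')+1$ and $\mathcal F(Z)\mathcal F(C)=\mathcal F(B'')+1$ with $\mathcal F(Z)=1$ one gets $\mathcal F(A)\mathcal F(C)=\mathcal F(B')\mathcal F(B'')+\mathcal F(B')+\mathcal F(B'')+1$, not $\mathcal F(B')\mathcal F(B'')+1$.

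The paper's proof bypasses any combinatorics of AR-quivers. Given an AR-triangle $A\to B\to C\to A\langle 1\rangle$ in $\cC(M)$, one picks a slice cluster-tilting object $T$ of $\cC(M)$ with $A$ a source, so that $\widetilde T=T\oplus M$ is cluster-tilting in $\cC$. The AR-triangle and its companion then lift to the two $\mathrm{add}(\widetilde T/A)$-approximation triangles of $A$ in $\cC$, whose extra summands can only be powers $M^i$, $M^j$ of $M$. These give an exchange relation $x_Ax_C=x_Bx_M^i+x_M^j$ in the associated cluster algebra. The key input is Remark~\ref{R:mesh frieze cluster relations}: mesh-frieze values satisfy \emph{all} exchange relations of the cluster algebra, not only the mesh ones. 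Substituting $\mathcal F(M)=1$ immediately yields $\mathcal F(A)\mathcal F(C)=\mathcal F(B)+1$, the mesh relation in $\cC(M)$.
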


\begin{proof}
Denote by $\langle 1 \rangle$ suspension in $\mathcal{C}(M)$ and let 

\begin{equation}\label{ar-triangle}
A\to B \to C \to A\langle 1\rangle 
\end{equation}
be an AR-triangle in $\mathcal{C}(M)$.  We need to show that the corresponding values of the mesh frieze satisfy the equation $\mathcal F(A)\mathcal F(C)=\mathcal F(B)+1$, where 
$\mathcal F(B)=\mathcal F(B_1)\cdots \mathcal F(B_t)$ whenever $B=\oplus_{i=1}^{t} B_i$.

Let $T$ be a cluster tilting object in $\mathcal{C}(M)$ consisting of all indecomposable objects $N$ that admit a sectional path starting at $A$ and ending at $N$.  Note that $T$ forms a slice in the AR-quiver of $\mathcal{C}(M)$ with $A$ being a source.  Moreover, $A,B\in \text{add}\, T$ while $C\not\in \text{add}\,T$, and the triangle (\ref{ar-triangle}) corresponds to a minimal left $\text{add}\,(T/A)$-approximation of $A$.  By construction, the minimal right $\text{add}\,(T/A)$-approximation of $A$ is zero.

\begin{equation}\label{ar-triangle2}
A\langle-1\rangle \to C \to 0 \to A 
\end{equation}

Next consider the object $\widetilde{T}=T\oplus M$ of $\mathcal{C}$.  Since, $T\in\mathcal{C}(M)$ it follows that $\text{Ext}^1_{\mathcal{C}}(T,M)=0$ 
Also, $\text{Ext}^1_{\mathcal{C}}(T,T)=0$, because $T$ is a cluster-tilting object in $\mathcal{C}(M)$.  Finally, $M$ is an indecomposable object of $\mathcal{C}$, which means that it is rigid.  This implies that $\widetilde{T}$ is rigid in $\mathcal{C}$.   Thus, $\widetilde{T}$ is a cluster-tilting object in $\mathcal{C}$ as it has the correct number of indecomposable summands.   

The AR-triangles (\ref{ar-triangle}) and (\ref{ar-triangle2}) in $\mathcal{C}(M)$ lift to the corresponding triangles in $\mathcal{C}$:

\begin{equation}\label{ar-triangle3}
A\to \widetilde{B} \to C \to A[1] \hspace{2cm} A[-1]\to C \to \widetilde{B}' \to A  
\end{equation}
where $\widetilde{B} \cong B \oplus M^i$ and $\widetilde{B}'\cong M^j$ for some non-negative integers $i,j$ 
and $[1]$ denotes suspension in $\mathcal{C}$. 
Here, $M^0$ denotes the zero module. 
Moreover, we see that the two triangles in (\ref{ar-triangle3}) are the left and right $\text{add}\,\widetilde{T}/A$-approximations of $A$ respectively.  

Let $\mathcal{A}$ be the cluster algebra associated to $\mathcal{C}$.  For an object $N\in \mathcal{C}$ let $x_N$ denote the associated (product of) cluster variable(s) in $\mathcal{A}$.  Because $\mathcal{C}$ is the categorification of $\mathcal{A}$ it follows that the two triangles (\ref{ar-triangle3}) give rise to the following exchange relation in the cluster algebra.

$$x_A x_C = x_{\widetilde{B}}+x_{\widetilde{B}'}$$ 

Note that the entries of the associated mesh frieze $\mathcal F$ also satisfy the relations of the cluster algebra 
$\mathcal{A}$. 
By Remark~\ref{R:mesh frieze cluster relations} we obtain the desired equation 

$$\mathcal F(A)\mathcal F(C)=\mathcal F(\widetilde{B})+\mathcal F(\widetilde{B}')=\mathcal F(B)\mathcal F(M^i)+\mathcal F(M^j)=\mathcal F(B)+1$$

because $\mathcal{F}(M^1) = \mathcal F(M)=1$ and $\mathcal F(M^0)=\mathcal F(0)=1$.
\end{proof}

\begin{remark}
We can use Proposition~\ref{prop:restrict-frieze} to argue that a mesh frieze for $\cC$ 
always has $\le m$ entries 1, if $m$ is the rank of $\cC$: 
Let $\mathcal F$ be a mesh frieze for $\cC$, a cluster category of rank $m$. Assume that 
$\mathcal F$ has $>m$ entries $1$. 
Take $M_0$ indecomposable such that its entry in the frieze is $1$. 
Let $\cC_1:=\cC(M_0)$. This has rank $m-1$. Let $\mathcal F_1=\mathcal F\mid_{\cC_1}$. 
Take $M_1\in \ind \cC_1$ such that the entry of $M_1$ in $\mathcal F_1$ is $1$. 
Let $\cC_2:=\cC_1(M_1)$. This is a cluster category of rank $m-2$ and 
$\mathcal F_2:=\mathcal F_1\mid_{\cC_2}$ has $\ge m-1$ entries equal to 1. Iterate until 
the resulting cluster category is a product of cluster categories of type A, whose 
rank is $m'$ (and $m'\ge 3$) 
and where the associated frieze $\mathcal F'$ has $\ge m'+1$ entries equal to $1$s. 
Contradiction.  
Here, we also might need to justify that passing from $\mathcal F$ to $\mathcal F_1$ we loose exactly one entry in the frieze that equals 1.  This holds, because suppose on the contrary that $\mathcal{F}(M_0)=\mathcal{F}(M_0')=1$, where $M_0'$ is some other indecomposable object of $\mathcal C$, and $M_0' \not \in M_0^\perp$.  This implies that $\text{Ext}^1_{\cC}(M_0, M_0')\not=0$.  In particular, there exist two triangles in $\cC$ 
$$M_0\to B \to M_0'\to M_0[1] \hspace{1cm} M_0'\to B' \to M_0 \to M_0'[1].$$
And by the same reasoning as in the proof of the above proposition, we obtain 
$$\mathcal{F}(M_0)\mathcal{F}(M_0') = \mathcal{F}(B)+\mathcal{F}(B').$$
The right hand side is clearly $\geq 2$, because $\mathcal{F}$ is an integral mesh frieze. This is a contradiction to $\mathcal{F}(M_0)=\mathcal{F}(M_0')=1$.
\end{remark}

%
\subsection{Counting friezes}

\ 

In this section, we study the number of mesh friezes for the Grassmannian 
cluster categories $\cC(3,n)$ in the finite types (types $D_4, E_6$ and $E_8$) 
and for the cluster category of type $E_7$. 
We recover the known number of mesh friezes for $\cC(3,7)$ and 
find all the known mesh friezes for $\cC(3,8)$ and for cluster categories 
of type $E_7$. 
Recall that specialising a cluster to 1 gives a unique mesh frieze on $\cC(k,n)$, 
cf. Corollary~\ref{cor:cluster-1-frieze}. 

\begin{definition}
A {\em unitary mesh frieze} is a mesh frieze which arises from 
specialising a cluster to 1. 
\end{definition}

Not every frieze pattern arises this way. Some mesh friezes have less 1's than 
the rank of the cluster category.  These are called {\em non-unitary friezes}. 
We will focus on these in what follows.

\begin{remark}\label{rem:count-known}
For $k=2$, the notion of mesh frieze coincides with 
the notion of $\SL_2$-frieze and the numbers of them are well-known, they are counted by the 
Catalan numbers. In particular, they all arise from specialising a cluster to 1. 
From now on, we thus concentrate on $k=3$ and $n\in \{6,7,8\}$. 
Recall first that the case $(3,6)$ corresponds 
to a cluster category of type $D_4$, the case $(3,7)$ to a cluster category of type $E_6$ and the 
case (3,8) to a cluster category of type $E_8$. 
The number of clusters 
in these types are 50, 833 and 25080 respectively, see e.g.~\cite[Table 3]{FZ2003}. 

It is known that the number of 
$\SL_3$-friezes of width 2 or, equivalently, the number of mesh friezes for $\cC(3,6)$
is 51 (this is proven via $2$-friezes in \cite[Prop.~6.2]{MGOT12},  and can be checked using 
a computer algebra system or counting the mesh friezes on a cluster category of 
type $D_4$, see Example~\ref{ex:D4} below). 
The number of $\SL_3$-friezes (equivalently, the number of mesh friezes for $\cC(3,7)$)
of width 3 is 868 by Theorem~\ref{theo::main}. 
Note that this number was already stated, citing a private communication by Cuntz, in 
\cite{Crossroads}. 
Appendix~\ref{sec:appendixB} now provides a proof for this statement

Fontaine and Plamondon found 
26952 mesh friezes for $\cC(3,8)$, see the lists on \cite{FP-lists}, see also \cite[Conjecture 2.1]{Cuntz} 
and  \cite[Section 4.4]{Crossroads}. 

We also recall that the number of clusters in a cluster category of type $E_7$ is 4160. 
Conjecturally, there are 4400 mesh friezes in type $E_7$ (\cite{FP-lists}). 
\end{remark}

\begin{cor}\label{cor:A-types}
Let $\mathcal F$ be a mesh frieze on a finite type cluster category $\cC$ and $M\in \ind\cC$ an object whose 
frieze entry is $1$. Then the following holds: \\
(1) If $\mathcal F':=\mathcal F|_{\cC(M)}$ is unitary, then $\mathcal F$ is unitary. \\
(2) 
If the Dynkin type of $\cC(M)$ is $A$ or a product of $A$-types, then $\mathcal F$ is unitary. 
\end{cor}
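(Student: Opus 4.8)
The plan is to prove both statements by induction on the rank of $\cC$, using Proposition~\ref{prop:restrict-frieze} together with Proposition~\ref{prop:reduce-type} as the engine that lets us pass to smaller rank. For part (1), suppose $\mathcal F' = \mathcal F|_{\cC(M)}$ is unitary, so there is a cluster-tilting object $T'$ in $\cC(M)$ all of whose summands have frieze entry $1$. Following the construction in the proof of Proposition~\ref{prop:restrict-frieze}, the object $\widetilde T := T' \oplus M$ is a cluster-tilting object of $\cC$, and every indecomposable summand of $\widetilde T$ has frieze entry $1$ (the summands of $T'$ by hypothesis, and $M$ by assumption). Since a mesh frieze is determined uniquely by its values on any cluster-tilting object, and since specialising the corresponding cluster in $\mathcal A$ to $1$ produces a mesh frieze agreeing with $\mathcal F$ on $\widetilde T$, we conclude $\mathcal F$ is unitary. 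The one point needing care here is that $\widetilde T$ really is cluster-tilting in $\cC$ and not merely rigid --- but this is exactly what was verified in the proof of Proposition~\ref{prop:restrict-frieze} (correct number of indecomposable summands, rigidity from $\mathrm{Ext}^1(T',M)=0$ together with $M$ rigid and $T'$ rigid).

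For part (2), I would argue by induction on the rank $m$ of $\cC$, with the crucial input being that \emph{every} mesh frieze on a cluster category of type $A$ (or a product of such) is unitary --- this is the classical Conway--Coxeter bijection between $\SL_2$-friezes and triangulations of polygons, recalled in Example~\ref{ex:SL2} and Example~\ref{ex:mesh-sl2}, which in particular says all such friezes arise from specialising a cluster to $1$. For a product of $A$-types the statement follows componentwise. Now take $\mathcal F$ a mesh frieze on $\cC$ and $M \in \ind\cC$ with $\mathcal F(M)=1$, and suppose the Dynkin type of $\cC(M)$ is $A$ or a product of $A$'s. Then $\mathcal F' = \mathcal F|_{\cC(M)}$ is a mesh frieze on $\cC(M)$ by Proposition~\ref{prop:restrict-frieze}, hence unitary by the type-$A$ fact, hence $\mathcal F$ is unitary by part (1). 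This handles (2) directly without even needing the induction in full generality --- the reduction $\cC(M)$ is already of type $A$ by hypothesis, so one step of reduction suffices.

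The main obstacle, such as it is, lies in making sure the type-$A$ base case is correctly invoked: we need that mesh friezes on cluster categories of type $A_n$ (in the sense of \cite{BMRRT06}, \cite{CCS06}) coincide with integral $\SL_2$-friezes of the appropriate width, and that these are all unitary. The identification of mesh friezes in type $A$ with $\SL_2$-friezes is Remark~\ref{rem:mesh-ARS} combined with Example~\ref{ex:mesh-sl2} (noting that type $A_{n-3}$ is exactly $\cC(2,n)$), and unitarity is the content of the Conway--Coxeter correspondence: every triangulation of an $(n)$-gon gives a cluster, and every $\SL_2$-frieze comes from a triangulation, so every such frieze is the specialisation of a cluster to $1$. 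A secondary subtlety is the product case: if $\cC(M) = \cC_1 \times \cC_2$ then a mesh frieze on $\cC(M)$ restricts to a mesh frieze on each factor, and unitarity of each factor (via a cluster-tilting object in each $\cC_i$) assembles to unitarity of $\cC(M)$ since a cluster-tilting object of a product is a direct sum of cluster-tilting objects of the factors; this is routine but should be stated. Beyond these bookkeeping points, both parts are formal consequences of Propositions~\ref{prop:reduce-type} and~\ref{prop:restrict-frieze} and the uniqueness of a mesh frieze given its values on a cluster-tilting object.
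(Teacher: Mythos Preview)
Your proposal is correct and follows essentially the same approach as the paper: reduce to $\cC(M)$ via Proposition~\ref{prop:restrict-frieze}, lift the cluster-tilting object $T'$ back to $\widetilde T = T' \oplus M$ in $\cC$ (as in the proof of Proposition~\ref{prop:restrict-frieze}), and for (2) invoke that all type-$A$ mesh friezes are unitary. Your write-up is in fact more careful than the paper's brief proof, which simply counts the $n$ entries equal to $1$ without spelling out that they sit on a cluster-tilting object; your explicit verification of this point and your handling of the product case are welcome clarifications rather than departures from the paper's argument.
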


\begin{proof}
Let $n$ be the rank of $\cC$. 
By Proposition~\ref{prop:restrict-frieze}, $\mathcal F':=\mathcal F|_{\cC(M)}$ is a mesh frieze for $\cC(M)$. 
If $\cC(M)$ is unitary, then $\mathcal F'$ contains $n-1$ entries equal to 1. 
But then $\mathcal F$ contains $n$ 1s and is unitary. 
The second statement then follows, since in type A, all mesh friezes are unitary. 
\end{proof}

An immediate consequence of Corollary~\ref{cor:A-types} is the following. 
\begin{cor}\label{cor:unitary}
Let $\mathcal F$ be a mesh frieze on a cluster category of type $D$ or $E$. 
If $\mathcal F$ contains an entry 1 at the branch node or at a node which is an immediate neighbour of the 
branch node, then $\mathcal F$ is unitary. 
\end{cor}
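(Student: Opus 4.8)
The plan is to reduce Corollary~\ref{cor:unitary} directly to Corollary~\ref{cor:A-types} by computing the type of the Iyama--Yoshino reduction $\cC(M)$ in the two cases. Recall from Proposition~\ref{prop:reduce-type} that for a cluster category $\cC$ of Dynkin type $\Gamma_{\cC}$ and an indecomposable object $M$, the reduction $\cC(M)$ is again a (possibly disconnected) cluster category whose type is obtained from $\Gamma_{\cC}$ by deleting the node of $M$. So the whole argument is a case check on the Dynkin diagrams $D_n$, $E_6$, $E_7$, $E_8$ in Figure~\ref{fig:Dynkin}, verifying that deleting the branch node or one of its three neighbours leaves a disjoint union of type-$A$ diagrams.

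First I would dispose of the branch-node case: deleting the branch (trivalent) vertex from $D_n$ leaves $A_1 \sqcup A_1 \sqcup A_{n-3}$; from $E_6$ it leaves $A_1 \sqcup A_2 \sqcup A_2$; from $E_7$ it leaves $A_1 \sqcup A_2 \sqcup A_3$; and from $E_8$ it leaves $A_1 \sqcup A_2 \sqcup A_4$. In every case the result is a product of type-$A$ diagrams, so by Proposition~\ref{prop:reduce-type} the reduction $\cC(M)$ is a product of cluster categories of type $A$, and Corollary~\ref{cor:A-types}(2) applies to give that $\mathcal{F}$ is unitary. Second, I would handle the immediate-neighbours case. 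Deleting a vertex adjacent to the branch node disconnects the diagram into the two or three pieces that remain; one needs to observe that in each of $D_n, E_6, E_7, E_8$ the branch node has exactly three neighbours, each of which is either a leaf of one of the three legs or an interior vertex of a leg, and that removing it always leaves a disjoint union of paths. Concretely: in $D_n$, removing a neighbour of the branch node leaves $A_1 \sqcup A_1 \sqcup A_{n-4}$ or $A_{n-2}$ (the two short branches become isolated points or one is absorbed) — in all subcases a union of $A$-types; in $E_6$ one gets, e.g., $A_1 \sqcup A_1 \sqcup A_3$ or $A_2 \sqcup A_3$ or $A_5$ depending on which neighbour; similarly for $E_7$ and $E_8$ one always lands on a disjoint union of path graphs. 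Again Corollary~\ref{cor:A-types}(2) finishes the argument.

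To organise the write-up cleanly I would state at the outset that, by Proposition~\ref{prop:restrict-frieze}, $\mathcal{F}|_{\cC(M)}$ is a mesh frieze on $\cC(M)$, and that by Corollary~\ref{cor:A-types}(2) it suffices to check $\cC(M)$ is a product of cluster categories of type $A$; then by Proposition~\ref{prop:reduce-type} this is equivalent to the purely graph-theoretic claim that deleting the relevant vertex of $\Gamma_{\cC}$ yields a disjoint union of type-$A$ Dynkin diagrams (i.e. of path graphs with no trivalent vertex). The remaining content is then a short, finite inspection of the diagrams in Figure~\ref{fig:Dynkin}, which I would present as a brief table or a one-line enumeration per type rather than a lengthy argument.

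I do not expect a genuine obstacle here: the statement is essentially a corollary in the strict sense, and the only thing to be careful about is the bookkeeping — making sure one has correctly identified the three neighbours of the branch node in each of $D_n$ (for all $n \geq 4$, noting $D_4$ is the symmetric special case where the branch node has three leaves as neighbours), $E_6$, $E_7$, $E_8$, and that removing any such vertex genuinely produces only path components. The mildest subtlety is the $D_4$ case, where "branch node" and "neighbour of the branch node" interact symmetrically, and the small-$n$ edge behaviour of $D_n$; but in every instance the deleted-vertex diagram is manifestly a union of $A$-types, so Corollary~\ref{cor:A-types} closes the proof.
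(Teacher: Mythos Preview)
Your approach is correct and matches the paper's, which simply records the result as an immediate consequence of Corollary~\ref{cor:A-types} without spelling out the case analysis. A few of your explicit type computations are off (e.g., in $D_n$ deleting the long-arm neighbour of the branch node gives $A_3 \sqcup A_{n-4}$ rather than $A_1 \sqcup A_1 \sqcup A_{n-4}$, and deleting a short-arm leaf gives $A_{n-1}$ rather than $A_{n-2}$; similarly your $E_6$ list should read $A_1 \sqcup A_4$, $A_1 \sqcup A_4$, $A_5$), but this does not affect the argument since all that is needed is that the resulting components are of type~$A$.
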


By Corollary~\ref{cor:unitary}, non-unitary friezes cannot exist in ranks $\le 3$: In terms of ranks, 
the smallest possible cluster category 
with non-unitary mesh friezes is of type $D_4$. Indeed, we have one in this case, see Example~\ref{ex:D4}. 

\begin{ex}\label{ex:D4}
There exists a non-unitary mesh frieze in type $D_4$ (\cite[Appendix A]{BM09}. 
It is the only non-unitary mesh frieze in type $D_4$, see \cite[Section 3]{FP16}). 
\[
\xymatrix@C=.08em@R=.06em{
  & 2 && 2 && 2 && 2 &&   \\
 &  & 3 && 3 && 3 && 3  &&  \\
  \cdots &  && 2 && 2 && 2 &&  2 && \cdots\\
&  && 2 && 2 && 2 &&  2 && 
}
\]
\end{ex}

\subsubsection{Non-unitary mesh friezes in type $E_6$.}\label{ssec:E6}

Let $\cC$ be a cluster category of type $E_6$. 
Using the result of Cuntz and Plamondon (\ref{theo::main}), one can deduce that 
there are 35 non-unitary mesh friezes for $\cC$. 
One can extract these from the 
lists available on \cite{FP-lists}. One can check that each of them contains 
two 1s. 

Here, we explain how non-unitary mesh friezes arise in type $E_6$ and how any frieze with 
a 1 must contain at least two 1s. 
Observe that the AR-quiver of a cluster category of type $E_6$ consists of 7 slices of the 
Dynkin diagram $E_6$: 
\[
\xymatrix@C=.4em@R=.4em{
  & \bullet\ar@{-}[rd] && \bullet\ar@{-}[rd] && \bullet\ar@{-}[rd] && \bullet\ar@{-}[rd] && 
   \bullet\ar@{-}[rd] && \bullet\ar@{-}[rd] && \bullet\ar@{-}[rd] && \circ\ar@{.}[rd] \\ 
 && \bullet\ar@{-}[rd]\ar@{-}[ru] && \bullet\ar@{-}[rd]\ar@{-}[ru] && \bullet\ar@{-}[rd]\ar@{-}[ru] && 
  \bullet\ar@{-}[rd]\ar@{-}[ru] && \bullet\ar@{-}[rd]\ar@{-}[ru] && \bullet\ar@{-}[rd]\ar@{-}[ru] && \bullet\ar@{-}[rd]\ar@{-}[ru] && \circ \\
 & \bullet\ar@{-}[rd]\ar@{-}[ru]\ar@{-}[r] &\bullet\ar@{-}[r]  & \bullet\ar@{-}[rd]\ar@{-}[ru]\ar@{-}[r] & \bullet\ar@{-}[r] & 
   \bullet\ar@{-}[rd]\ar@{-}[ru]\ar@{-}[r] & \bullet\ar@{-}[r] & 
   \bullet\ar@{-}[rd]\ar@{-}[ru]\ar@{-}[r] & \bullet\ar@{-}[r] & 
   \bullet\ar@{-}[rd]\ar@{-}[ru]\ar@{-}[r] & \bullet\ar@{-}[r] & 
   \bullet\ar@{-}[rd]\ar@{-}[ru]\ar@{-}[r] & \bullet\ar@{-}[r] & 
   \bullet\ar@{-}[rd]\ar@{-}[ru]\ar@{-}[r] & \bullet\ar@{-}[r] &  \circ\ar@{-}[r]\ar@{.}[rd]\ar@{.}[ru] & \circ \\
 && \bullet\ar@{-}[rd]\ar@{-}[ru] && \bullet\ar@{-}[rd]\ar@{-}[ru] && \bullet\ar@{-}[rd]\ar@{-}[ru] && \bullet\ar@{-}[rd]\ar@{-}[ru] && 
 \bullet\ar@{-}[rd]\ar@{-}[ru] && \bullet\ar@{-}[rd]\ar@{-}[ru] && \bullet\ar@{-}[rd]\ar@{-}[ru]  && \circ\\
 & \bullet\ar@{-}[ru]&& \bullet\ar@{-}[ru] && \bullet\ar@{-}[ru] && \bullet\ar@{-}[ru] && \bullet\ar@{-}[ru] && \bullet\ar@{-}[ru] && 
  \bullet\ar@{-}[ru]  && \circ\ar@{.}[ru]
  }
\]
(Note that the last slice with circles involves a twist, the top node is identified with the bottom 
node in the first slice, etc.). 
 
By Corollary~\ref{cor:A-types}, if a non-unitary (mesh) frieze in type $E_6$ contains an entry 1, 
the node of this entry has to be 1 (or 6) (see Figure~\ref{fig:Dynkin} for the labelling of nodes). 
So let $\mathcal F$ be a non-unitary frieze for $\cC$ and 
$M$ an indecomposable corresponding to an entry 1 in the top row. 
Then $\cC(M)$ is a cluster category of type $D_5$. 
Furthermore, $\mathcal F|_{\cC(M)}$ is a non-unitary mesh frieze on $\cC(M)$ (since otherwise, $\mathcal F$ is unitary, by Corollary~\ref{cor:A-types}). 
There are exactly 5 non-unitary frieze on a cluster category of type $D_5$, the 
five $\tau$-translates of the following mesh frieze: 
\[
\xymatrix@C=.08em@R=.06em{ 
 2 && 3 && 1 && 3 && 2 \\ 
  & 5 && 2 && 2 && 5 && 3 \\
   && 3 && 3 && 3 && 7 && 7 && \\
  & && 2 && 2 && 2 && 4 && 2 \\
  & && 2 && 2 && 2 && 4 && 2 
}
\]
Since there are 7 positions for the first entry $1$ we picked (respectively for $M$), all in all 
there are exactly 35 different non-unitary friezes for $\cC$. 

\subsubsection{Non-unitary mesh friezes in type $E_7$.} \label{ssec:E7}

Let $\cC$ be a cluster category of type $E_7$. 
Conjecturally, there are 240 non-unitary mesh friezes for $\cC$. 
We now show how 240 non-unitary mesh friezes arise from non-unitary friezes 
for a cluster category of type $D_6$ or of type $D_4$. 
First recall that the AR-quiver 
of $\cC$ consists of 10 slices of a Dynkin diagram of type $E_7$. 
If $\mathcal F$ is a non-unitary for $\cC$, it cannot contain entries equal to 1 in the nodes 2,3,4 or 5 (see Figure~\ref{fig:Dynkin}) 
for the labels of the nodes in the Dynkin diagram).

\begin{remark}\label{rem:E7}
Let $\mathcal F$ be a non-unitary frieze for $\cC$. 
\begin{enumerate}[leftmargin=*,label=(i)]
\item Assume that $\mathcal F$ contains an entry 1 in the $\tau$-orbit 
of node 1, let $M$ be the corresponding indecomposable object. 
Then $\cC(M)$ is of type $D_6$. \\
(a) There are 2 mesh friezes of type $D_6$ without any 1's, see Example~\ref{ex:D6}. 
Hence we get 20 mesh friezes of type $E_7$ having an entry $1$ in the $\tau$-orbit of 1 and all 
other entries $\ge 2$. \\
(b) So assume that $\mathcal F'=\mathcal F_{\cC(M)}$ contains an additional entry 1, with 
corresponding module $N$. 
Then doing a further reduction $(\cC(M))(N)$ yields a cluster category of type $D_5$ or of type 
$A_1\times D_4$. In both cases, $\mathcal F'$ has two entries equal to 1. 
Analyzing the AR-quiver of 
$\cC(M)$ and the possible positions of these $1$s yields another 220 non-unitary mesh 
friezes, as one can check. 
\item The above mesh friezes cover 
are all known non-unitary mesh friezes in type $E_7$ having at least one entry 1, according to the 
lists of Fontaine and Plamondon. 
\end{enumerate}
\end{remark}

\begin{conj}\label{conj:E7}
There are no mesh friezes for $\cC$ of type $E_7$ where all entries are $\ge 2$. 
\end{conj}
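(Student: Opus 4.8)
The plan is to prove Conjecture \ref{conj:E7} by a reduction argument analogous to the treatment of type $E_6$ in Section \ref{ssec:E6}, combined with a finite check in the ``base'' cases. Suppose, for contradiction, that $\mathcal{F}$ is a mesh frieze on a cluster category $\cC$ of type $E_7$ with all entries $\ge 2$. The first step is to exploit the self-similar structure of the AR-quiver: since $\cC$ is of finite type, the AR-quiver consists of finitely many slices of the Dynkin diagram $E_7$, and the entries of $\mathcal{F}$ satisfy the mesh relations \eqref{ar-triangle}. I would first recall, or re-derive via Proposition \ref{prop:restrict-frieze} together with the classification of friezes in type $D_6$ (and the smaller types $D_5$, $A_1 \times D_4$, etc.), exactly which configurations of $1$'s are forced to appear; but here the hypothesis is that \emph{no} entry equals $1$, so the IY-reduction tool of Proposition \ref{prop:restrict-frieze} is not directly available (it requires an entry $1$).

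Because of this, the argument must proceed ``from below'' rather than ``by reduction''. The key step is to analyze the mesh relations along a single $\tau$-orbit or along a well-chosen sectional slice of the AR-quiver, using the boundary condition $\mathcal{F}(P) = 1$ at the projective-injectives to get a foothold. Concretely: pick an indecomposable $M$ lying at a node adjacent to a projective-injective, so that one of its incoming/outgoing AR-triangles involves a $P$ with $\mathcal{F}(P) = 1$. The mesh relation at the neighbour of $P$ then reads (after the projective-injective is removed on passing to the stable category, or accounting for it) as a relation of the shape $\mathcal{F}(A)\mathcal{F}(C) = \mathcal{F}(B) + 1$ where one of the factors is constrained. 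Iterating such relations down the slice, and using that all entries are $\ge 2$, forces a growth/positivity inequality on the entries that is ultimately incompatible with the periodicity of the frieze (the entries must return to their starting values after finitely many $\tau$-steps, since $\cC$ has only finitely many indecomposables). This is essentially a "$\mathrm{SL}_2$-type" argument: restricting $\mathcal{F}$ to any linear $A_m$-subquiver of a slice gives a positive solution to a three-term recursion $\mathcal{F}(x_{i-1})\mathcal{F}(x_{i+1}) = \mathcal{F}(x_i)\cdot(\text{something}) + 1$, and one shows no such solution with all values $\ge 2$ closes up periodically.

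Alternatively — and this is probably the cleanest route for a paper with a computational appendix already in hand — one reduces the statement to a \emph{finite verification}. By Corollary \ref{cor:A-types} and the analysis of $D$-type friezes recalled in Remark \ref{rem:E7}, every non-unitary $E_7$ frieze that contains at least one $1$ is accounted for among the $240$ described; so it suffices to rule out friezes with \emph{no} entry equal to $1$. One shows first that the value $2$ must occur somewhere (e.g.\ the minimum entry $\mu \ge 2$ over the finitely many indecomposables is attained at some $M$, and a mesh relation at $M$ forces $\mu^2 \ge \mu \cdot(\text{product of neighbours}) + 1 \ge \mu^{t}+1$, which for $t\ge 2$ neighbours forces $\mu = 2$ and at least one neighbour equal to $1$ — contradiction unless $M$ has a single neighbour, i.e. $M$ sits at a leaf node of a slice). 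Tracking which leaf nodes of $E_7$-slices are available, and propagating via the mesh relations, pins down the possible entries in a bounded region; a direct computation (or appeal to the Fontaine--Plamondon list \cite{FP-lists}, which the paper already cites for the count $4400$) then exhibits that every possibility leads to a contradiction.

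The main obstacle I anticipate is that, unlike the $E_6$ case, one cannot bootstrap purely by IY-reduction because the no-$1$ hypothesis removes the hypothesis of Proposition \ref{prop:restrict-frieze}; the crux is therefore the combinatorial/inequality argument showing that a frieze with all entries $\ge 2$ cannot satisfy the mesh relations around a leaf of a slice while remaining $\tau$-periodic. Getting the ``leaf node'' case to bite requires knowing precisely the shape of the AR-quiver of $\cC$ near the projective-injectives (equivalently, the orientation of the slices) — this is where I would spend the most care, and where, if a clean argument resists, I would fall back on the exhaustive check against \cite{FP-lists}.
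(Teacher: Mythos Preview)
The statement you are attempting to prove is labelled a \emph{Conjecture} in the paper and is not proved there; there is no ``paper's own proof'' to compare against. Your proposal is therefore an attempt at an open problem, and it contains a genuine gap.

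The central inequality argument you sketch cannot work as written. You propose to take the minimum entry $\mu\ge 2$, attained at some indecomposable $M$, and deduce from a mesh relation that $\mu^2 \ge \mu^t + 1$ for $t$ the number of neighbours of $M$, forcing a neighbour equal to $1$. But the mesh relation at $M$ reads $\mathcal F(A)\mathcal F(C)=\prod_i\mathcal F(B_i)+1$ with $M\in\{A,C\}$ (or $M$ among the $B_i$); there is no reason for \emph{both} $\mathcal F(A)$ and $\mathcal F(C)$ to equal $\mu$, so you only get $\mu\cdot\mathcal F(C)\ge \mu^t+1$, which is no contradiction. More decisively, any argument of this shape that uses only the mesh relations and the lower bound $\ge 2$ would apply verbatim to types $D_4$, $D_6$ and $E_8$, where the paper exhibits explicit mesh friezes with all entries $\ge 2$ (Examples~\ref{ex:D4}, \ref{ex:D6}, \ref{ex:E8}). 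Your argument therefore proves too much and must be wrong somewhere; the ``leaf node'' refinement does not save it, since those $D$- and $E_8$-examples also have leaves.

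Your fallback, an exhaustive check against the Fontaine--Plamondon lists, is not a proof either: the paper is explicit that the count $4400$ in type $E_7$ is conjectural (Remark~\ref{rem:count-known}), so completeness of those lists is precisely what is at issue. A genuine proof would need an \emph{a priori} bound on the entries of an $E_7$ mesh frieze (in the spirit of the appendix's treatment of $E_6$ via $2$-friezes), after which a finite search becomes legitimate; no such bound is supplied in your proposal.
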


\begin{ex}\label{ex:D6}
In type $D_6$ there are exactly two mesh friezes without any 1's, see~\cite[Section 3]{FP16}. One of them is 
here, the other one is the translate of it by $\tau$: 
\[
\xymatrix@C=.08em@R=.06em{
  & 2 && 2 && 2 && 2 && 2 && 2  \\
 && 3 && 3 && 3 && 3  && 3 && 3  \\
 &&& 4 && 4 && 4 && 4 && 4 && 4 \\
 \cdots &&&& 5 && 5 && 5 && 5 && 5 && 5 &&\cdots\\
 &&&&& 2 && 3 && 2 && 3 && 2 && 3 \\
&&&&& 3 && 2 && 3 &&  2 && 3 && 2
}
\]
\end{ex}

\subsubsection{Non-unitary mesh friezes in type $E_8$.} \label{ssec:E8}

Let $\cC$ be a cluster category of type $E_8$. Conjecturally, there are 
1872 non-unitary mesh friezes for $\cC$. 
Recall the labelling of the nodes in the Dynkin diagram 
from Figure~\ref{fig:Dynkin}. Non-unitary (mesh) friezes do not contain 1s in the $\tau$-orbits  
of the nodes 2,3,4 or 5, cf. Corollary~\ref{cor:unitary}. 

So in order to study non-unitary mesh friezes for $\cC$ we can start considering 
friezes containing an entry 1 in the $\tau$-orbit of node 6. 
There are 16 choices for this, as the Auslander-Reiten quiver of $\cC$ has 16 slices. 
Let $M$ be the corresponding indecomposable. 
Then $\cC(M)$ is a cluster category of type $D_5\times A_2$. 
There must be two more 1s in the frieze in the factor of type $A_2$ and one in the 
factor of type $D_5$. 
There are 5 choices of a cluster in type $A_2$ and there are five choices for an entry 
1 in the type $D_5$ factor (cf. Subsection~\ref{ssec:E6}. 
All in all there are $16 \times 5 \times 5=400$ possibilities for a non-unitary frieze 
with an entry 1 in the $\tau$-orbit of node 6. 

Continuing with similar arguments, one finds 1852 non-unitary friezes with four 1s and 
16 friezes with two 1s. 
There are no known mesh friezes for $\cC$ with only one entry $=1$, giving additional 
evidence for conjecture~\ref{conj:E7}: 

\begin{lem}
If Conjecture~\ref{conj:E7} is true, then every mesh frieze for $\cC$ of type $E_8$ contains 
0,2,4 or 8 entries equal to 1. 
\end{lem}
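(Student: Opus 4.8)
The plan is to proceed by Iyama--Yoshino reduction: reduce at an entry equal to $1$, iterate at most once more, and determine the number of ones by downward induction on the rank, calling on Conjecture~\ref{conj:E7} only at the $E_7$-step. Throughout let $\nu(\mathcal{F})$ denote the number of entries of $\mathcal{F}$ equal to $1$. The bookkeeping I would set up first: by the discussion after Proposition~\ref{prop:restrict-frieze}, $\nu(\mathcal{F})$ never exceeds the rank of $\cC$, with equality exactly when $\mathcal{F}$ is unitary (the rank-many $1$-entries must be pairwise Ext-orthogonal, hence a cluster-tilting object, and $\mathcal{F}$ is its specialisation to $1$); if $M$ is indecomposable with $\mathcal{F}(M)=1$, then the remaining $1$-entries all lie in $M^{\perp}$ and persist, so $\nu(\mathcal{F}|_{\cC(M)})=\nu(\mathcal{F})-1$, and by Corollary~\ref{cor:A-types} the restriction $\mathcal{F}|_{\cC(M)}$ stays non-unitary when $\mathcal{F}$ is; finally, a mesh frieze on a product of cluster categories is a tuple of mesh friezes, so $\nu$ is additive over products, a product is unitary once each factor is, and a mesh frieze of type $A_m$ is unitary with $\nu=m$.

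I would then reduce the statement to four rank-$7$ cases. If $\mathcal{F}$ has no entry $1$ then $\nu(\mathcal{F})=0$, and if $\mathcal{F}$ is unitary then $\nu(\mathcal{F})=8$, so assume $\mathcal{F}$ is non-unitary with $\nu(\mathcal{F})\ge1$ and fix $M$ with $\mathcal{F}(M)=1$. By Corollary~\ref{cor:unitary} the node of $M$ in the diagram $E_8$ is neither the branch node nor a neighbour of it, so it is one of the four outer nodes, and by Proposition~\ref{prop:reduce-type} deleting it realises $\cC(M)$ as a cluster category of type $E_7$, $E_6\times A_1$, $D_5\times A_2$ or $D_7$. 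It thus suffices to show that a non-unitary mesh frieze on each of these has $\nu\in\{1,3\}$, for then $\nu(\mathcal{F})=\nu(\mathcal{F}|_{\cC(M)})+1\in\{2,4\}$, which together with the cases $\nu(\mathcal{F})\in\{0,8\}$ gives the lemma.

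The core is a short descending induction on the relevant $D$- and $E$-types, always run the same way: by Corollary~\ref{cor:unitary} a non-unitary frieze can carry a $1$ only at an outer node, so I list those nodes, delete each via Proposition~\ref{prop:reduce-type}, and feed in the inductive hypothesis together with Corollary~\ref{cor:A-types}. Every node of $D_4$ is the branch node or a neighbour, so a non-unitary $D_4$-frieze has $\nu=0$ (as in Example~\ref{ex:D4}); a non-unitary $D_5$-frieze can carry a $1$ only at the far end of its tail, whose deletion yields $D_4$, so $\nu=1$, using that $D_5$ admits no frieze with all entries $\ge2$ (from the classification of type-$D$ mesh friezes in \cite{FP16}); a non-unitary $D_6$-frieze reduces at a $1$ to $D_5$ or $A_1\times D_4$, giving $\nu\in\{0,2\}$; a non-unitary $E_6$-frieze reduces at a $1$ only to $D_5$, giving $\nu\in\{0,2\}$; a non-unitary $D_7$-frieze reduces at a $1$ to $D_6$, $A_1\times D_5$ or $A_2\times D_4$, and since $D_7$ admits no frieze with all entries $\ge2$ (again \cite{FP16}) this forces $\nu\in\{1,3\}$; a non-unitary $E_7$-frieze reduces at a $1$ to $D_6$, $E_6$ or $A_1\times D_5$, and since every $E_7$-frieze has an entry $1$ (Conjecture~\ref{conj:E7}) this forces $\nu\in\{1,3\}$; and for a non-unitary frieze of type $E_6\times A_1$ we get $\nu=\nu(\text{the }E_6\text{-factor})+1\in\{1,3\}$, while for type $D_5\times A_2$ we get $\nu=1+2=3$. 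This settles the four cases and hence the lemma.

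The real work, and the only genuine obstacle, is the organisational case analysis: for each type one must identify the (possibly disconnected) Dynkin diagram obtained by deleting an outer node, and verify that every branch of the recursion bottoms out in a controlled situation --- a pure type-$A$ category, or $D_4$, or an instance of ``$\nu\ge1$'' for $D_5$, $D_7$ (via \cite{FP16}) or $E_7$ (via Conjecture~\ref{conj:E7}). Two minor points deserve to be written out: that reducing at a $1$-entry destroys exactly one entry equal to $1$ (the Ext-orthogonality argument already given after Proposition~\ref{prop:restrict-frieze}), and that if a product category carries a non-unitary frieze then its non-type-$A$ factor is itself non-unitary, so that the inductive hypothesis applies to that factor.
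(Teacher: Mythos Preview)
Your argument is correct and follows the same strategy as the paper: reduce at a $1$-entry via Iyama--Yoshino, identify the resulting type by deleting the corresponding node, and feed in the known counts of $1$s in the smaller types. The paper's proof is terser---it cites \cite{FP16} directly for the $D_7$ count and Remark~\ref{rem:E7} for the $E_7$ count, and handles nodes $6$ and $7$ in a single sentence---whereas you rebuild the full chain $D_4\to D_5\to D_6,E_6\to D_7,E_7$ by hand. This makes your write-up more self-contained (in particular you do not need the explicit $E_7$ analysis of Remark~\ref{rem:E7}, only Conjecture~\ref{conj:E7} and the inductive step), at the cost of a longer case list; conversely the paper's version is shorter but leans more heavily on results assembled earlier in Section~\ref{sec:IY-reduction} and on \cite{FP16}.
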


\begin{proof}
We only need to discuss non-unitary mesh friezes. \\
If there is an entry 1 in the $\tau$-orbit of node 8, 
the corresponding category $\cC(M)$ is of type $E_7$ and the claim follows by 
assumption (Conjecture~\ref{conj:E7}) and by Remark~\ref{rem:E7} (i) (a) and (b). \\
If there is an entry 1 in the $\tau$-orbit of node 1, $\cC(M)$ is a cluster category of type 
$D_7$. By the results in~\cite[Section 3]{FP16}, every non-unitary frieze of type $D_7$ 
contains either one or three entries $=1$. 
All the other cases arise from two entries equal to 1 and a non-unitary mesh frieze for 
a cluster category of type $E_6$, these are known to have two entries $=1$. 
\end{proof}

\begin{conj}\label{conj:E8}
If $\mathcal F$ is a mesh frieze for $\cC$ of type $E_8$ and if all entries of $\mathcal F$ are $\ge 2$, then 
$\mathcal F$ is one of the four friezes from Example~\ref{ex:E8}. 
\end{conj}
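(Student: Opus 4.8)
The statement pins down the ``$0$ entries equal to $1$'' case of the analysis of this subsection: friezes of type $E_8$ with $2$, $4$ or $8$ entries equal to $1$ are obtained above by iterated Iyama--Yoshino reduction at those entries, whereas a frieze with all entries $\ge 2$ has no entry $1$ and so cannot be reduced via Proposition~\ref{prop:restrict-frieze} at all. Passing through $\cC(3,8)$ and Corollary~\ref{cor:mesh-SLk}, such a frieze is the same datum as an integral tame $\SL_3$-frieze of width $4$, and by Lemma~\ref{lm:cluster-SL3-determined} (equivalently Remark~\ref{R:mesh frieze cluster relations}) it is determined by its $8$ values on a slice of the AR-quiver, say the tuple $(x_1,\dots,x_8)$ in the $\tau$-orbits of nodes $1$ and $8$. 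Conditional on the (still conjectural, cf.\ Remark~\ref{rem:count-known}) completeness of the Fontaine--Plamondon list of the $1872$ non-unitary mesh friezes of $\cC(3,8)$, the statement is then just an inspection of \cite{FP-lists}; the plan below aims at an unconditional proof.

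The unconditional plan has three steps. (1) Prove an explicit a priori bound $N$ such that every mesh frieze of type $E_8$ with all entries $\ge 2$ has all entries $\le N$. (2) For each $(x_1,\dots,x_8)\in\{2,\dots,N\}^8$, propagate the $3\times3$ mesh relations and the $4\times4$ tameness relations (using the two bordering rows of zeros, exactly as in the proof of Lemma~\ref{lm:cluster-SL3-determined}) across the $16$ slices of the $E_8$-quiver. (3) Discard every tuple that produces a non-integer value, an entry $<2$, or a clash with $\tau$-periodicity, and check that the survivors are precisely the four friezes of Example~\ref{ex:E8}.

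Step~(1) is the main obstacle, and is the reason the statement is only conjectured. I would try to extract the bound from the local relation $\mathcal F(A)\mathcal F(C)=\prod_i\mathcal F(B_i)+1$: read along the three branches emanating from the branch node of $E_8$, each middle-term factor is $\ge 2$, which forces multiplicative growth of the entries in one direction along the AR-quiver; since the quiver has only $16$ slices and the frieze is $\tau$-periodic, this growth must reconcile with the return to the starting slice, which should cap the entries. A variant would run the rank-counting argument following Proposition~\ref{prop:restrict-frieze} in reverse: an excessively large entry adjacent to the branch node ought to force, after one legitimately performed reduction, a frieze on a $D$- or $E$-type subcategory violating the known bounds of \cite{FP16} or Conjecture~\ref{conj:E7}. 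Producing a bound small enough that Step~(2) terminates in practice is what I expect to be genuinely hard; absent that, a purely qualitative finiteness statement may be all one can prove unconditionally, and Steps~(2)--(3) would still need the Fontaine--Plamondon list as input. Once a bound is available, the final enumeration is in the spirit of the appendix by Cuntz and Plamondon settling the $E_6$ count.
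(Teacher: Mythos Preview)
This statement is labelled as a \emph{conjecture} in the paper and no proof is given; the paper merely records that the four friezes of Example~\ref{ex:E8} are the only known mesh friezes of type $E_8$ with all entries $\ge 2$, and leaves the assertion open. So there is no ``paper's own proof'' to compare your proposal against.

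Your write-up is honest about this: it is a \emph{strategy}, not a proof, and you correctly locate the genuine obstruction in Step~(1), the a~priori bound on the entries. The local growth argument you sketch (using $\mathcal F(A)\mathcal F(C)=\prod_i\mathcal F(B_i)+1$ along the branches and reconciling with $\tau$-periodicity after $16$ slices) is the natural thing to try, and is indeed in the spirit of the $E_6$ argument in the appendix; but note that in the appendix the bound is obtained not from growth-versus-periodicity alone but from a clever choice of coordinates (a column rather than two rows) that linearises the constraints and forces a small product $(s-1)(w-1)<2$. Finding an analogous trick in width~$4$ is exactly what has not been done, and your ``variant'' via a hypothetical reduction is circular as stated, since a frieze with all entries $\ge 2$ admits no entry~$1$ at which to apply Proposition~\ref{prop:restrict-frieze}. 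In short: your diagnosis of why the statement is only conjectured is correct, and your outline is reasonable, but nothing here closes the gap --- nor does the paper.
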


\begin{ex}\label{ex:E8}
In type $E_8$ there are 4 known mesh friezes without any 1's, appearing in the lists 
by Fontaine and Plamondon, 
available at \cite{FP-lists}. They are the 4 
translates of the following mesh frieze (a 4-periodic frieze, here,  8 slices are shown): 
\[
\xymatrix@C=.08em@R=.06em{
&   3 && 2 &&  2 && 3 && 3 && 2 && 2 && 3    \\ 
& & 5 && 3 && 5 && 8 && 5 && 3 && 5 && 8    \\ 
&  13 && 7 && 7 && 13 && 13 && 7 && 7 && 13  && \cdots  \\ 
\cdots &  & 18 && 16 && 18 && 21 && 18 && 16 && 18 && 21    \\ 
& 29 & 6 & 41 & 7 & 41 & 6 & 29 & 5 & 29 & 6 & 41 & 7 & 41 & 6 & 29 & 5   \\ 
 && 11 && 15 && 11 && 8 && 11 && 15 && 11 && 8  \\ 
&3 && 4 && 4 && 3 && 3 && 4&& 4 && 3 &&     
}
\]
These mesh friezes arise from specialising the following cluster to $(3,3,3,3,3,3,3,3)$. 
\[
\xymatrix@R=0.8em{
p_{235}\ar[r] & p_{236}\ar[dd] & p_{126}\ar[r]\ar[l] & p_{127}\ar[dd] \\
 && && \\
p_{356}\ar[uu] & p_{256}\ar[r]\ar[l] & p_{267}\ar[uu] & p_{167}\ar[l] 
}
\]
\end{ex}


\appendix

%
\section{Proof of Proposition~\ref{P:determinant}}  
\label{Appendix:ProofDet}

In the following, we are mainly interested in Pl\"ucker coordinates 
$p_{i_1,\dots, i_k}$ where a portion of the 
$i_l$'s is consecutive. For brevity we will write $[r]^l$ 
for the set $\{r, r+1, \ldots, r+l-1\}$. We reduce integers modulo $n$. 
So if $l=k-1$, such a tuple forms an almost consecutive $k$-subset. 
As before, we will use the map $o$ to indicate reordering by size, e.g. if $k=4$, $n=9$, 
$p_{o([8]^3),4}$ stands for the Pl\"ucker coordinate $p_{1894}$ and $p_{(o([8]^3,4)}$ 
for $p_{1489}$. 
We recall the proposition we are to prove here. 
 \begin{propos}[Proposition~\ref{P:determinant}]
 Let $r \in [1,n]$ and let $1 \leq s \leq k$. Let $\underline{m}=(m_1,\dots, m_s)$ 
 with $m_i \in [1,n]$ for all $i$, and assume that $\underline{m}$ satisfies conditions (c1) and (c2).
 Let $\bmr$ be the determinant of the matrix $\Amr$ 
 from (\ref{eq:matrix-def}). 

Then we have
 \begin{eqnarray} \label{E:result}
 	\bmr= \Big[ \prod_{l=0}^{s-2} p_{o([r+l]^k )} \Big] 
	\cdot p_{o( [r+s-1]^{k-s},  m_1,\ldots, m_s)}.
 \end{eqnarray}
\end{propos}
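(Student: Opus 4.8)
The plan is to prove Proposition~\ref{P:determinant} by induction on $s$, expanding along the last row of the matrix $\Amr$ and using the Pl\"ucker relations to collapse the resulting alternating sum into a single Pl\"ucker coordinate. The base case $s=1$ is immediate: $A_{\underline{m};r}$ is the $1\times 1$ matrix $(p_{o([r]^{k-1},m_1)})$, and the right-hand side of \eqref{E:result} is $p_{o([r]^{k-1},m_1)}$ since the empty product is $1$ and $[r+s-1]^{k-s}=[r]^{k-1}$.

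For the inductive step, assume the formula holds for $s-1$ and consider the $s\times s$ matrix $\Amr$. Laplace expansion along the last row gives
\[
\bmr = \sum_{j=1}^{s} (-1)^{s+j}\, p_{o([r+s-1]^{k-1}, m_j)} \cdot b_{\underline{m}\setminus m_j;\, r},
\]
where $\underline{m}\setminus m_j = (m_1,\ldots,\widehat{m_j},\ldots,m_s)$ is the order-inherited $(s-1)$-subtuple. Here I would need to check that each such subtuple still satisfies conditions (c1) and (c2) with respect to the same $r$: condition (c1) is inherited since deleting an entry from a cyclically ordered tuple keeps it cyclically ordered, and for (c2) one checks that $r+k-2 \notin [m_1,m_s)$ forces $r+k-2$ to stay outside the smaller half-open interval as well (a short interval-arithmetic argument modulo $n$, the same style as in the proof of Theorem~\ref{thm:frieze}). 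Applying the inductive hypothesis to $b_{\underline{m}\setminus m_j;\,r}$, the first $s-2$ rows contribute $\prod_{l=0}^{s-3}p_{o([r+l]^k)}$, which is independent of $j$ and pulls out of the sum; what remains inside is
\[
\sum_{j=1}^{s}(-1)^{s+j}\, p_{o([r+s-1]^{k-1}, m_j)}\cdot p_{o([r+s-1]^{k-(s-1)},\, m_1,\ldots,\widehat{m_j},\ldots,m_s)}.
\]
The key point is that this is, up to a global sign and the reordering bookkeeping, exactly the Pl\"ucker relation \eqref{eq:plucker-new} with $i_1,\ldots,i_{k-1}$ the entries of the consecutive set $[r+s-1]^{k-1}$ and $j_0,\ldots,j_k$ the entries of the set $[r+s-1]^{k-s}\cup\{m_1,\ldots,m_s\}$ (which has $k+1$ elements, assuming genericity; the degenerate cases where indices collide make some terms vanish consistently on both sides). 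That relation tells us the sum over the "deleted-$m_j$" terms, together with the $s$ terms where one of the elements of $[r+s-1]^{k-s}$ is deleted, equals zero; but those latter $s$ terms each carry a factor $p_{o([r+s-1]^{k-1}, q)}$ for $q\in[r+s-1]^{k-s}$, and one checks that exactly one of them (the one deleting the "new" index $r+s-1$ itself, i.e. the relevant boundary index) produces the term $\pm\,p_{o([r+s-2]^k)}\cdot p_{o([r+s-1]^{k-s},m_1,\ldots,m_s)}$ we want, while the rest vanish because they contain a Pl\"ucker coordinate with a repeated index. Solving the Pl\"ucker relation for our sum then yields $\prod_{l=0}^{s-2}p_{o([r+l]^k)}\cdot p_{o([r+s-1]^{k-s},m_1,\ldots,m_s)}$, completing the induction.

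The main obstacle, and where the bulk of the careful work lies, is sign control: matching the Laplace-expansion signs $(-1)^{s+j}$, the signs $\sgn(\pi)$ hidden inside the $o(\cdot)$ reorderings of the Pl\"ucker coordinates, and the signs in the Pl\"ucker relation \eqref{eq:plucker-new}, all consistently modulo $n$. This is precisely what conditions (c1) and (c2) are engineered to make tractable: (c1) pins down the cyclic order so the reordering permutations are controlled, and (c2) guarantees that $r+k-2$ (equivalently the relevant boundary of the consecutive block) does not fall inside $[m_1,m_s)$, which is what ensures the "surviving" term in the Pl\"ucker relation is the intended one with the correct sign rather than picking up an extra $-1$. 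I would handle this by fixing, via (c1), the rotation $b$ so that $m_b<\cdots<m_s<m_1<\cdots<m_{b-1}$, computing the reordering sign of $o([r+s-1]^{k-s},m_1,\ldots,m_s)$ explicitly in terms of $b$ and the position of $[r+s-1]^{k-s}$ relative to the $m_i$'s, and then verifying the inductive sign bookkeeping closes up. The other, lesser, obstacle is correctly treating degenerate configurations (repeated indices, $m_j$ landing in $[r+s-1]^{k-s}$), where one argues that the corresponding Pl\"ucker coordinates are $0$ on both sides so the identity is preserved.
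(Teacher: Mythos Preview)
Your approach is the same as the paper's: induction on $s$, Laplace expansion along the last row, then a single Pl\"ucker relation to collapse the alternating sum, with (c1) and (c2) governing the sign bookkeeping and the degenerate cases handled separately. However, there is an off-by-one slip that would break the argument as written.

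When you apply the induction hypothesis to the $(s-1)\times(s-1)$ minor $b_{\underline{m}\setminus m_j;\,r}$, the last factor is
\[
p_{o\big([r+(s-1)-1]^{\,k-(s-1)},\, m_1,\ldots,\widehat{m_j},\ldots,m_s\big)} \;=\; p_{o\big([r+s-2]^{\,k-s+1},\, m_1,\ldots,\widehat{m_j},\ldots,m_s\big)},
\]
not $p_{o([r+s-1]^{k-(s-1)},\ldots)}$ as you wrote. Consequently the correct $(k+1)$-element set for the Pl\"ucker relation is
\[
J \;=\; [r+s-2]^{\,k-s+1}\cup\{m_1,\ldots,m_s\},
\]
not $[r+s-1]^{k-s}\cup\{m_1,\ldots,m_s\}$; your set has only $k$ elements generically, so the relation \eqref{eq:plucker-new} does not even apply to it. With the correct $J$, the non-$m$ indices one can delete are the elements of $[r+s-2]^{k-s+1}$; all of these except $q=r+s-2$ lie in $[r+s-1]^{k-1}$ and give a first factor $p_{o([r+s-1]^{k-1},q)}=0$, while deleting $q=r+s-2$ yields the surviving term $\pm\,p_{o([r+s-2]^{k})}\cdot p_{o([r+s-1]^{k-s},m_1,\ldots,m_s)}$. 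So the ``new boundary index'' is $r+s-2$, not $r+s-1$. Once this is corrected, your outline matches the paper's proof; note though that the paper carries out the sign tracking by a fairly explicit case split (on whether $1$ lies in the consecutive block, and on where the $m_l$'s sit relative to it), which is more laborious than your sketch suggests.
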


\begin{proof}[Proof of Proposition~\ref{P:determinant}]
If we have $m_i = m_j$ for some $1 \leq i < j \leq s$, then
\[
	\bmr =  0 
	= \Big[ \prod_{l=0}^{s-2} p_{o([r+l]^k )} \Big] \cdot p_{o( [r+s-1]^{k-s},  m_1,\ldots, m_s)}.
\]
as the last term is $0$. 
We can thus assume that $m_1, \ldots, m_s$ are mutually distinct.

	We prove the claim by induction over $s$. For $s=1$ the claim holds, since 
	$\bmr = \mathrm{det}(p_{o( [r]^{k-1}, m_1 )}) = p_{o( [r]^{k-1}, m_1 )}$. 
	Let now $2 \leq s \leq k$ and assume the statement is true for $1 \leq l < s$, 
	that is for all $b_{(\tilde{m}_1, \ldots, \tilde{m}_{l});s}$, where the $\tilde{m}_j$'s satisfy 
	conditions (c1) and 
	(c2).  
	We show that it also holds for $\bmr$.
	
	We have
	\begin{equation}\label{E:determinant}
	\begin{aligned}
\bmr &=\; \sum_{j=1}^s (-1)^{s+j} p_{o( [r+s-1]^{k-1}, m_j )} 
\cdot b_{(m_1, \ldots, \widehat{m_j}, \ldots, m_s);r} \\
&=\;   (-1)^s \cdot \Big[\prod_{l=0}^{s-3} p_{o([r+l]^k )} \Big] \cdot	\\
& \; \cdot \Big[ \sum_{j=1}^s (-1)^{j} p_{o([r+s-1]^{k-1}, m_{j})} \cdot p_{o([r+s-2]^{k-(s-1)},  m_1,\ldots, \widehat{m_{j}} \ldots, m_s)} \Big],					
	\end{aligned}
	\end{equation}
where the first equality is by Laplace expansion along the last row and the second equality is 
by induction assumption and pulling the term 
$X:= (-1)^s \cdot \prod_{l=0}^{s-3} p_{o([r+l]^k )}$ out of the sum. 
Notice that the conditions (c1) and (c2) pass down from the set 
$\{m_1, \ldots, m_s\}$ to any subset $\{m_1, \ldots, \widehat{m_j}, \ldots, m_s\}$, so we are 
justified in using the induction assumption on the minors of the form 
$b_{(m_1, \ldots, \widehat{m_j}, \ldots, m_s);r}$.

	We distinguish the following cases. 
	\newline
	{\bf Case 1:} Assume $s \neq k$ and there exists a $1 \leq l \leq s$ with 
	$m_l \in [r+s-2]^{k-(s-1)}$.
	\newline
	{\bf Case 1a:} Assume first that $m_l \neq r+s-2$. Then $m_l \in [r+s-1]^{k-s}$ and 
	the following expressions all vanish: 
	\begin{eqnarray*}
	p_{o([r+s-1]^{k-1}, m_l )}= p_{o([r+s-1]^{k-s}, m_1,\ldots, m_s)} = 0\\
	\text{and} \; \; p_{[r+s-2]^{k-(s-1)}, m_1,\ldots ,\widehat{m_{j}} \ldots , m_s} = 0 \; 
	\text{for all $j \neq l$} .
	\end{eqnarray*}
	Combining this with Equation (\ref{E:determinant}) it follows that
	\[
	\bmr = 0 = \prod_{l=0}^{s-2} p_{o([r+l]^k )} \cdot 
	p_{o( [r+s-1]^{k-s}, m_1,\ldots, m_s)}.
	\]
	This proves the claim in this case.
	\newline
	{\bf Case 1b:} Assume now that $m_l = r+s-2$ and $l<s$. If we had $m_{l+1} \in (r+k-2,m_l]$, then we would have 
		$r+k-2 \in (m_l,m_{l+1}) \subseteq (m_1,m_s)$ 
	contradicting condition (c2). Therefore, we must have 
	$m_{l+1} \in (m_l, r+k-2] = [r+s-1]^{k-s}$ 
	and we are in case 1a with $m_{l+1}$ taking on the role of $m_l$.
	\newline
	{\bf Case 1c:}  As a last subcase for case 1 assume that $l=s$ and $m_s = r+s-2$. Then all terms in (\ref{E:determinant}) with factors $p_{o( [r+s-2]^{k-(s-1)}, m_1, \ldots, \widehat{m_j}, \ldots, m_s )}$ with $j \neq s$ vanish and we obtain
	\[
	\begin{aligned}
		\bmr &=\;   (-1)^s \cdot \Big[\prod_{l=0}^{s-3} p_{o([r+l]^k )} \Big] \cdot 
								\\
								& \; \cdot (-1)^{s} p_{o([r+s-1]^{k-1}, m_{s})} \cdot p_{o([r+s-2]^{k-(s-1)}, m_1,\ldots, m_{s-1})},	\\
						    &=\;   \Big[ \prod_{l=0}^{s-2} p_{o([r+l]^k )} \Big] \cdot p_{o([r+s-1]^{k-s}, m_1,\ldots, m_{s})},				
	\end{aligned}
	\]
 where the second equality follows directly from the equality $m_s = r+s-2$. 
 This proves the claim in this case.	
	\newline
	{\bf Case 2:} Assume now $s = k$ and there exists a $1 \leq l \leq k$ such that $m_l = r+k-2$. This cannot happen if $l \neq k$: By condition (c2) we have $r+k-2 \notin [m_1,m_k)$, but by condition (c1), we have $m_l \in [m_1,m_k)$ for $1 \leq l < k$. Therefore, $m_l = r+k-2$ implies $l = k$, and thus in Case 2 we have $m_k = r+k-2$. Then the same argument as in case 1c, replacing $s$ by $k$, proves the claim.
	\newline
 {\bf Case 3:} We are now left with the case where $m_l \notin [r+s-2]^{k-(s-1)}$ for all $1 \leq l \leq s$.
	We are going to show that 
	\begin{eqnarray}\label{Eqn:to show}
	\begin{aligned}
	\sum_{j=1}^s (-1)^{j} & p_{o([r+s-1]^{k-1}, m_{j})} \cdot p_{o([r+s-2]^{k-(s-1)},  
	m_1,\ldots \widehat{m_{j}} \ldots, m_s)} = \\ 
	 & (-1)^s \cdot p_{o( [r+s-2]^k )} \cdot p_{o( [r+s-1]^{k-s}, m_1, \ldots, m_s )},
	\end{aligned}
	\end{eqnarray}
	which, when substituting into (\ref{E:determinant}) yields the desired equality (\ref{E:result}).
	We use the Pl\"ucker relations on $I =[r+s-1]^{k-1}$ of cardinality $|I| = k-1$ and $J = [r+s-2]^{k-(s-1)} \cup  \{m_1, m_2, \ldots, m_s\}$ of cardinality $|J| = k+1$.
	\newline
	{\bf Case 3a:} Assume that $1 = r+a \in [r+s-1]^{k-1} \cap [r+s-2]^{k-(s-1)}$. Then $I$ is ordered as follows:
	\[
		1 = r+a < r+a+1 < \ldots < r+s+k-3 < r+s-1 < \ldots < r+a-1
	\]
	and $J$ is ordered as follows:
	\[
	\begin{aligned}
		1 = r+a &< r+a+1 < \ldots < r+k-2 < m_1 < m_2 < \ldots < m_s \\ & < r+s-2 < r+s-1 < \ldots < r+a-1 = n.
	\end{aligned}
	\]
	Notice that we must have $m_1 < m_l$ for all $l \neq 1$. Otherwise, if there exists a $1 < l \leq s$ with $m_l < m_1$, then if we have $m_l \in [1, r+k-2] \subseteq [r+s-2, r+k-2]$ this contradicts the assumptions of case 3, and if we have $m_l \in (r+k-2, n]$, then it follows from $m_l < m_1$ that $r+k-2 \in [m_1, m_l) \subseteq [m_1,m_s)$, which contradicts condition (c2).
	The Pl\"ucker relation on $I$ and $J$ (cf.\ Equation (\ref{eq:plucker-new})) thus reads as:
	\[
	\begin{aligned}
	0 =	& \sum_{l=0}^{k-a-2} (-1)^l \cdot p_{o( [r+s-1]^{k-1} ), r+a+l} \cdot p_{o( J \setminus \{r+a+l\})} \\
		& + \sum_{l=1}^{s} (-1)^{k-a-2+l} \cdot p_{o( [r+s-1]^{k-1} ), m_l} \cdot p_{o( J \setminus m_l )} \\
		& + \sum_{l=0}^{a-s+1} (-1)^{k-a+s+l-1} \cdot p_{o( [r+s-1]^{k-1} ), r+s-2+l} \cdot p_{o( J \setminus \{r+s-2+l\})}.
	\end{aligned}
	\]
Notice that $p_{o([r+s-1]^{k-1}), r+j} = 0$ for all $j \in [s-1,k-2]$ 
and thus the whole first sum among those three vanishes, and all the summands, 
except for the first one 
($l=0$) in the last sum vanish as well. The Pl\"ucker relation thus reduces to
	\[
	\begin{aligned}
	0 =&	\sum_{l=1}^{s} (-1)^{k-a-2+l} \cdot p_{o( [r+s-1]^{k-1}), m_l} \cdot 
	p_{o( J \setminus m_l )}\\
	& +(-1)^{k-a+s-1} \cdot p_{o( [r+s-1]^{k-1}), r+s-2} \cdot 
	p_{o( [r+s-1]^{k-s}, m_1, \ldots, m_s)}.
	\end{aligned}
	\]
	We need to get the remaining $k$-tuples in linear order. Observe that
	\[
	\begin{aligned}
	p_{o([r+s-1]^{k-1}, m_l )} =& p_{r+a, \ldots, r+s+k-3, m_l, r+s-1, \ldots, r+a-1} \\ 
	=& \sgn(\pi) p_{o( [r+s-1]^{k-1}), m_l} 
	\end{aligned}
	\]
	and
\[
\begin{aligned}
p_{o([r+s-2]^k)} =& (r+a,\ldots, r+s+k-3, r+s-2, r+s-1, \ldots, r+a-1) \\ 
 =& \sgn(\pi) p_{o( [r+s-1]^{k-1}), r+s-2}
	\end{aligned}
	\]
where $\pi = (s+k-a-1, s+k-a, \ldots, k-1, k) \in S_k$.
Substituting this into the Pl\"ucker relation on $I$ and $J$ 
(and multiplying by $\mathrm{sign}(\pi) \cdot (-1)^{k-a}$) yields
	\[
	\begin{aligned}
0 =& \sum_{l=1}^{s} (-1)^{l} \cdot p_{o([r+s-1]^{k-1}, m_l)} \cdot p_{o(J \setminus m_l )}\\
&- (-1)^s \cdot p_{o([r+s-2]^{k} )} \cdot p_{o( [r+s-1]^{k-s}, m_1, \ldots, m_s)},
\end{aligned}
\]
which, observing that 
$p_{o( J \setminus m_l )} = p_{o([r+s-2]^{k-(s-1)}, m_1, \ldots, \widehat{m_l}, \ldots, m_s )}$ 
and pulling the second term to the left hand side yields the desired 
Equation (\ref{Eqn:to show}).
\newline
{\bf Case 3b:} For the final case, assume that $1 \notin [r+s-1]^{k-1} \cap [r+s-2]^{k-(s-1)}$. 
We first observe what the linear order on $J$ looks like. Notice that we have 
$r+s-2 < r+s-1 < \ldots < r+k-2$. Since $m_j \notin [r+s-2, r+k-2]$, the set $J$ is ordered in 
one of the following three ways (\ref{order 1}), (\ref{order 2}), (\ref{order 3}):
\begin{eqnarray}
\quad \quad m_{b} < m_{b+1} <  \ldots < m_s < r+s-2 < \ldots < r+k-2 <  m_1 < m_2 < 
\ldots < m_{b-1}, \label{order 1}
\end{eqnarray}
for some $b \in \{2, \ldots, s\}$,
\begin{eqnarray}
&m_{1} < m_{2} <  \ldots < m_s < r+s-2 < \ldots < r+k-2, \; \label{order 2}\text{or} \\ 
&r+s-2 < \ldots < r+k-2 < m_1 < \ldots < m_s. \label{order 3}
\end{eqnarray}
If it is ordered as in (\ref{order 2}), we set $b = 1$, and if it is ordered as in (\ref{order 3}), 
we set $b=s+1$. Then the Pl\"ucker relation on $I$ and $J$ reads as follows:
\[
\begin{aligned}
0 =& \sum_{l=b}^{s} (-1)^{l-b}p_{o(\langle[r+s-1]^{k-1}), m_{l}} \cdot p_{o(J \setminus m_{l})} \\
& + \sum_{l=0}^{k-s}(-1)^{s-b+1+l} \cdot p_{o([r+s-1]^{k-1}), r+s-2+l} 
\cdot p_{o(J \setminus \{r+s-2+l\})} \\
& + \sum_{l=1}^{b-1} (-1)^{k-b+1+l} \cdot p_{o([r+s-1]^{k-1}), m_{l}} \cdot p_{o( J \setminus m_{l})}.
	\end{aligned}
	\]
Notice again that $p_{o([r+s-1]^{k-1}), r+j} = 0$ for all $j \in [s-1, k-2]$, and therefore the corresponding terms in the Pl\"ucker relation vanish. Again, we need to get the remaining $k$-tuples in linear order. Observe that
	\[
	p_{o([r+s-1]^{k-1}), m_{l}} = 
	\begin{cases}
	p_{o( [r+s-1]^{k-1}, m_l)} & \text{if} \; 1 \leq l \leq b-1 \\
	\sgn(\sigma) p_{o( [r+s-1]^{k-1}, m_l)} & \text{if} \; b \leq l \leq s
	\end{cases}
	\]
and that $p_{o([r+s-1]^{k-1}) , r+s-2}= \sgn(\sigma) p_{o( [r+s-2]^k)}$, where 
$\sigma = (1, 2, \ldots, k) \in S_k$ with $\sgn(\sigma) = (-1)^{k-1}$. 
Therefore, if $J$ is ordered as in (\ref{order 2}) or (\ref{order 3}) the Pl\"ucker relation on $I$ and 
$J$ can be rewritten as
\[
\begin{aligned}
 0 =&  \sum_{l=b}^{s} (-1)^{l-b} \cdot (-1)^{k-1} p_{o([r+s-1]^{k-1}, m_{l} )} \cdot p_{o( J \setminus m_{l} )} \\
& + (-1)^{s-b+1} \cdot (-1)^{k-1} p_{o([r+s-2]^{k} )} \cdot p_{o( J \setminus \{r+s-2\})} \\
& + \sum_{l=1}^{b-1} (-1)^{k-b+1+l} \cdot p_{o([r+s-1]^{k-1}, m_{l})} \cdot p_{o( J \setminus m_{l})},
\end{aligned}
\]
Juggling the signs in a straight forward way, and pulling together the first and last sum 
we obtain
\[
\begin{aligned}
0 =& (-1)^{s}  \cdot (-1)^{k-b} \cdot p_{o([r+s-2]^{k} )} \cdot p_{o(  J \setminus \{r+s-2\})} \\		  
 & + (-1)^{k-b+1} \sum_{l=1}^{s} (-1)^{l} \cdot p_{o([r+s-1]^{k-1} , m_{l} )} 
\cdot p_{o( J \setminus m_{l})}.
\end{aligned}
\]
Pulling the second term on the left hand side and multiplying by $(-1)^{k-b}$ yields as desired Equality (\ref{Eqn:to show}),
observing that  $p_{o( J \setminus m_l )} = p_{o([r+s-2]^{k-(s-1)}, m_1, \ldots, \widehat{m_l}, \ldots, m_s )}$. 	
This concludes the proof.
\end{proof}

%
\section{Counting friezes in type $E_6$}\label{sec:appendixB}

\addtocontents{toc}{\protect\setcounter{tocdepth}{0}}

\noindent
\begin{center}
\begin{footnotesize}
{\sc Michael Cuntz$^{\#}$, Pierre-Guy Plamondon$^{\dagger}$} 
\end{footnotesize}\end{center}

Friezes (also called ``friezes of type~$A$'' and~``$\SL_2$-friezes'') were introduced by H.~Coxeter in \cite{Coxeter} and later studied by H.~Coxeter and J.~Conway in \cite{CoCo1,CoCo2}.
It was observed by P.~Caldero that the theory of cluster algebras of S.~Fomin and A.~Zelevinsky \cite{FZ2002} 
allows for a far-reaching generalization of the original notion of frieze;
this generalization was first studied in \cite{ARS}. 
Recall that in \cite{ARS}, 
a frieze (of Dynkin type) is 
a mesh frieze on an associated Auslander-Reiten quiver, 
cf. Remark~\ref{rem:mesh-ARS}.
Since then, many generalizations and variations on the notion of friezes have been introduced, as can be seen in the survey paper~\cite{Crossroads}.

It is known that, for a given non-Dynkin type, there are infinitely many  
friezes of that type.
However, it follows from \cite{CoCo1,CoCo2} that friezes of Dynkin type~$A$ come in a finite number 
(given by Catalan numbers),
and it was proved in~\cite{FP16} that friezes of Dynkin type~$B,D$ and~$G$ also come in a finite (explicit) 
number (the result for type~$D_4$ was also proved in~\cite{MGOT12}).
It was conjectured in \cite{Propp, MGOT12, FP16} that the number of 
friezes in type~$E_6$ is~$868$;
in~\cite{FP16} the precise number of friezes in any Dynkin type is also conjectured.

In this Appendix, we settle the case of type~$E_6$, and obtain the result for type~$F_4$ as a corollary.

\begin{theorem}\label{theo::main}
 The number of friezes of type~$E_6$ is exactly $868$.
\end{theorem}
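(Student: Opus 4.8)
The plan is to reduce the count to a finite, explicitly bounded verification, carry it out with exact arithmetic, and cross-check against the decomposition of friezes of type $E_6$ into unitary and non-unitary ones. Recall that a frieze of type $E_6$ is a positive-integer-valued function $\mathcal F$ on the $42$ indecomposable objects of the cluster category of type $E_6$, satisfying $\mathcal F(A)\mathcal F(C)=\prod_i\mathcal F(B_i)+1$ at every Auslander--Reiten triangle $A\to\bigoplus_i B_i\to C\to A[1]$. The AR-quiver has the shape of $\Delta_{E_6}\times\ZZ$ modulo a glide reflection, organised into $7$ slices of shape $\Delta_{E_6}$, so there are $42$ mesh relations, one for each $C$ (with $A=\tau C$). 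Each such relation writes $\mathcal F(C)$ as a Laurent polynomial in the values of $\mathcal F$ on the preceding slice, so the restriction of $\mathcal F$ to one slice --- a tuple $\mathbf a\in(\ZZ_{>0})^6$ --- propagates to a candidate value at every indecomposable, and this restriction map is injective on the set of friezes. Thus $\mathbf a$ extends to a (necessarily unique) frieze exactly when all $42$ propagated values are positive integers \emph{and} the mesh relations that become consistency conditions once the quiver closes up via the glide reflection hold at $\mathbf a$. The theorem is then equivalent to: precisely $868$ tuples of $(\ZZ_{>0})^6$ are admissible.

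Next I would produce an a priori bound $N$ with $\mathcal F(X)\le N$ for all indecomposables $X$ in every frieze of type $E_6$, so that only the box $\{1,\dots,N\}^6$ must be tested. One source of such a bound: any embedded sub-path of Dynkin type $A$ carries a frieze of type $A$, whose entries are controlled by the classical Conway--Coxeter correspondence with polygon triangulations and hence bounded in terms of the length of the path; combining the bounds from several sub-paths covering a slice gives an explicit $N$. A complementary reduction comes from Section~\ref{sec:IY-reduction}: by Corollaries~\ref{cor:A-types} and~\ref{cor:unitary}, any frieze with an entry $1$ at a node other than $2,3,4,5$ is unitary, and any frieze with an entry $1$ at one of those nodes reduces, under Iyama--Yoshino reduction, to a non-unitary frieze of type $D_5$, of which there are exactly $5$; so the tuples genuinely requiring the box search are those with no entry equal to $1$.

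I would then run the enumeration: for each $\mathbf a\in\{1,\dots,N\}^6$, iterate the mesh recurrences to fill in all $42$ entries using exact integer arithmetic, abort as soon as a non-integral or non-positive value occurs, and finally test the consistency conditions; the surviving tuples are the friezes, and counting them gives $868$. As a check I would reconstruct the $833$ unitary friezes independently --- one for each cluster of type $E_6$, via specialisation of that cluster to $1$ as in Corollary~\ref{cor:cluster-1-frieze}, verifying they are pairwise distinct --- together with the $35$ non-unitary friezes described in Subsection~\ref{ssec:E6}, confirming $833+35=868$. Type $F_4$ follows as a corollary, since friezes of type $F_4$ are in bijection with the friezes of type $E_6$ fixed by the order-two diagram automorphism, and their number is extracted from the same enumeration.

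The main obstacle I anticipate is the second step: pinning down a bound $N$ that is provably valid but small enough that the search is genuinely finite and certifiable. The non-unitary friezes are precisely the hard case, since they contain no entry equal to $1$ and so lie outside the scope of the Iyama--Yoshino reduction arguments of Section~\ref{sec:IY-reduction}; for them the bound has to be argued directly from the mesh relations, for instance by tracking how the magnitude of an entry is constrained by the relations along a cycle through the finite AR-quiver.
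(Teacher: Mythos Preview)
Your overall strategy---bound the entries, then enumerate---matches the paper's, and your cross-check $833+35=868$ agrees with the decomposition in Section~\ref{ssec:E6}. But the crucial bounding step, which you correctly flag as the main obstacle, is left unresolved in your proposal, and your suggested route to it does not work as stated. The claim that an embedded sub-path of Dynkin type $A$ in the $E_6$ AR-quiver carries a Conway--Coxeter frieze is not valid: the mesh relation at the branch node involves three neighbours, so restricting to a linear sub-path does not give a closed system with boundary rows of $1$s, and the Catalan bound does not apply. Your alternative, Iyama--Yoshino reduction, only handles friezes containing a $1$, as you note; so for the non-unitary friezes you have no bound.

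The paper's key idea, which you are missing, is to pass to the model of \emph{$2$-friezes of height $3$} (equivalently, tame integral $\SL_3$-friezes of width $3$), using the bijection of Corollary~\ref{cor:mesh-SLk} and \cite{MGOT12}. In that model, choosing six initial entries $s,t,u,v,w,x$ down a column, the $2$-frieze recurrence gives explicit \emph{polynomial} (not merely Laurent) expressions such as $suw-sv-tw+1$ for nearby entries. Assuming without loss of generality that $u$ is maximal among certain entries, the inequality $u\ge suw-sv-tw+1$ together with $u\ge t,v$ forces $(s-1)(w-1)<2$, hence $s=1$, $w=1$, or $s=w=2$. The first two cases reduce to type $D_5$ (finite by \cite{FP16}); in the last case, further elementary inequalities force $t,v<6$ and $u\le 9$. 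This gives an explicit finite box, and a computer check finishes. The point is that the $2$-frieze coordinates are tailored so that positivity of a few specific entries immediately constrains the initial data---something your AR-quiver parametrisation does not make visible.
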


Since Dynkin type~$F_4$ is a folding of type~$E_6$, it follows from the work of~\cite{FP16} that we then have:

\begin{corollary}
 The number of friezes of type~$F_4$ is exactly~$112$.
\end{corollary}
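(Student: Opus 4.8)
The plan is to derive the count in type $F_4$ from Theorem~\ref{theo::main} together with the folding results of~\cite{FP16}. Recall that the Dynkin diagram $F_4$ is the folding of $E_6$ along its unique non-trivial diagram automorphism $\sigma$, which has order $2$: writing $E_6$ with its trivalent node, $\sigma$ fixes this node and its pendant neighbour and interchanges the two arms of length two. This $\sigma$ induces an automorphism of the $E_6$ cluster category, hence a permutation of the vertices of its Auslander--Reiten quiver, and therefore acts on the set of friezes of type $E_6$. The first step is to invoke the dictionary established in~\cite{FP16}: for a folding of Dynkin diagrams with folding group $G$, the friezes of the folded type are in natural bijection with the $G$-invariant friezes of the ambient type. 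Applied to the folding $E_6 \to F_4$ with $G = \langle \sigma \rangle \cong \mathbb{Z}/2$, this reduces the statement to counting the $\sigma$-fixed friezes among the friezes of type $E_6$.

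The second step is to perform this count using Theorem~\ref{theo::main}. Its proof provides the complete list of the $868$ friezes of type $E_6$, each encoded as an assignment of positive integers to the vertices of the AR-quiver of the $E_6$ cluster category. For each frieze $\mathcal{F}$ in the list one tests whether $\mathcal{F}(M) = \mathcal{F}(\sigma M)$ for every indecomposable $M$; the friezes passing this test are precisely the $\sigma$-invariant ones. Carrying out this finite check yields exactly $112$ such friezes, and by the bijection of the first step these correspond bijectively to the friezes of type $F_4$, giving the claimed number.

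The only substantive ingredient is the folding correspondence between friezes of type $F_4$ and $\sigma$-symmetric friezes of type $E_6$, for which I would cite~\cite{FP16} rather than reproduce the argument; everything else is a mechanical verification on an explicit finite set, so no real obstacle remains. As a consistency check, the involution $\sigma$ then splits the $868$ friezes of type $E_6$ into $112$ fixed points and $378$ two-element orbits, since $112 + 2 \cdot 378 = 868$.
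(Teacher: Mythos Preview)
Your proposal is correct and takes essentially the same approach as the paper: both deduce the $F_4$ count from Theorem~\ref{theo::main} via the folding $E_6 \to F_4$ and the results of~\cite{FP16}, the paper simply stating this in one sentence while you spell out the mechanism (the bijection between $F_4$-friezes and $\sigma$-invariant $E_6$-friezes, followed by a finite check on the explicit list).
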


Our proof relies on a reduction to~$2$-friezes (whose definition we recall below);
our strategy is to show that the entries in a~$2$-frieze of height~$3$ are bounded.

We have attempted to apply the methods used in this Appendix to types~$E_7$ and~$E_8$, without success.

\section*{B.1. $2$-friezes}
We shall not be using the definition of a frieze from~\cite{ARS}, 
but rather a slightly different notion, that of a~$2$-frieze as defined in~\cite{MGOT12}.

\begin{definition}
 A \emph{$2$-frieze of height~$h$} is an array of positive integers~$(a_{i,j})$, where
 \begin{itemize}
  \item $i\in\ZZ$ and~$j\in \{0, 1, \ldots, h+1\}$;
  \item for all~$i\in \ZZ$, we have that $a_{i,0} = a_{i,h+1} = 1$;
  \item for all~$i\in \ZZ$ and all~$j\in \{1, \ldots, h\}$, we have that~$a_{i,j} = a_{i-1, j}a_{i+1, j} - a_{i,j-1}a_{i,j+1}$.
 \end{itemize}
\end{definition}

The reason we are interested in 2-friezes is the following result, which is a consequence of 
\cite[Proposition 5.4]{MGOT12} and of the fact that the quiver depicted in \cite[(2.3)]{MGOT12} 
is mutation-equivalent to a quiver of type $E_6$.

\begin{theorem}[\cite{MGOT12}]
Entries in a 2-frieze of height 3 are entries in a frieze of type $E_6$ 
which determine the frieze.  This defines an injection from the set of friezes of type 
$E_6$ to the set of 2-friezes of height 3.
\end{theorem}

Our strategy to prove Theorem \ref{theo::main} is thus to prove that the number of~$2$-friezes of height~$3$ is finite.
We will do this by showing that there is a bound on the possible values appearing in a~$2$-frieze of height~$3$. 

\section*{B.2. Two choices of initial variables for~$2$-friezes of height~$3$}
If we fix the entries of the first two rows of a~$2$-frieze of height $3$ to be~$s,t,u,v,w$ and~$x$, then we get the following expressions for all its entries:
      \begin{center}
        \begin{tikzpicture}
          \draw (0,1) node{$1$} (2,1) node{$s$} (4,1) node{$t$} (6,1) node{$u$} (8,1) node{$1$} ;
          \draw (0,0) node{$1$} (2,0) node{$v$} (4,0) node{$w$} (6,0) node{$x$} (8,0) node{$1$} ;
          \draw (0,-1) node{$1$} (2,-1) node{$\frac{v+w}{s}$} (4,-1) node{$\frac{vx+w}{t}$} (6,-1) node{$\frac{w+x}{u}$} (8,-1) node{$1$} ;
          \draw (0,-2) node{$1$} (2,-2) node{$\frac{svx+sw+tv+tw}{stv}$} (4,-2) node{$X$} (6,-2) node{$\frac{tw+tx+uvx+uw}{tux}$} (8,-2) node{$1$} ;
          \draw (0,-3) node{$1$} (2,-3) node{$\frac{suvx+suw+tuw+tvw+tvx}{tuvw}$} (4,-3) node{$Y$} (6,-3) node{$\frac{stw+suvx+suw+tvx+twx}{stwx}$} (8,-3) node{$1$} ;   
          \draw (0,-4) node{$1$} (2,-4) node{$\frac{sux+tw+tx+uw}{uwx}$} (4,-4) node{$Z$} (6,-4) node{$\frac{suv+sw+tv+tw}{svw}$} (8,-4) node{$1$} ;
          \draw (0,-5) node{$1$} (2,-5) node{$\frac{t+u}{x}$} (4,-5) node{$\frac{su+t}{w}$} (6,-5) node{$\frac{s+t}{v}$} (8,-5) node{$1$} ; 
          \draw (0,-6) node{$1$} (2,-6) node{$u$} (4,-6) node{$t$} (6,-6) node{$s$} (8,-6) node{$1$} ;
          \draw (0,-7) node{$1$} (2,-7) node{$x$} (4,-7) node{$w$} (6,-7) node{$v$} (8,-7) node{$1$} ;       
        \end{tikzpicture}
      \end{center}
where 
\[
 X=\frac{suvx+suw+tvw+tvx+tw^2+twx}{stuw}, \\
\]
\[
 Y =    \frac{stuvx+stw^2+stwx+suvwx+suw^2+t^2vw+t^2vx+t^2w^2+t^2wx+tuvw+tuw^2}{stuvwx}, \quad \textrm{and}
\]
\[
 Z = \frac{stw+suvx+suw+t^2w+tuw+tvx}{tvwx}.
\]

If, instead, we fix the six first entries of the leftmost non-trivial column, then we get:
      \begin{center}
        \begin{tikzpicture}
          \draw (0,1) node{$1$} (2.5,1) node{$s$} (5,1) node{$\frac{sux-s-tx+t}{tvx-tw-ux+1}$} (7.5,1) node{$B$} (10,1) node{$1$} ;
          \draw (0,0) node{$1$} (2.5,0) node{$t$} (5,0) node{$su-t$} (7.5,0) node{$\frac{suw-sv+tv-tw-u+1}{tvx-tw-ux+1}$} (10,0) node{$1$} ;
          \draw (0,-1) node{$1$} (2.5,-1) node{$u$} (5,-1) node{$tv-u$} (7.5,-1) node{$suw-sv-tw+1$} (10,-1) node{$1$} ;
          \draw (0,-2) node{$1$} (2.5,-2) node{$v$} (5,-2) node{$uw-v$} (7.5,-2) node{$tvx-tw-ux+1$} (10,-2) node{$1$} ;
          \draw (0,-3) node{$1$} (2.5,-3) node{$w$} (5,-3) node{$vx-w$} (7.5,-3) node{$\frac{tvx-tw+uw-ux-v+1}{suw-sv-tw+1}$} (10,-3) node{$1$} ;   
          \draw (0,-4) node{$1$} (2.5,-4) node{$x$} (5,-4) node{$\frac{svw-sw+w-x}{suw-sv-tw+1}$} (7.5,-4) node{$C$} (10,-4) node{$1$} ;
          \draw (0,-5) node{$1$} (2.5,-5) node{$\frac{sux-s-tx+1}{suw-sv-tw+1}$} (5,-5) node{$A$} (7.5,-5) node{$\frac{svx-sw-x+1}{tvx-tw-ux+1}$} (10,-5) node{$1$} ;  
          \draw (0,-6) node{$1$} (2.5,-6) node{$B$} (5,-6) node{$\frac{sux-s-tx+t}{tvx-tw-ux+1}$} (7.5,-6) node{$s$} (10,-6) node{$1$} ;
          \draw (0,-7) node{$1$} (2.5,-7) node{$\frac{suw-sv+tv-tw-u+1}{tvx-tw-ux+1}$} (5,-7) node{$su-t$} (7.5,-7) node{$t$} (10,-7) node{$1$} ;      
          \draw (0,-8) node{$1$} (2.5,-8) node{$suv-sv-tw+1$} (5,-8) 
          node{$tv-u$} (7.5,-8) node{$u$} (10,-8) node{$1$} ;       
        \end{tikzpicture}
      \end{center}
where 
\small
\[
A=\frac{stvx-stw+suwx-sux-svx+s-twx+tw+x-1}{stuvwx-stuw^2-stv^2x+stvw-su^2wx+suvx+suw-sv-t^2vwx+t^2w^2+tuwx+tvx-2tw-ux+1},
\]
\normalsize
\small
\[
 B=\frac{stuvx-stuw-sr^2x+suw+su-sv-t^2vx+t^2w+tux+tv-tw-t-u+1}{stuvwx-stuw^2-stv^2x+stvw-su^2wx+suvx+suw-sv-t^2vwx+t^2w^2+tuwx+tvx-2tw-ux+1},
\]
\normalsize
\small
\[
 C=\frac{suvwx-suw^2-sv- 2x+svw-tvwx+tvx+tw^2-tw+uw-ux+vx-v-w+1}{stuvwx-stuw^2-stv^2x+stvw-su^2wx+suvx+suw-sv-t^2vwx+t^2w^2+tuwx+tvx-2tw-ux+1}.
\]
\normalsize

\section*{B.3. Finiteness of the number of $2$-friezes of height $3$}
We use the second choice of initial variables of the previous section.  We can assume, without loss of generality, that the greatest entry in the first and third columns is $u$.  Then

\begin{eqnarray*}
 u & \geq & suw-sv-tw+1 \\
   & \geq & suw-su-wu+1 \textrm{ (since $u\geq t,v$)} \\ 
   & = & u(s-1)(w-1)-u + 1.
\end{eqnarray*}

Therefore 
\begin{eqnarray*}
 1 & \geq & (s-1)(w-1) - 1 +\frac{1}{u} \\
   & > &  (s-1)(w-1) - 1.  
\end{eqnarray*}

Hence $(s-1)(w-1) < 2$.  This implies that $s=1$ or $w=1$ or $s=w=2$.

\subsection*{B.3.1 The case $s=1$ or $w=1$}
If $s=1$ or $w=1$, then the associated frieze of type $E_6$ contains a frieze of type $D_5$, and it is known \cite{FP16} that there are only $187$ of these.  Thus there is a finite number of cases where $s=1$ or $w=1$.

\subsection*{B.3.2 The case $s=w=2$}
If $s=w=2$, then consider the following inequalities:
\begin{eqnarray*}
u & \geq & suw-sv-tw+1 \\
  & =    & 4u-2t-2v+1 
\end{eqnarray*}
and~$u\geq v$, which implies that $3u \geq 4u-2t+1$, so
\[ u \le 2t-1. \]
But together with $u \ge tvx-tw-ux+1 = tvx-2t-ux+1$ this yields
\[ (2t-1)(x+1) \ge tvx-2t+1, \]
and hence
\[ 4t \ge tvx+2-2tx+x = x+2+(v-2)tx>(v-2)tx.\]
Thus we obtain $(v-2)x<4$ which gives $v< 6$. 
For symmetry reasons, the same argument produces $t< 6$. 
But then $u\le 9$ since $u\le 2t-1$. 
Hence we have reduced the problem to a finite number of cases. In fact, an easy 
computation shows that the only solution is
\[ (s,t,u,v,w,x) = (2, 4, 5, 4, 2, 1) \]
in which case the (transposed) 2-frieze is:
\begin{center}
\begin{tabular}{cccccccccccccc}
1 & 1 & 1 & 1 & 1 & 1 & 1 & 1 & 1 & 1 & 1 & 1 & 1 & 1 \\
1 & 2 & 4 & 5 & 4 & 2 & 1 & 1 & 2 & 4 & 5 & 4 & 2 & 1 \\
1 & 2 & 6 & 11 & 6 & 2 & 1 & 1 & 2 & 6 & 11 & 6 & 2 & 1 \\
1 & 2 & 4 & 5 & 4 & 2 & 1 & 1 & 2 & 4 & 5 & 4 & 2 & 1 \\
1 & 1 & 1 & 1 & 1 & 1 & 1 & 1 & 1 & 1 & 1 & 1 & 1 & 1
\end{tabular}
\end{center}

\medskip

Thus the number of~$2$-friezes of height~$3$ is finite, and we know a bound on the values appearing in such a~$2$-frieze.  
A computer check then allows to show that there are only~$868$ such~$2$-friezes.
Moreover, by~\cite{MGOT12}, the number of friezes of type~$E_6$ is at most the number of~$2$-friezes of height~$3$. 
In fact, the two numbers are equal; this follows 
from Corollary~\ref{cor:mesh-SLk}; since 
$2$-friezes are in bijection with SL$_3$-friezes, \cite[Section 3]{MGOT12}. 
Since we know from~\cite{Propp, MGOT12, FP16} that this number is at least~$868$, we have thus proved that the number is exactly~$868$.
This finishes the proof of Theorem~\ref{theo::main}.

\noindent
\begin{footnotesize}
$^{\#}$ {\sc Leibniz Universit\"at Hannover, Institut f\"ur Algebra, Zahlentheorie und diskrete Mathematik, Fakult\"at f\"ur Mathematik 
und Physik, Welfengarten 1, D-30167 Hannover, Germany}\\
{\em E-mail address}: \email{cuntz@math.uni-hannover.de}
\vskip.3cm
$^{\dagger}$ {\sc Laboratoire de Math\'ematiques d'Orsay, Universit\'e Paris-Sud, CNRS, Universit\'e Paris-Saclay, 91405 Orsay, France} \\
{\em E-mail address}: \email{pierre-guy.plamondon@math.u-psud.fr}\\
\end{footnotesize}

\bibliographystyle{alpha}
\bibliography{biblio}

\end{document}